\DeclareMathOperator{\tr}{tr}
\DeclareMathOperator{\Gal}{Gal}
\DeclareMathOperator{\Aut}{Aut}
\DeclareMathOperator{\Spec}{Spec}
\DeclareMathOperator{\Jac}{Jac}
\newcommand{\frob}[1]{\sigma_#1}
\newcommand{\U}[1]{U_#1}
\newcommand{\Z}[1]{Z_#1}
\def\A{{\cO_{K}}}
\def\B{{\cO_{L}}}
\def\F{K}
\def\L{{L}}
\def\RF{{\mathbb F_K}}
\def\RL{{\mathbb F_{L}}}
\def\SS{S}
\def\rhol{\delta}
\newcommand{\rhoid}[1]{\rho_{f,H_#1, H_{1,#1}}} 
\newcommand{\rhotype}[1]{\rho_{f,#1}}
\newcommand{\Uid}[1]{U_{f,H_#1, H_{1,#1}}}
\newcommand{\Utype}[1]{U_{f,#1}}
\def\bbQ{ {\mathbb Q}}
\def\bbZ{ {\mathbb Z}}
\def\bbP{ {\mathbb P}}
\def\bb1{ {\mathbb 1}}
\def\cI{ {\mathcal I} }
\def\cL{ {\mathcal L} }
\def\cM{{\mathcal M}}
\def\cO{ {\mathcal O} }
\def\cP{{\mathcal P}}
\def\cS{{\mathcal S}}
\newcounter{lst}
\newenvironment{lst}{%
\refstepcounter{lst}%
\begin{center}
\begin{minipage}{.9\textwidth}}{%
\end{minipage}%
\makebox[.1\textwidth][r]{(\thelst)}%
\end{center}}
\theoremstyle{plain}
\newtheorem{theorem}{theorem}[section]
\newtheorem{lemma}[theorem]{Lemma}
\newtheorem{conjecture}[theorem]{Conjecture}
\newtheorem{thm}[theorem]{Theorem}
\newtheorem{corollary}[theorem]{Corollary}
\newtheorem*{lemma*}{Lemma}
\theoremstyle{definition}
\newtheorem{definition}[theorem]{Definition}
\newtheorem{hypothesis}[theorem]{Hypothesis}
\newtheorem{example}[theorem]{Example}
\newtheorem{remark}[theorem]{Remark}
\numberwithin{equation}{section}
\begin{document}

\title{A Chebotarev Density Theorem over p-adic Fields}

\author[1]{\fnm{Asvin}\sur{G}\email{gasvinseeker94@gmail.com}}
\author[1]{\fnm{Yifan}\sur{Wei}\email{yifan.wei@wisc.edu}}
\author[1]{\fnm{John}\sur{Yin}\email{yin.1034@osu.edu}}
\affil[1]{\orgdiv{Department of Mathematics}, \orgname{University of Wisconsin-Madison}, \orgaddress{\street{480 Lincoln Dr}, \city{Madison}, \postcode{53706}, \state{WI}, \country{U.S}}}

\abstract{
    We compute the $p$-adic densities of points with a given splitting type along a (generically) finite map, analogous to the classical Chebotarev theorem over number fields and function fields. Under some mild hypotheses, we prove that these densities satisfy a functional equation in the size of the residue field. This functional equation is a direct reflection of Poincar\'e duality in \'etale cohomology. As a consequence, we prove a conjecture of Bhargava, Cremona, Fisher, and Gajovi\'c on factorization densities of p-adic polynomials. 
    
    The key tool is the notion of \emph{admissible pairs} associated to a group, which we use as an invariant of the inertia and decomposition action of a local field on the fibers of the finite map. We compute the splitting densities by M\"obius inverting certain p-adic integrals along the poset of admissible pairs. The conjecture on factorization densities follows immediately for tamely ramified primes from our general results. We reduce the complete conjecture (including the wild primes) to the existence of an explicit ``Tate-type" resolution of the ``resultant locus" over $\Spec \mathbb Z$ and complete the proof of the conjecture by constructing this resolution. 
}

\maketitle

\tableofcontents

\section{Introduction}

Let $h(z) = c_nz^n + c_{n-1}z^{n-1} \dots + c_0$ be a random polynomial having coefficients $ c_i \in \mathbb Z_p$. In this paper, we develop a general method to compute the density of polynomials with a given factorization type over $\mathbb Z_p$, thus answering a conjecture of Bhargava, Cremona, Fisher and Gajović \cite{bhargava2022density}. As we will see, our method is far more general than this special case and can be considered as a Chebotarev-type theorem for finite maps over local fields (as opposed to the classical versions which apply to varieties over finitely generated rings). For concreteness, we first explain the case of a random polynomial.

We parametrize polynomials $h(z)$ as above by $\mathbb Z_p^{n+1}$ with the Haar measure $\mu_{\mathrm{Haar}}$ normalized so that the total measure is $1$. If $h(z)$ is irreducible, define $K = \mathbb Q_p[z]/(h(z))$ and the \emph{factorization type} of $h$ by $\sigma(h) = \{f^e\}$ where $f$ is the size of the residue field of $K$ and $e$ its inertial degree. If $e=1$, we will often omit the superscript and simply write $\{f\}$ instead of $\{f^1\}$. In general, if $h(z)$ is squarefree with a factorization $h = g_1\dots g_r$ into irreducible polynomials over $\mathbb Q_p$, we define its factorization type to be the multiset $\sigma(h) = \{\sigma(g_1),\dots,\sigma(g_r)\}$. 

Fixing such a factorization type $\sigma$, let $U_\sigma(p) \subset \mathbb Z_p^{n+1}$ be the $p$-adic open subset of squarefree polynomials with factorization type $\sigma$ and $\rho(n,\tau;p) = \mu_{\mathrm{Haar}}(U_\tau(p))$. As a few sample examples, \cite{bhargava2022density} computes
\vspace{5 mm}
\begin{center}
\begin{tabular}{ |c||c| } 
\hline
 $\rho(2,(11);p) = \frac{1}{2}$  &$\rho(2,(2);p) = \frac{p^2-p+1}{2(p^2+p+1)}$\\
 $\rho(3,(111);p) = \frac{p^4+2p^2+1}{6(p^4+p^3+p^2+p+1)}$ & $\rho(3,(12);p) = \frac{p^4+1}{2(p^4+p^3+p^2+p+1)}$\\
 \hline
\end{tabular}
\end{center}
\vspace{5 mm}
Based on numerical evidence and some of their results, \cite[Conjecture 1.2]{bhargava2022density} is the following
\begin{conjecture}\label{conj: intro bhargava}
The densities $\rho(n,\sigma;p)$ are rational functions in $p$ and satisfy the following symmetry:
\[\rho(n,\sigma;p^{-1}) = \rho(n,\sigma;p).\]
\end{conjecture}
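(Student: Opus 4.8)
The plan is to realize each density $\rho(n,\sigma;p)$ as a $p$-adic integral over the space of monic (or all) polynomials, stratified by the splitting behaviour of the associated finite $\mathbb{Q}_p$-algebra, and then to deduce the symmetry $p \leftrightarrow p^{-1}$ from Poincar\'e duality in \'etale cohomology, exactly as in the abstract's description of the ``functional equation [as] a direct reflection of Poincar\'e duality.'' Concretely, the space of degree-$n$ polynomials maps to $\mathrm{Sym}^n \mathbb{P}^1$ (or to the configuration space of roots), and the locus $U_\sigma(p)$ is a union of $p$-adic cells cut out by the inertia and decomposition action of $\mathrm{Gal}(\overline{\mathbb{Q}_p}/\mathbb{Q}_p)$ on the fiber. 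The first step is to package this action into the \emph{admissible pairs} invariant alluded to in the abstract: to each splitting type $\sigma$ one attaches a pair consisting of the inertia image and the decomposition image inside $S_n$ (up to conjugacy), together with the residue-field data, and one shows $\rho(n,\sigma;p)$ is a $\mathbb{Z}$-linear combination, via M\"obius inversion over the poset of admissible pairs, of ``raw'' counting integrals $I_H(p)$ attached to subgroups $H \le S_n$.

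The second step is to evaluate each $I_H(p)$ as the $p$-adic volume of the set of polynomials whose splitting field has a prescribed ramification/residue structure; for tamely ramified $p$ (in particular all $p \nmid n!$) this is a direct computation — the tame quotient of inertia is cyclic, the relevant covers of $\mathbb{P}^1$ minus a divisor are explicit Kummer/Artin–Schreier-free covers, and one gets a closed rational function in $p$ by a Weil-type point count on the special fiber of a smooth model. Here the symmetry $I_H(p^{-1}) = \pm p^{-N} I_H(p)$ falls out because the point count is a trace of Frobenius on \'etale cohomology of a smooth projective variety (or an open variety with a nice compactification), and Poincar\'e duality exchanges $H^i$ with $H^{2d-i}$, i.e. the eigenvalue $\alpha$ with $q^d/\alpha$; summing the M\"obius combination preserves this symmetry since the normalizing power $p^{N}$ is the same across the strata contributing to a fixed $\sigma$ (it is the dimension of the ambient space). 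One must be careful that the M\"obius coefficients are $p$-independent integers so the symmetry of the pieces propagates to the sum.

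The third and hardest step is the wildly ramified primes $p \mid n!$, where the tame analysis breaks down: the relevant ``resultant locus'' (the discriminant-type hypersurface governing when two factors collide, or when ramification jumps) is not smooth over $\Spec\mathbb{Z}$, so there is no immediate Poincar\'e-duality statement. The strategy, following the abstract, is to reduce the full conjecture to the existence of an explicit resolution of singularities of the resultant locus over $\Spec\mathbb{Z}$ that is ``Tate-type'' — meaning its cohomology (of the total space and all strata of the exceptional divisor) is generated by Tate classes, so that Frobenius acts by powers of $p$ and the duality-induced symmetry $p \leftrightarrow p^{-1}$ is visible integrally on the nose, uniformly in $p$. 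The main obstacle is constructing this resolution: one needs a sequence of blow-ups (along smooth centers lying over $\Spec\mathbb{Z}$, so the construction is insensitive to $p$) whose exceptional divisors are iterated projective/affine bundles, and then one must verify that the $p$-adic integral is computed by a ``change of variables'' along this resolution compatible with the stratification by admissible pairs. I expect the bulk of the work, and the genuinely new input beyond the tame Chebotarev formalism, to be in exhibiting this resolution explicitly and checking its Tate-type property; once that is in hand, the functional equation for $\rho(n,\sigma;p)$ follows by the same Poincar\'e-duality bookkeeping as in the tame case, and rationality in $p$ is automatic from the explicit rational-function shape of the contributions.
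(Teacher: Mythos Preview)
Your proposal is essentially the paper's approach and correctly identifies every major ingredient: the admissible-pair invariant, M\"obius inversion over the resulting poset, evaluation of the $p$-adic integrals as point counts on strata of a normal-crossings resolution, the symmetry via Poincar\'e duality, and the reduction of the wild case to constructing a Tate-type resolution of the resultant locus over $\Spec\mathbb{Z}$ (which the paper carries out by weighted stacky blow-ups of a \emph{skew-conical} stratification on $\prod_i\mathbb{P}^{n_i}$, followed by destackification).

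One genuine subtlety you elide: the raw integrals attached to an admissible pair $\tau=(H,gH)$ are computed on the \emph{$g$-twisted} quotient ${}^{g}(X/H)$, and individual twisted point counts $|{}^{g}D(\mathbb{F}_q)|$ are \emph{not} palindromic under $p\leftrightarrow p^{-1}$ --- the paper gives an explicit elliptic-curve counterexample. In the general theorem this is repaired only by symmetrizing $\tau$ with $\tau^{-1}$, so your claim that the symmetry of each $I_H$ ``falls out'' from Poincar\'e duality on a smooth projective variety is too optimistic as stated. For the factorization conjecture the paper circumvents this by a second, coarser M\"obius inversion, passing from admissible pairs to the poset of \emph{factorization types}; the integrals then live over the polynomial-multiplication maps $\prod_i\mathbb{P}^{n_i}\to\mathbb{P}^n$, whose closed strata are quotients of $(\mathbb{P}^1)^r$ with Frobenius acting by scalars on cohomology, so the $g$-twist is harmless and one gets honest polynomial point counts for all $q$.
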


In more detail, consider the random variable that sends a polynomial to the number of $\mathbb Q_p$-rational roots. Then, \cite[Theorem 1]{bhargava2022density} shows that all the moments of this random variable are rational functions in $p$ that are invariant under the transformation $p \to p^{-1}$ and they conjecture that in fact, the densities themselves satisfy both properties. Since the number of rational roots is far from determining the factorization type of a polynomial, the conjecture is much stronger than what they prove.

Also in prior work, \cite{caruso2022zeroes} considers $\rho_n(x)$ as a density measuring the expected number of roots of a degree $n$ polynomial near a point $x$ in a p-adic field. He proves that these densities satisfy several striking properties such as a modular transformation law. 

In the case where the splitting fields of the polynomials are tame extensions of $\mathbb{Q}_p$, \cite{del_corso_dvornicich_2000} (and independently, the last author in \cite{john}) prove that the densities $\rho(n,\sigma;p)$ form a rational function in $p$. They compute a recurrence equation for the above densities in terms of certain other densities. The individual terms in this recurrence are rational but do not satisfy the above symmetry and therefore, we need a new idea in order to prove the functional equation (and the rationality at wild primes). More recently (and in fact simultaneously with the current paper), \cite{roy} proves the above conjecture including the symmetry for the extremal factorization type $\{n^1\}$.

In this paper, we prove a generalization of this conjecture formulated for any finite extension of $\bbQ_p$. The case for tame primes, e.g., for $p > n$, follows from our main general results and the previous work on rationality cited above. However, we offer an independent proof that works uniformly for all primes $p$. The rationality in $p$ for factorization densities boils down to the fact that certain varieties $D/\mathbb Z$ related to the problem have point counts $p \to |D(\mathbb F_p)|$ that are polynomial functions of $p$ and is special to the case of polynomial factorizations. The functional equation ultimately reduces to the Weil Conjectures (more specifically, Poincar\'e duality) over finite fields. The key insight that led to this paper was decoupling these two parts of the conjecture. The rationality turns out to be very special but the functional equation, suitably generalized, applies to any generically finite Galois map between smooth, proper varieties over an extension of $\mathbb Z_p$ and offers us an inductive lever to prove all our results.

We thus move from the setting of polynomials to algebraic geometry and leverage p-adic integration to prove our desired results. While our application of p-adic integration is far from novel in this setting, our inductive strategy (\S\ref{section: admissible pairs}) does appear to be novel, as well as the application of p-adic integration to \emph{finite} covers (and in this context, our Lemma \ref{lemma: local integrals} is also a novel result to our knowledge). The final section (\S \ref{sec: resolving resultant locus}) is the most technically complicated part of our paper and involves resolving "resultant" loci using Artin stacks through weighted blowups. We believe that this section is of independent interest and should have manifold applications to other problems.

Before stating our general results, we briefly recall the classical Chebotarev theorem since our results can be considered as a refinement of that theorem for local fields. Over a finite field $\mathbb{F}_q$, let $f: X \to Y$ be an \'etale Galois map between irreducible varieties with constant Galois group $G$ acting simply and transitively on geometric fibers. 

Given a closed point $y \in Y$ with residue field $\kappa(y) = \mathbb{F}_{q^d}$ and a geometric lift $x \in X(\overline{\mathbb F}_q)$, the $q^d$-Frobenius will send $x$ to a geometric point $x'$ in the fiber. By simple-transitivity of the $G$-action on the fiber, there exists a unique element $\sigma_y \in G$, which satisfies $\sigma_y(x) = x'$. The conjugacy class of the element $\sigma_y$ will not depend on the choice of $x$ from the fiber, and by the abuse of notation we will call the conjugacy class the \textit{Frobenius element}, which we will also denote by $\sigma_y$. 

The classical Chebotarev states that the association $y \leadsto \sigma_y$ is uniformly random on $G$:

\begin{thm}[Chebotarev]
Let $c \subset G$ be a conjugacy class. Then,
\[\lim_{N \to \infty}\frac{|\{y \in Y : [\kappa(y):\mathbb F_q] \leq N, \frob{y} = c\}|}{|\{y \in Y : [\kappa(y):\mathbb F_q] \leq N\}|} = \frac{|c|}{|G|}.\]
\end{thm}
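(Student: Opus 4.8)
The plan is to deduce the theorem from the Grothendieck--Lefschetz trace formula together with Deligne's bounds on the weights of compactly supported cohomology, after repackaging the Frobenius conjugacy classes $\frob{z}$ as traces of Frobenius on suitable $\overline{\bbQ}_\ell$-sheaves. (As is implicit in such statements, one must assume here that $X$ is geometrically connected --- equivalently that the image of $\pi_1^{\text{\'et}}(Y_{\overline{\bbF}_q})$ in $G$ is all of $G$ --- and that $\dim Y \ge 1$; otherwise the right-hand side should be the density relative to the geometric Galois group.) For an irreducible $\overline{\bbQ}_\ell$-representation $\rho$ of $G$, viewed as a representation of $\pi_1^{\text{\'et}}(Y)$ through $f$, write $\cF_\rho$ for the associated lisse sheaf on $Y$; for a class function $\phi = \sum_\rho a_\rho\chi_\rho$ on $G$ put $\cF_\phi = \sum_\rho a_\rho\,[\cF_\rho]$ in the Grothendieck group. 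Because the stalk of $\cF_\rho$ at a geometric point over a closed point $z$ of degree $m$, regarded over $\bbF_{q^n}$ with $m \mid n$, carries the Frobenius $\frob{z}^{\,n/m}$, summing the trace formula over $Y(\bbF_{q^n})$ and extending linearly in $\phi$ gives, for every $n \ge 1$,
\[
\sum_{m \mid n} m \sum_{\deg z = m} \phi\bigl(\frob{z}^{\,n/m}\bigr) \;=\; \sum_i (-1)^i \Tr\bigl(\mathrm{Frob}_q^{\,n} \mid H^i_c(Y_{\overline{\bbF}_q},\cF_\phi)\bigr).
\]

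I would then specialize to $\phi = \mathbb 1_c$, the indicator of the class $c$, and use the identity $\mathbb 1_c = \tfrac{|c|}{|G|}\sum_\rho \overline{\chi_\rho(c)}\,\chi_\rho$, so that the coefficient of the trivial representation is exactly $|c|/|G|$ and the sheaf attached to it is the constant sheaf $\overline{\bbQ}_\ell$. The contribution of the trivial representation to the right-hand side is $\tfrac{|c|}{|G|}\,\#Y(\bbF_{q^n}) = \tfrac{|c|}{|G|}\,q^{n\dim Y} + O\bigl(q^{n(\dim Y - 1/2)}\bigr)$. For a nontrivial irreducible $\rho$, the top cohomology $H^{2\dim Y}_c(Y_{\overline{\bbF}_q},\cF_\rho)$ is --- up to a Tate twist, and after restricting to the smooth locus --- the space of $\pi_1^{\text{\'et}}(Y_{\overline{\bbF}_q})$-coinvariants of $\rho$, which vanishes since that group surjects onto $G$; and every lower $H^i_c(Y_{\overline{\bbF}_q},\cF_\rho)$ has all Frobenius eigenvalues of absolute value at most $q^{i/2}\le q^{\dim Y - 1/2}$ by Deligne's weight bounds (one reduces to the constant-sheaf case by trivializing $\cF_\rho$ on a finite \'etale cover). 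As there are finitely many $\rho$ and the Betti numbers are bounded independently of $n$, the right-hand side equals $\tfrac{|c|}{|G|}q^{n\dim Y} + O\bigl(q^{n(\dim Y - 1/2)}\bigr)$.

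On the left-hand side the $m = n$ term is $n\cdot\#\{z : \deg z = n,\ \frob{z} = c\}$, while the terms with $m \mid n$, $m < n$, are bounded in absolute value by $\sum_{m \mid n,\, m < n}\#Y(\bbF_{q^m}) = O\bigl(q^{(n/2)\dim Y}\bigr)$, which is absorbed into the error once $\dim Y \ge 1$. Dividing by $n$, and running the same argument with $c$ replaced by all of $G$ for the denominator, gives
\[
\#\{z : \deg z = n,\ \frob{z} = c\} = \frac{|c|}{|G|}\cdot\frac{q^{n\dim Y}}{n} + O\!\left(\frac{q^{n(\dim Y - 1/2)}}{n}\right), \qquad \#\{z : \deg z = n\} = \frac{q^{n\dim Y}}{n} + O\!\left(\frac{q^{n(\dim Y - 1/2)}}{n}\right).
\]
Summing over $n \le N$, both partial sums are dominated by their top term because of the geometric growth in $n$, while the error sums are of strictly smaller order, so the ratio in the statement converges to $|c|/|G|$. (One can also avoid sheaves altogether and instead apply the Lang--Weil estimates to the twisted covers $X_g$ obtained by composing the Frobenius of $X$ with translation by $g\in G$, which directly count closed points with prescribed Frobenius; the input is then more elementary but the bookkeeping slightly less transparent.)

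The step I expect to carry the real content is the cohomological one: establishing that the Artin sheaves attached to the \emph{nontrivial} irreducible representations contribute nothing in the top degree --- this is precisely where geometric connectedness of $X$ enters, and is what forces the main term to have coefficient exactly $|c|/|G|$ --- and that all remaining cohomology is of strictly smaller weight. Once the \'etale-cohomological inputs (the trace formula and Deligne's bounds) are granted, stripping off the imprimitive terms and the Tauberian summation over degrees are routine.
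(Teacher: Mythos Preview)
The paper does not actually prove this theorem. It is stated in the introduction as the classical Chebotarev density theorem over finite fields, purely as motivation and context for the paper's own $p$-adic analogue; no proof is given or attempted. Later, in Corollary~\ref{cor: q to infty limiting behaviour}, the paper invokes the finite-field Chebotarev theorem as a known input (remarking parenthetically that the argument of Theorem~\ref{thm: incidence identity} could also recover it), but that is a one-line appeal, not a proof.

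Your sketch is a correct and standard proof of the classical result via the Grothendieck--Lefschetz trace formula and Deligne's weight bounds, and there is nothing to compare it against here. If anything, the alternative you mention at the end --- counting points on the twisted covers $X_g$ and applying Lang--Weil --- is closer in spirit to what the paper does in its own setting (Theorem~\ref{thm: incidence identity} and the surrounding discussion work with twists ${}^gX$ and count their rational points), so if you wanted to match the paper's flavour you would emphasize that route rather than the sheaf-theoretic one.
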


\subsection{Our Main Results}

We work in the context of an arbitrary local ring $\A$ over $\mathbb Z_p$ with fraction field $\F$ and residue field $\A/\mathfrak m_{\F} = \mathbb F_q$. Let $f: X \to Y$ be a generically finite, Galois map between smooth proper varieties defined over $\A$ with Galois group $G$, a constant group scheme over $\A$. By this, we mean that there is a Zariski open subset $\U{f} \subset Y$ such that $f: f^{-1}(\U{f}) \to \U{f}$ is an \'etale Galois map with Galois group $G$ and moreover, the natural map $\Aut(f) \to G$ is an isomorphism. 

Let $P \in \U{f}(\F)$ with $Q \in \U{f}(\overline{\F})$ a geometric lift of $P$. By functoriality of the \'etale fundamental group, we have a map
\[\Gal(\overline{\F}/\F) = \pi_1^{\text{\'et}}(P;Q) \to G.\]
We denote the image of the entire Galois group by $D_Q$, the image of the inertia group by $I_Q$ and the image of the canonical Frobenius coset by $\frob{Q}$. The Galois group of a local field is significantly more complicated than that of a finite field and thus, we need a new definition to capture the data of $(I_Q,\frob{Q})$. We call a pair $\tau = (H,gH)$ for a subgroup $H \subset G$ and an element $ g \in G$ an \emph{admissible pair} if $gH = Hg$ (in this general setting, $\tau$ will refer to more general admissible pairs as opposed to factorization types). The pair $(I_Q,\frob{Q})$ as above is an admissible pair and we define 
\[U_{f,\tau}(\F) = \{P \in \U{f}(\F) : (I_Q,\frob{Q}) = \tau \text{ for some } Q \text{ with } f(Q) = P \}.\]
For a smooth projective variety $Y$ as above, one can define a canonical measure $\mu_Y$, see \S\ref{section: canonical measure} for more details. We normalize the canonical measure $\mu_Y$ on $Y(\F)$ so that $\mu_Y(Y(\F)) = |Y(\mathbb F_q)|$. In analogy with the factorization densities conjecture, one might hope that $\rhotype{\tau}(\F) \coloneqq \mu_Y(U_{f,\tau}(\F))$ is a rational function in $q$ as we vary $\A$ over extensions of $\mathbb Z_p$. The simplest examples (such as the degree $2$ map from an elliptic curve $E \to \mathbb P^1$) very quickly disabuse us of this hope or any reasonable modification of it. Indeed, we will show that $\rhotype{\tau}(\F)$ is a linear combination of the point counts $|Z(\mathbb F_q)|$ weighted by $\eta_k(q)$ as $Z$ ranges over smooth proper varieties of dimension at most $\dim Y$ where $\eta_k(q) = (q^{k+1}-q)/(1-q^{k+1})$. The obstruction to rationality is precisely that the point counts $|Z(\mathbb F_q)|$ are generally not rational functions of $q$.

Despite this, the symmetry in Conjecture \ref{conj: intro bhargava} mentioned above continues to hold in this general context when formulated correctly. More precisely, we define the notion of a palindromic form of weight $k$ over $\A$ in Definition \ref{defn: palindromic forms} and prove that the densities are palindromic forms. For the introduction, we will only need that palindromic forms of weight $k$ are functions $\rho: \mathbb N \to \mathbb Q$ of a special form that allow a natural extension to a function $\rho: \mathbb Z \to \mathbb Q$ with the property that
\[\rho(-m) = q^{-km}\rho(m).\]

With this definition, our main theorem is as follows. Note that it is not even obvious a-priori that the densities $\rhotype{\tau}(\L)$ lie in $\mathbb Q$.

\begin{thm}[Theorem \ref{thm: Main theorem, density fixing inertia and frobenius coset}]
Let $f: X \to Y/\Spec\A$ be a generically finite, Galois map with Galois group $G$ between smooth, projective varieties and $\tau = (H,gH)$ an admissible pair as above. We denote its inverse by $\tau^{-1} = (H,g^{-1}H)$. Suppose that, for every $\tau' = (H',g'H')$ with $H' \subset H$ and $g' \in gH$, there exists a $g'$-equivariant resolution $\pi_{H'}: \widetilde{X_{H'}} \to X/H'$ such that the ramified locus in $\widetilde{X_{H'}}$ for the map $\widetilde{X_{H'}} \to Y$ is a simple normal crossings divisor over an unramified extension of $\A$. 

Then, for any finite extension of local rings $\A \subset \B$ with corresponding residue field extensions $\mathbb F_q \subset \mathbb F_{q^m}$, the densities $\rhotype{\tau}(\L) \in \mathbb Q$ and the function
\[ m \to \rhotype{\tau}(\L) + \rhotype{\tau^{-1}}(\L)\]
depends on $\L$ only through the size of the residue field $\mathbb F_{q^m}$ and is a palindromic form (in the variable $m$) of weight $\dim_{\F} Y_{\F}$.
\end{thm}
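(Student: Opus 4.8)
The plan is to reduce the statement to a computation of $p$-adic integrals over the strata indexed by admissible pairs refining $\tau$, and then identify the resulting sums with point-counts on the resolutions $\widetilde{X_{H'}}$ so that Poincaré duality supplies the functional equation. First I would use the admissibility poset: a point $P \in \U{f}(\F)$ with $(I_Q,\frob{Q}) = \tau' = (H',g'H')$ lies in $U_{f,\tau'}(\F)$, and the conditions "$H' \subset H$, $g' \in gH$" describe exactly those $\tau'$ that can appear in the closure stratum cut out by requiring the decomposition data to be at least as ramified/split as $\tau$. Concretely, the locus where $Q$ has decomposition group contained in $\langle H, g\rangle$-type data is (up to the group-theoretic bookkeeping already set up in the paper) the $\F$-points of the quotient $X/H'$ lying over $Y$, and by Möbius inversion along the poset of admissible pairs one writes $\rhotype{\tau}(\F)$ as an alternating sum of the "cumulative" densities $\mu_Y$ of these quotient loci. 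This is the step where I would invoke whatever Möbius-inversion lemma the paper has proved earlier (the abstract advertises exactly this).

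Second, each cumulative density is a $p$-adic integral of the form $\int_{\widetilde{X_{H'}}(\A)} |{\rm Jac}|\, d\mu$ against the canonical measure, where the integrand detects the ramification type via the orders of vanishing along the components of the SNC divisor $\bigcup_i D_i$. Here is where the SNC hypothesis does its work: stratify $\widetilde{X_{H'}}$ by the locally closed sets $D_I^\circ = \bigcap_{i\in I}D_i \setminus \bigcup_{j\notin I}D_j$, and on a formal/étale neighborhood of a point of $D_I^\circ(\mathbb F_{q^m})$ the map looks like $(u_1,\dots,u_d) \mapsto (u_1^{e_1},\dots,u_r^{e_r}, \text{units})$, so the integral over the tube around that point is an explicit elementary product of geometric series in $q^{-m}$ — this is exactly what produces the factors $\eta_k(q^m) = (q^{m(k+1)}-q^m)/(1-q^{m(k+1)})$ mentioned before the theorem. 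Summing over the stratification gives
\[
\text{(cumulative density)} \;=\; \sum_{I} |D_I^\circ(\mathbb F_{q^m})| \cdot (\text{rational function of } q^m),
\]
and since $|D_I^\circ(\mathbb F_{q^m})|$ is, for fixed $I$, a fixed $\mathbb Z$-linear combination of Frobenius eigenvalue power sums on a smooth proper variety $Z$ of dimension $\le \dim Y$, the whole density is manifestly a $\mathbb Q$-linear combination of such point counts weighted by the palindromic weights — in particular it lies in $\mathbb Q$. The "unramified extension of $\A$" clause is needed so that this SNC/strata picture is stable under the base change $\A \subset \B$ and the combinatorics of the $e_i$'s does not change with $m$.

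Third — and this is where the functional equation falls out — one pairs the contribution of $\tau$ with that of $\tau^{-1} = (H,g^{-1}H)$. The key point is that replacing $q^m$ by $q^{-m}$ in the elementary geometric-series factors is, up to an overall power of $q^m$, the same as applying Poincaré duality $H^i \mapsto H^{2\dim Z - i}(\cdot)(\dim Z)$ on each smooth proper $Z = \overline{D_I^\circ}$: concretely $\eta_k(q^{-1}) = q^{-k}\eta_k(q) \cdot(-1)\cdot(\text{something symmetric})$, and the substitution $q\mapsto q^{-1}$ in a point count $\sum_j \alpha_j^m$ of a smooth proper variety returns $q^{-m\dim Z}\sum_j \alpha_j^m$ by the functional equation of the zeta function. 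The reason one must add $\rhotype{\tau}$ and $\rhotype{\tau^{-1}}$ rather than treating $\tau$ alone is that inverting the Frobenius coset $gH \mapsto g^{-1}H$ is precisely the shadow on the inertia/decomposition side of the duality involution on cohomology; individual $\tau$-densities need not be palindromic, but the symmetrized sum is, because the duality swaps the $\tau$-stratum with the $\tau^{-1}$-stratum compatibly with the measure-theoretic weights. Packaging all the $|Z(\mathbb F_{q^m})|$ with their weights $\eta_k$ into the notion of a palindromic form of weight $\dim_\F Y_\F$ (Definition \ref{defn: palindromic forms}) then yields the claim.

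I expect the main obstacle to be the second step done honestly: verifying that the $p$-adic volume of the tube around $D_I^\circ$ is exactly the claimed product of geometric series, with the right exponents, uniformly in the base extension $\B/\A$, and that the leftover "unit part" of the map contributes only an honest point count $|D_I^\circ(\mathbb F_{q^m})|$ and not some thicker fibral correction. This is the place where the $g'$-equivariance of the resolution and the SNC-over-an-unramified-extension hypothesis are genuinely used, and where one has to be careful that the Galois/inertia bookkeeping ($I_Q$, $\frob{Q}$) matches the combinatorial data ($I$, the $e_i$, and which residual Frobenius coset is realized) — a mismatch here would break the pairing with $\tau^{-1}$ in step three. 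The passage from "rational function of $q^m$ times point counts" to "palindromic form" is then bookkeeping, and the functional equation is, as the introduction promises, just Poincaré duality.
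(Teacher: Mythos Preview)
Your overall architecture matches the paper's: M\"obius inversion along the admissible-pair poset expresses $\rho_{f,\tau}$ in terms of cumulative integrals $\eta_{\tau'}$ (Theorem~\ref{thm: incidence identity}), each $\eta_{\tau'}$ is computed via the SNC stratification as a sum of strata point-counts weighted by the $\eta_k$ factors (Theorem~\ref{thm: symmetry for pullback}), and Poincar\'e duality on the strata supplies the functional equation.

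There is, however, a genuine gap: the cumulative integral $\eta_\tau$ is \emph{not} taken over $\widetilde{X_{H'}}(\B)$ but over its \emph{unramified Galois twist} ${}^{g^{-1}}\bigl(\widetilde{X/H}\bigr)(\B)$, formed after base-change to $\B$. The twist is what selects the Frobenius coset: a geometric point $\overline{Q}$ of the twisted quotient is $\B$-rational iff $I_Q \subset H$ \emph{and} $\sigma_Q \in gH$ (this is the content of the proof of Theorem~\ref{thm: incidence identity}). The $\B$-points of the untwisted quotient $X/H$ detect only the inertia condition and carry no information about which Frobenius coset is realized, so your step-one identification of the cumulative locus is incorrect as written. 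The $g'$-equivariance hypothesis on the resolution is needed precisely so that the resolution can be twisted along with the quotient.

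This omission propagates to step three and is the actual reason the $\tau$/$\tau^{-1}$ symmetrization is required. After twisting, the point counts in the SNC formula are $|{}^g D_{\mathcal L}(\mathbb F_{q^m})| = \sum_i (-1)^i \tr(g^{-1}\sigma_q^m \mid H^i)$, and these are \emph{not} palindromic by themselves (Remark~\ref{rmk: individual densities are not palindromic}): Poincar\'e duality pairs a $g$-eigenvalue $\lambda$ with $\lambda^{-1}$, so under $m \mapsto -m$ one gets $q^{-m\dim}\tr(g\,\sigma_q^m)$ rather than $q^{-m\dim}\tr(g^{-1}\sigma_q^m)$. Adding the $\tau^{-1}$-integral supplies exactly the missing $\tr(g\,\sigma_q^m)$ term, so that $\tr\bigl((g+g^{-1})\sigma_q^m\bigr)$ is palindromic (Lemma~\ref{lemma: twisted point counts have the right weight}, Theorem~\ref{thm: twist symmetry under pullback}). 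Your heuristic that ``duality swaps the $\tau$-stratum with the $\tau^{-1}$-stratum'' is not the mechanism; duality pairs $g$-eigenspaces with $g^{-1}$-eigenspaces on the cohomology of the \emph{same} stratum, and the $\tau^{-1}$-twist provides the other half of that pairing.
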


One can consider this as a refinement of the classical Chebotarev density theorem over finite fields as $q \to \infty$ (Corollary \ref{cor: q to infty limiting behaviour}). In this limit, the ramified densities tend to $0$ and the unramified densities tend to the classical limits. Moreover, this general theorem will imply Conjecture \ref{conj: intro bhargava} for all but finitely many primes as we explain later.

Simple examples \footnote{such as the map $\mathbb P^1 \to \mathbb P^1; t \to pt^2$} show that it's necessary to assume the existence of such a resolution. If $f: X \to Y$ is generically finite with Galois group $G$ over (an open subset) of the ring of integers of a number field, then the resolution hypothesis is satisfied for all but finitely many completions of the number ring since we can equivariantly resolve singularities over characteristic $0$ (for instance, see \cite{abramovich1996equivariant},\cite{encinas1998good} and \cite{bierstone1997canonical} among many others) and \emph{spread out} to all but finitely many primes as we show in Theorem \ref{thm: main theorem for almost all primes}.

Moreover, the densities of the $\Utype{\tau}(\L)$ individually are not palindromic forms of any weight. The key here is Lemma \ref{lemma: twisted point counts have the right weight} while Remark \ref{rmk: individual densities are not palindromic} shows that $\rhotype{\tau}$ is not palindromic by itself in general. Similarly, the densities where we require that the splitting field is a fixed extension are also not palindromic (for example, if we require the splitting field of $f$ to be exactly $\bbQ_p(\sqrt p)$). As Caruso explains immediately following \cite[Theorem C]{caruso2022zeroes}, the expected number of roots in a fixed ramified quadratic extension (such as $\bbQ_p(\sqrt p)$ of a random $p$-adic degree $n$ polynomial (for $n \geq 3$) is not invariant under the $p \to p^{-1}$ transformation but this symmetry is regained when we consider instead the expected number of roots in any ramified quadratic extension. 

In the context of a random polynomial over $\mathbb Z_p$, the relevant map is
\[f: X = (\mathbb P^1)^n \to (\mathbb P^1)^n/S_n = \mathbb P^n = Y; ([t_1,s_1],\dots,[t_n,s_n]) \to [a_0, \dots, a_n]\]
where $(s_1z-t_1)\dots(s_nz-t_n) = \sum a_i z^i$. For any local field $K/\mathbb Q_p$ with residue field $\mathbb F_q$, one can define an analogous notion of factorization density $\rho(n,\sigma;q)$. A priori, this might depend on $K$ but we show that it is purely a function of $q$. We prove that it is a rational function satisfying the following functional equation.

\begin{thm}[Theorem \ref{thm: rationality of wild-type densities}]
The factorization densities $\rho(n,\sigma;q)$ form a rational function in $q$ and satisfy the symmetry
\[\rho(n,\sigma;q^{-1}) = \rho(n,\sigma;q).\]
\end{thm}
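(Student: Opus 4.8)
The plan is to deduce both assertions from Theorem~\ref{thm: Main theorem, density fixing inertia and frobenius coset}, applied to the Galois cover
\[ f\colon X=(\bbP^1)^n \longrightarrow (\bbP^1)^n/S_n=\bbP^n=Y \]
introduced above, whose \'etale locus $\U{f}$ is the complement of the discriminant hypersurface $\Delta\subset\bbP^n$. First I would relate the factorization densities to the admissible‑pair densities of $f$. For a squarefree binary form $P\in(\bbP^n\setminus\Delta)(\F)$ and a geometric lift $Q$ of its set of roots, the factorization type $\sigma$ is read off from the admissible pair $\tau=(I_Q,\frob{Q})$ of $S_n$: the $\langle I_Q,\frob{Q}\rangle$‑orbits on $\{1,\dots,n\}$ are the irreducible factors, the common size of the $I_Q$‑orbits inside such an orbit is the ramification index $e$, and the length of the $\frob{Q}$‑cycle permuting those $I_Q$‑orbits is the residue degree $f$. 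Hence the locus of forms of factorization type $\sigma$ decomposes as a disjoint union $\bigsqcup_{\tau\in\cS_\sigma}\Utype{\tau}(\F)$ inside $\U{f}(\F)=Y(\F)\setminus\Delta(\F)$, where $\cS_\sigma$ is the finite set of $S_n$‑conjugacy classes of admissible pairs of combinatorial type $\sigma$; since $\Delta(\F)$ has $\mu_Y$‑measure zero and a permutation has the same cycle type as its inverse, $\cS_\sigma$ is stable under $\tau\mapsto\tau^{-1}$. Comparing the canonical measure $\mu_Y$ with the mass‑one Haar measure on the coefficient space (\S\ref{section: canonical measure}) then yields
\[ \rho(n,\sigma;q)=\frac{1}{|\bbP^n(\bbF_q)|}\sum_{\tau\in\cS_\sigma}\rhotype{\tau}(\F). \]

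For rationality: as recalled before Theorem~\ref{thm: Main theorem, density fixing inertia and frobenius coset}, each $\rhotype{\tau}(\L)$ is a $\bbQ$‑linear combination of products $\eta_k(q^m)\,|Z(\bbF_{q^m})|$, with $\eta_k(t)=(t^{k+1}-t)/(1-t^{k+1})$ and $Z$ ranging over the resolutions $\widetilde{X_{H'}}$ of the quotients $(\bbP^1)^n/H'$, the closed strata of their simple normal crossings ramification divisors over $Y$, and quotients of these by subgroups of $S_n$. For the cover $f$ all such $Z$ are built out of configuration spaces of points on $\bbP^1$ and their quotients by subgroups of symmetric groups, and the resolution constructed later in the paper is of ``Tate type'' precisely in the sense that this persists after reduction modulo every prime; hence $|Z(\bbF_{q^m})|$ is a polynomial in $q^m$. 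Since $\eta_k(q^m)$ is a rational function of $q^m$, each $\rhotype{\tau}(\L)$ is a rational function of $q^m$ — in particular it lies in $\bbQ$ and depends on $\L$ only through $q^m$ — so writing $S(Q)$ for the value of $\sum_{\tau\in\cS_\sigma}\rhotype{\tau}$ at $Q=q^m$, the function $S$ is a rational function of $Q$, and therefore so is $\rho(n,\sigma;q)=\tfrac{Q-1}{Q^{n+1}-1}\,S(Q)$.

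For the functional equation: the Tate‑type resolution just invoked is exactly what the hypothesis of Theorem~\ref{thm: Main theorem, density fixing inertia and frobenius coset} requires — for each relevant $\tau'=(H',g'H')$ it supplies a $g'$‑equivariant resolution of $(\bbP^1)^n/H'$ whose ramified locus over $Y$ is simple normal crossings after an unramified base change — so the Main Theorem applies and gives, for each $\tau\in\cS_\sigma$, that $m\mapsto\rhotype{\tau}(\L)+\rhotype{\tau^{-1}}(\L)$ is a palindromic form of weight $\dim_\F Y_\F=n$. Because $\tau\mapsto\tau^{-1}$ permutes $\cS_\sigma$, we have $2S=\sum_{\tau\in\cS_\sigma}\bigl(\rhotype{\tau}+\rhotype{\tau^{-1}}\bigr)$, a finite sum of palindromic forms of weight $n$; as palindromic forms of a fixed weight form a $\bbQ$‑vector space (Definition~\ref{defn: palindromic forms}), $S$ is a palindromic form of weight $n$, i.e.\ $S(Q^{-1})=Q^{-n}S(Q)$ as rational functions. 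Using the identity $\dfrac{Q^{-1}-1}{Q^{-(n+1)}-1}=Q^{n}\,\dfrac{Q-1}{Q^{n+1}-1}$ we get
\[ \rho(n,\sigma;Q^{-1})=\frac{Q^{-1}-1}{Q^{-(n+1)}-1}\,S(Q^{-1})=Q^{n}\cdot\frac{Q-1}{Q^{n+1}-1}\cdot Q^{-n}S(Q)=\rho(n,\sigma;Q), \]
and specializing $Q=q$ yields the stated symmetry. (Equivalently: dividing a weight‑$n$ palindromic form by $|\bbP^n(\bbF_q)|$ produces a weight‑$0$ palindromic form, that is, one invariant under $m\mapsto-m$.)

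The main obstacle is the input used twice above: the existence, in \emph{every} residue characteristic, of the Tate‑type resolution of the ``resultant locus''. Over characteristic $0$ — hence, by spreading out as in Theorem~\ref{thm: main theorem for almost all primes}, at all tame primes $p>n$ — functorial equivariant resolution of singularities provides it, which already settles the theorem for those $q$. The difficulty is the wild primes $p\le n$: there one must construct by hand, over $\Spec\bbZ$, a resolution of $(\bbP^1)^n/H'$ that works in all characteristics simultaneously, keeps the ramification over $Y$ simple normal crossings over an unramified base, and has exceptional and ramification strata whose point counts are polynomial in the residue field size. Carrying out this explicit resolution of the resultant locus is the technical heart of the argument and is done in the final sections of the paper.
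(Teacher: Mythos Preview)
Your outline has a genuine gap in the rationality step. You assert that each individual $\rhotype{\tau}(\L)$ is a rational function of $q^m$ because the strata $Z$ of the resolutions have polynomial point counts. But the expression for $\rhotype{\tau}$ obtained by M\"obius inversion in the proof of Theorem~\ref{thm: Main theorem, density fixing inertia and frobenius coset} is a linear combination of the integrals $\eta_{\tau'}(\L)$ over \emph{all} admissible pairs $\tau'\leq\tau$, and the formula for $\eta_{\tau'}$ in Theorem~\ref{thm: symmetry for pullback} contains the indicator functions $\cI(\gcd(m,k_i)=\ell_i)$. These arise precisely because twisting $(\bbP^1)^n/H_{\tau'}$ by $g_{\tau'}$ typically makes the exceptional divisors geometrically reducible over $\A$, and they depend on $m$ through congruences, not through $q^m$. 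So even if every $|Z(\bbF_{q^m})|$ were polynomial, $\rhotype{\tau}(\L)$ would only be rational along arithmetic progressions in $m$; the paper says this explicitly at the end of \S\ref{sec: relating factorization densities to integrals} (just before the subsection begins). Your argument does not explain why these non-rational pieces cancel when summed over $\cS_\sigma$.

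The paper avoids this by \emph{not} inverting over the poset of admissible pairs. Using Lemmas~\ref{lemma: adjointness between pairs and types} and~\ref{lemma: alpha invariant under adjointness} it collapses the identity of Theorem~\ref{thm: incidence identity} to the much smaller poset of \emph{factorization types}, obtaining Equation~\eqref{eqn: factorization densities as integrals}
\[
\rho(n,\sigma;q)=\frac{1}{|\bbP^n(\bbF_q)|}\sum_{\sigma'\leq\sigma}\alpha^{-1}(\tau_{\sigma'},\tau_\sigma)\,\eta_{f_{\sigma'}}(K).
\]
The point is that only the special quotients $(\bbP^1)^n/H_{\sigma'}$ with $H_{\sigma'}=\prod_i S_{e_i}^{f_i}$ (i.e.\ products of projective spaces) now appear, and for \emph{these} the resolution of \S\ref{sec: resolving resultant locus} exists over $\Spec\bbZ$ and has the extra feature that, since Frobenius acts by scalars on the cohomology of the untwisted strata, the $g_{\sigma'}$-twisted point counts remain honest polynomials in $q$ (proof of Theorem~\ref{thm: rationality of wild-type densities}). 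Your approach would instead require resolutions of $(\bbP^1)^n/H'$ for \emph{arbitrary} subgroups $H'\subset S_n$ with control over the twisted divisors, which the paper does not construct and which is the harder problem. Your functional-equation paragraph is fine in spirit, but it rests on the same unavailable input.
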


\begin{remark}[A motivic analogue]\label{rmk: motivic duality}

The main formulas are obviously of a "motivic" nature and it is tempting to find a motivic analogue of our results. There are several difficulties in this translation that we are tackling in ongoing work.

\end{remark}

\subsection{Organization of the paper}

In \S \ref{section: preliminaries}, we review some well known theorems that will be useful for us and standardize notations and normalizations. The most important novelty in this section is the notion of a \emph{palindromic form of weight k} (Definition \ref{defn: palindromic forms}). The other novelty is in \S \ref{section: admissible pairs}, where we define the notion of an admissible pair related to a group and a corresponding partial order on it. In \S \ref{section: on the palindromicity}, we prove our main Theorem \ref{thm: Main theorem, density fixing inertia and frobenius coset} and derive some corollaries. This section contains the core arguments of our paper. Finally, in \S \ref{section: factorization types density}, we adapt our general method to prove the conjecture of Bhargava, Cremona, Fisher and Gajović. The main work is involved in showing that we can construct a nice, \emph{Tate}-type resolution of singularities in this case.


\textit{Acknowledgements.} We would like to thank Jordan Ellenberg, Daniel Litt and Andrew O'Desky for general helpful discussions. Moreover, we would like to thank Dan Abramovich, Colin Crowley, Aise Johan de Jong, Connor Simpson, Jason Starr, and Botong Wang for helping with the resolution of the resultant variety in the final section.

\section{Preliminaries}\label{section: preliminaries}

\subsection{Geometry over $p$-adic rings}

Unless mentioned otherwise, we fix a prime $p$ throughout the paper and let $\A$ be a local ring over $\bbZ_p$ with maximal ideal $\mathfrak m_\F$ generated by a uniformizer $\pi_\F$, fraction field $\F$ and residue field $\A/\mathfrak m_\F= \RF$ of size $q$, a power of $p$ (and use similar notation for extensions $\F \subset \L$). We will think of $\A$ as fixed and vary over extensions $\A \subset \B$ with residue field extension of degree $m = [\RL:\RF]$. We define $|\cdot |_{\L}$ to be the non-archimedean metric on $\L$ (so that $|\pi_\L|_\L = q^{-m}$) and $\frob{\L}$ to be the Frobenius acting on the unramified closure of $\L$. We denote the Frobenius element in $\Gal(\overline{\mathbb F}_q/\mathbb F_q)$ by $\frob{q}$.

We fix $\SS= \Spec\B$ and define a variety $X \to \SS$ to be a finite type, geometrically reduced scheme that is flat over $\SS$. By $\dim X$, we mean the relative dimension over $\SS$. Given an extension of schemes $T \to \SS$, we denote the base change by $X_T$ and similarly for morphisms between varieties. For a ring $R$, we let $X(R)$ denote the $\Spec R$ valued points of $X$. 

Given a scheme $X\to\Spec \B$ and a point $x \in X(\B)$, we define $\cO_{X,x}$ to be the localization at the maximal ideal $\mathfrak m_{X,x}$ corresponding to the reduction $\overline{x} \in X(\B/\mathfrak m_\B)$ and $\widehat{\cO}_{X,x}$ to be its completion at its maximal ideal. When $X$ is smooth, a complete set of local co-ordinates $t_1,\dots,t_{\dim X}$ at $x$ determines an isomorphism $\widehat\cO_{X,x} \cong \B[\![t_1,\dots,t_{\dim X}]\!]$ and consequently, 
\[\widehat{\cO}_{X,x}(\B) = \widehat{\cO}_{X,x}(\L) = \mathfrak m_B^{\dim X}.\]
Here, $\widehat{\cO}_{X,x}(\B)$ is taken to be the space of \emph{continuous} ring homomorphisms to $\B$.

\subsection{$p$-adic integration.}\label{section: canonical measure}

We will use techniques of $p$-adic integration throughout this paper. In this section, we give a brief introduction and define our normalizations. We fix an extension $K \subset L$ with residue field $\mathbb F_{q^m}$ and $U \to \Spec \B$ a smooth variety of dimension $d$ in this section. The set $U(\L)$ carries the natural structure of a \emph{$\L$-analytic manifold} \cite[\S2.4]{igusa2000introduction}.

\begin{remark}
    In fact, we need not restrict to $U$ being a smooth variety, the theory works without modification for $U$ an algebraic space and we will need this generality in the final section. However, for simplicity we stick to the case of $U$ being a scheme in this introduction and refer to \cite[\S 2.1]{groechenig2020geometric} for the details in the case of $U$ being an algebraic space.
\end{remark}

\begin{definition}[$p$-adic integration with respect to a form]
Let $\omega \in \Omega_U^d$ be a top form. For $V \subset U(\L)$ a compact subset, one can define the integral
\[\mu_{\omega}(V) \coloneqq \int_{V}|\omega| \in \mathbb R_{\geq 0}.\]
If $\omega = f(x)dx_1\wedge\dots\wedge dx_n$ for a $\L$ analytic function $f$ and local co-ordinates $x_1,\dots,x_n$ on $V$, then
\[\int_{V}|\omega| = \int_{V'}|f(x)|d\mu_{\L^d} \in \mathbb R_{\geq 0}\]
where we identify $V$ with an open subset $V'$ of $\L^d$ using the $x_i$ and $\mu_{\L^d}$ is the canonical Haar measure on $\L^d$ normalized so that $\mu_{\L^d}(\B^d) = q^{md}$. Note that this normalization is slightly non-standard.

\end{definition}

\begin{definition}[Measures from compactifications]\label{defn: measures from scheme compactifications}

Given an open immersion $U \subset X$ over $\Spec\B$ with $X$ a smooth and proper variety, we can assign a canonical measure $\mu_X$ on the set $U(\L)$ \cite[\S 4.1]{yasuda2017wild}. We cover $X$ by Zariski opens $U_i$ such that $U_i(\B)$ covers $X(\B) = X(\L)$ and moreover trivializes $\Omega_X^d|_{U_i}$. Let $\omega_i \in \Omega_X^d(U_i)$ be gauge forms, i.e., nowhere vanishing sections. While these $\omega_i$ might not glue together on intersections, the transition functions are are nowhere vanishing on $(U_i\cap U_j)(\B)$ and hence have $p$-adic norm $1$. Therefore, the measures $\mu_{\omega_i}$ on the $p$-adic analytic sets $U_i(\B)$ glue together to give a unique measure $\mu_X$.
\end{definition}

We note that our normalization is such that $\mu_X(U(\L)) = |X(\mathbb F_{q^m})|$ which is \emph{not} the standard one found in the literature. 

The following is easy but very useful.

\begin{thm}\cite[Proposition 2.1]{groechenig2020geometric}\label{thm: change of variables}
Suppose we have a top form $\omega \in \Omega_U^d$ and $V \subset U(\L)$ a compact subset so that $\mu_\omega(V)$ is well defined. Then
\begin{enumerate}
    \item If $V = Z(\L)$ for $Z$ a Zariski-closed subscheme, $\mu_\omega(V) = 0$.
    \item Suppose we have an \'etale map $f: \widetilde U \to \widetilde U/G = U$ where $G$ is an \'etale group scheme over $\B$ acting freely on $\widetilde U$. Then
    \[\frac{1}{|G(\L)|}\int_{f^{-1}(V)}|f^*\omega| = \int_{V}|\omega|.\]
\end{enumerate}
\end{thm}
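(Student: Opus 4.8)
The plan is to prove the two statements separately, both by reducing to the local statement on $\L$-analytic manifolds. For part (1), I would use that a Zariski-closed proper subscheme $Z \subsetneq U$ has, locally on $U(\L)$, the property that $Z(\L)$ is cut out by the vanishing of at least one nonzero $\L$-analytic function $g$ (a defining equation of $Z$ in a chart). Cover $V = Z(\L)$ by finitely many such charts $V_i'$ inside $\L^d$ on which $Z$ is contained in $\{g_i = 0\}$ with $g_i \not\equiv 0$; it then suffices to show that $\{g_i = 0\} \subset \L^d$ has Haar measure zero. This is standard: a nonzero $\L$-analytic (in fact, one may even reduce to polynomial) function on a polydisc has a zero set of measure zero, for instance by Weierstrass preparation in one of the variables followed by Fubini, since a nonzero one-variable power series over a local field has only finitely many zeros in any disc. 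Since $\mu_\omega$ restricted to the chart is $|f|\, d\mu_{\L^d}$ with $f$ bounded, $\mu_\omega(V_i') \le \sup|f| \cdot \mu_{\L^d}(\{g_i=0\}) = 0$, and summing over the finite cover gives $\mu_\omega(V) = 0$.

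For part (2), I would first reduce to the case where $V$ is small. Since $f\colon \widetilde U \to U$ is finite \'etale of degree $|G(\L)|$ and $G$ acts freely, every point of $U(\L)$ has an open neighborhood $W$ over which $f^{-1}(W) = \bigsqcup_{g \in G(\L)} s_g(W)$ splits as a disjoint union of $|G(\L)|$ analytic sections $s_g\colon W \to \widetilde U(\L)$, each an analytic isomorphism onto its image (this is the $\L$-analytic inverse function theorem applied to the \'etale map, together with the fact that the $G$-action permutes the sheets simply transitively). By compactness, cover $V$ by finitely many such $W$'s and, after refining to a partition into measurable pieces (or a partition of unity), assume $V \subset W$ with $f^{-1}(V) = \bigsqcup_g s_g(V)$. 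Then
\[
\int_{f^{-1}(V)} |f^*\omega| \;=\; \sum_{g \in G(\L)} \int_{s_g(V)} |f^*\omega| \;=\; \sum_{g \in G(\L)} \int_{V} |s_g^*f^*\omega| \;=\; \sum_{g \in G(\L)} \int_V |(f \circ s_g)^*\omega| \;=\; |G(\L)|\int_V |\omega|,
\]
where the second equality is the change-of-variables formula for the analytic isomorphism $s_g$ (which introduces no Jacobian factor precisely because $|\cdot|$ of the Jacobian of an analytic isomorphism of $\L$-manifolds is $1$, both sheets carrying the pulled-back form), and the last equality uses $f \circ s_g = \mathrm{id}_W$. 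Dividing by $|G(\L)|$ gives the claim.

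The main obstacle is the bookkeeping in part (2): one must be careful that the local splitting of $f^{-1}(W)$ into sections is compatible across the finite cover and that the measures $\mu_{f^*\omega}$ on the overlapping charts of $\widetilde U(\L)$ genuinely glue (which they do, since $f^*\omega$ is a global form and the only ambiguity in $\mu_X$-type constructions is by transition functions of $p$-adic norm $1$). A clean way to avoid partition-of-unity subtleties is to work with the Borel measure directly: both sides of the desired identity are Borel measures in $V$ (for fixed $\omega$), so it suffices to check equality on a basis of opens $V = W$ on which $f$ splits, and there the displayed computation applies verbatim with $V = W$. I do not expect the smoothness/algebraic-space generality mentioned in the remark to cause trouble, since everything is local on the analytic manifold $U(\L)$ and \'etale-locally an algebraic space is a scheme.
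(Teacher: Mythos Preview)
Your approach is essentially the paper's: the paper dispatches part (1) as ``well known'' and part (2) by the one-line observation that $f$ induces a covering space map of $\L$-analytic manifolds of degree $|G(\L)|$, which is exactly what you spell out in detail.

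One caveat worth flagging in your part (2): your claim that \emph{every} point of $U(\L)$ admits a neighbourhood $W$ with $f^{-1}(W)=\bigsqcup_{g\in G(\L)} s_g(W)$ is not quite true. The fibre over $y\in U(\L)$ is a $G$-torsor over $\Spec\L$, and its set of $\L$-points has size $|G(\L)|$ if $y\in f(\widetilde U(\L))$ and is empty otherwise (e.g.\ take $G=\bbZ/2$, $\widetilde U=\bbG_m\to\bbG_m=U$ by squaring: non-squares in $U(\L)$ have no preimage). So $\widetilde U(\L)\to U(\L)$ is a degree-$|G(\L)|$ cover only onto its image, and the displayed identity holds for $V\subset f(\widetilde U(\L))$. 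The paper's one-liner glosses over the same point, and in the only application (Theorem~\ref{thm: incidence identity}) $V$ is chosen inside the image, so this does not affect anything downstream; still, your sentence ``every point of $U(\L)$ has\ldots'' should read ``every point of $f(\widetilde U(\L))$ has\ldots''. Also, your parenthetical about the Jacobian of an analytic isomorphism having norm $1$ is not the right justification (that is false in general); the displayed chain of equalities is correct simply because the change-of-variables formula for top forms is $\int_B|\eta|=\int_A|\phi^*\eta|$, with the Jacobian already absorbed into $\phi^*$.
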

\begin{proof}
The first part is well known while the second follows from the observation that the map $f$ induces a covering space map of $\L$-analytic manifolds
\[f: \widetilde{U}(\L) \to U(\L)\]
of degree $|G(\L)|$. The theorem is then immediate.
\end{proof}
\subsection{Palindromic forms}

Let $K_0(\mathrm{var})$ be the Grothendieck ring of varieties over $\mathbb F_q$. We consider the extension given by 
\[\widetilde{K_0(\mathrm{var})} = \left(K_0(\mathrm{var})\otimes\mathbb Q\right)\left[\frac{\mathbb L^k - \mathbb L}{\mathbb L^{k}-1}\right]_{k\geq 2}\]
where $\mathbb L = [\mathbb A^1]$ is the Lefschetz motive. This ring has implicitly and explicitly appeared before in the literature on motivic integration (\cite{denef1991functional}, \cite[\S 7]{denef_loeser_2000}).

\begin{definition}\label{defn: palindromic forms}

We will be concerned with functions $\rho: \mathbb N \to \mathbb Q$ which are of the form
\[m \to |X(\mathbb F_{q^m})|\]
for $X \in \widetilde{K_0(\mathrm{var})}$. Explicitly, such functions are $\bbQ$-linear combinations of products of functions of the form
\[m \to \frac{q^{mk}-q^m}{q^{mk}-1} \text{ and } m \to \sum_{i}\alpha_{i}^m\]
where the $\alpha_i$ are Weil numbers (relative to $q$).\footnote{A Weil number relative to $q$ is an algebraic integer $\alpha$ such that there exists a $w \in \mathbb N$ so that under any complex embedding $\sigma: \overline{\mathbb Q} \to \mathbb C, |\sigma(\alpha)| = q^{w/2}$.} 
Both types of functions can be extended to have domain $\mathbb Z$ by allowing negative powers in the exponents. We say that such a function has weight $k$ if the natural extension of the function to $\mathbb Z$ given by the same formula satisfies
\[\rho(-m) = q^{-mk}\rho(m)\]
and we call such functions \emph{palindromic forms of weight $k$}.
\end{definition}

\begin{example}\label{ex: roots of untiy are palindromic forms}
Consider the class of the scheme $\cP_k = \Spec \mathbb F_{q^k} \in \widetilde{K_0(\mathrm{var})}$. The point counts $\cP_k(\mathbb F_{q^m})$ correspond to the function
\[m \to \gcd(m,k) = \sum_{a=1}^k\zeta_{k}^{am}.\]
This is a palindromic form of weight $0$.
\end{example}

\begin{remark}\label{rmk: palindromic forms closed under sums and products}
It is immediate that the sum of two palindromic forms of weight $k$ has weight $k$ while the product of palindromic forms of weights $k_1,\dots,k_r$ has weight $k_1+\dots+k_r$.
\end{remark}

\begin{lemma}\label{lemma: point counts have right weight}
Let $X/\mathbb F_q$ be a smooth, proper variety. The function $m \to \rho_X(m) = |X(\mathbb F_{q^m})|$ is a palindromic form of weight $\dim X$.
\end{lemma}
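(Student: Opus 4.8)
The plan is to invoke the Weil conjectures for the smooth proper variety $X/\mathbb{F}_q$. By the Lefschetz trace formula, $|X(\mathbb{F}_{q^m})| = \sum_{i=0}^{2d} (-1)^i \tr(\frob{q}^m \mid H^i_{\text{\'et}}(X_{\overline{\mathbb F}_q}, \mathbb{Q}_\ell))$ where $d = \dim X$. Writing the eigenvalues of $\frob{q}$ on $H^i$ as $\alpha_{i,1}, \dots, \alpha_{i,b_i}$, this gives $\rho_X(m) = \sum_i (-1)^i \sum_j \alpha_{i,j}^m$. By the Riemann hypothesis part of the Weil conjectures (Deligne), each $\alpha_{i,j}$ is a Weil number relative to $q$ of weight $i$; in particular $\rho_X(m)$ is a $\mathbb{Z}$-linear combination of functions of the form $m \to \sum_i \alpha_i^m$ with $\alpha_i$ Weil numbers, hence lies in the class of functions described in Definition \ref{defn: palindromic forms} (taking the associated element of $\widetilde{K_0(\mathrm{var})}$ to be $[X]$ itself, which makes the point-count identity tautological).

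It remains to check the weight-$d$ palindromy $\rho_X(-m) = q^{-md}\rho_X(m)$ for the natural extension to $\mathbb{Z}$. The natural extension sends $m \to \sum_i (-1)^i \sum_j \alpha_{i,j}^m$ with the exponent now allowed to be negative, i.e. $\rho_X(-m) = \sum_i (-1)^i \sum_j \alpha_{i,j}^{-m}$. The key input is Poincar\'e duality in \'etale cohomology: the cup product pairing $H^i \times H^{2d-i} \to H^{2d} \cong \mathbb{Q}_\ell(-d)$ is perfect and Galois-equivariant, so the eigenvalues on $H^{2d-i}$ are exactly $\{q^d/\alpha_{i,j}\}_j$. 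Therefore, reindexing $i \to 2d-i$,
\[
\rho_X(m) = \sum_{i} (-1)^{i}\sum_{j} \alpha_{i,j}^m = \sum_i (-1)^{2d-i} \sum_j \left(\frac{q^d}{\alpha_{i,j}}\right)^m = q^{dm}\sum_i (-1)^i \sum_j \alpha_{i,j}^{-m} = q^{dm}\rho_X(-m),
\]
using $(-1)^{2d-i} = (-1)^i$. Rearranging gives $\rho_X(-m) = q^{-dm}\rho_X(m)$, which is precisely the definition of a palindromic form of weight $d = \dim X$.

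I do not expect a serious obstacle here: everything reduces to standard properties of $\ell$-adic cohomology of smooth proper varieties over finite fields (the trace formula, purity, and Poincar\'e duality), all of which may be cited. The only small point requiring a word of care is that the ``natural extension to $\mathbb Z$'' in Definition \ref{defn: palindromic forms} is defined by writing $\rho_X$ in the normal form $\sum_i \beta_i \cdot \prod(\dots)$ of Weil-number exponentials (no factors of the $\eta_k$-type appear for a smooth proper $X$) and then negating exponents; one should remark that this extension is well-defined, i.e. independent of the chosen expression, which follows because distinct characters $m \to \alpha^m$ are linearly independent as functions on $\mathbb{N}$ (Artin's independence of characters), so the exponential expansion is unique. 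With that noted, the computation above is the whole proof.
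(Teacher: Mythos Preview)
Your proof is correct and follows essentially the same approach as the paper: both invoke the Grothendieck--Lefschetz trace formula to express $\rho_X(m)$ in terms of Frobenius eigenvalues and then use Poincar\'e duality to pair the eigenvalues on $H^i$ with those on $H^{2d-i}$, yielding $\rho_X(-m)=q^{-dm}\rho_X(m)$. Your additional remark on the well-definedness of the extension to $\mathbb{Z}$ via linear independence of characters is a nice clarification that the paper omits.
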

\begin{proof}
By the Grothendieck-Lefschetz trace formula, one has Weil numbers $\alpha_{ij}$ for $0 \leq i \leq 2\dim X$ and $1 \leq j \leq \dim H^i_{\text{\'et}}(X\times_{\mathbb F_q}\overline{\mathbb F}_q,\mathbb Q_\ell)$ such that
\[\rho_X(m) = \sum_{i=0}^{2\dim X}\sum_{j}\alpha_{ij}^m.\]
Since $X$ is smooth and proper, Poincar\'e duality guarantees us that for each $i,j$, there exists a $j'$ (with $j \to j'$ a permutation) such that $\alpha_{2\dim X-i,j'} = q^{\dim X}\alpha_{ij}^{-1}$. Therefore
\[\rho_X(-m) = \sum_{i=0}^{2\dim X}\sum_{j}\alpha_{ij}^{-m} = q^{-m\dim X}\sum_{i=0}^{2\dim X}\sum_{j}\alpha_{2\dim X-i,j'}^m = q^{-m\dim X}\rho_X(m)\]
as required.
\end{proof}

\begin{lemma}\label{lemma: twisted point counts have the right weight}
Let $X/\mathbb F_q$ be a smooth proper variety of dimension $k$ and let $\ell$ be a prime coprime to $q$. For $g \in \mathrm{Aut}_{\mathbb F_q}(X)$ of finite order $d$, the function
\[m \to \rho_{X,g}(m) \coloneqq \sum_{i=0}^{2\dim X}(-1)^i\tr\left((g+g^{-1})\frob{q}^m|H^i_{\text{\'et}}(X\times_{\mathbb F_q}\overline{\mathbb F}_q,\mathbb Q_\ell)\right)\]
is a palindromic form of weight $\dim X$.
\end{lemma}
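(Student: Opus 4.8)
The plan is to mimic the proof of Lemma \ref{lemma: point counts have right weight}, but now track how the automorphism $g$ and Poincar\'e duality interact. Let $V^i = H^i_{\text{\'et}}(X\times_{\mathbb F_q}\overline{\mathbb F}_q,\mathbb Q_\ell)$. Since $g$ has finite order $d$ coprime to $\ell$ (we may assume $\ell \nmid d$ by choosing $\ell$ appropriately, or just note $g$ acts semisimply on $V^i$ regardless since its minimal polynomial divides $T^d-1$ which is separable over $\mathbb Q_\ell$), the operator $g$ is diagonalizable on each $V^i$ with eigenvalues that are $d$-th roots of unity. Moreover $g$ commutes with $\frob{q}$ (both are $\mathbb F_q$-automorphisms acting on \'etale cohomology, and $g$ is defined over $\mathbb F_q$), so we may simultaneously decompose $V^i = \bigoplus_{\zeta} V^i_\zeta$ into $g$-eigenspaces, each stable under $\frob{q}$. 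On $V^i_\zeta$, $(g+g^{-1})\frob{q}^m$ acts as $(\zeta+\zeta^{-1})\frob{q}^m$, so $\tr\big((g+g^{-1})\frob{q}^m \mid V^i\big) = \sum_\zeta (\zeta+\zeta^{-1})\sum_j (\alpha^i_{\zeta,j})^m$ where $\alpha^i_{\zeta,j}$ are the Frobenius eigenvalues on $V^i_\zeta$. Since each $\alpha^i_{\zeta,j}$ is a Weil number (relative to $q$) of weight $i$ and $\zeta+\zeta^{-1}$ is a fixed algebraic number, this exhibits $\rho_{X,g}(m)$ as a $\mathbb Q$-linear combination (in fact $\overline{\mathbb Q}$-linear, but the trace is rational, or one can sum over Galois conjugates) of functions $m \to \sum \beta^m$ with $\beta$ Weil numbers; hence it is of the required form and extends to $\mathbb Z$.

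For the weight statement, I would use that Poincar\'e duality $V^i \times V^{2k-i} \to \mathbb Q_\ell(-k)$ is a perfect pairing that is $\frob{q}$-equivariant \emph{and} $g$-equivariant (since $g$ acts on $X$ over $\mathbb F_q$, it respects the cup product and the fundamental class). Equivariance under $g$ forces the duality to pair the $\zeta$-eigenspace $V^i_\zeta$ perfectly with the $\zeta^{-1}$-eigenspace $V^{2k-i}_{\zeta^{-1}}$ (because $\langle g v, g w\rangle = \langle v, w\rangle$ forces $\zeta \cdot \zeta' = 1$ for a nonzero pairing of a $\zeta$-vector with a $\zeta'$-vector). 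Combined with $\frob{q}$-equivariance (which introduces the twist $q^k$), we get: for every eigenvalue $\alpha$ of $\frob{q}$ on $V^i_\zeta$ there is a matching eigenvalue $q^k\alpha^{-1}$ of $\frob{q}$ on $V^{2k-i}_{\zeta^{-1}}$, with multiplicities matching. Now observe that the weighting factor for the $\zeta$-piece of $V^i$ is $(-1)^i(\zeta+\zeta^{-1})$ and for the $\zeta^{-1}$-piece of $V^{2k-i}$ is $(-1)^{2k-i}(\zeta^{-1}+\zeta) = (-1)^i(\zeta+\zeta^{-1})$ — these are \emph{equal}, which is exactly why the symmetrization $g+g^{-1}$ (rather than just $g$) appears in the definition. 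Therefore the involution on the index set sending $(i,\zeta,\alpha) \mapsto (2k-i,\zeta^{-1},q^k\alpha^{-1})$ preserves the coefficients, and
\[
\rho_{X,g}(-m) = \sum (-1)^i(\zeta+\zeta^{-1})\,\alpha^{-m} = q^{-mk}\sum (-1)^i(\zeta+\zeta^{-1})\,(q^k\alpha^{-1})^m = q^{-mk}\rho_{X,g}(m),
\]
where the middle sum is reindexed by the involution. This gives weight $\dim X = k$.

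I anticipate that the only genuinely delicate point is the $g$-equivariance of Poincar\'e duality and the resulting compatibility of eigenspace decompositions — specifically verifying that the cup-product pairing $V^i \otimes V^{2k-i} \to \mathbb Q_\ell(-k)$ intertwines the $g$-action on both factors with the trivial action on $\mathbb Q_\ell(-k)$ (since $g$, being an automorphism of $X$, acts trivially on $H^{2k}_{\text{\'et}}(X,\mathbb Q_\ell) \cong \mathbb Q_\ell(-k)$ as $X$ is connected — here one should note $X$ may be reducible, in which case one works component by component, or more carefully $g$ permutes the top cohomology of the components and one must check the combinatorics still works out; assuming $X$ geometrically connected streamlines this, and the general case follows by additivity in $K_0(\mathrm{var})$). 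The rest is bookkeeping: checking that $g$ acts semisimply (immediate from finite order prime to $\ell$, or just from order prime to the characteristic together with the comparison with a characteristic-zero lift if needed), that $g$ and $\frob{q}$ commute, and that the trace is rational so no descent issue arises. Everything else is a direct transcription of the Poincar\'e-duality argument already used for Lemma \ref{lemma: point counts have right weight}, with the eigenvalue bookkeeping refined to keep track of the $g$-eigenvalue $\zeta$ alongside the Frobenius eigenvalue $\alpha$.
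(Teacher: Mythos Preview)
Your proof is correct and follows essentially the same route as the paper's: decompose cohomology into simultaneous $g$- and Frobenius-eigenspaces, use $g$-equivariance of Poincar\'e duality to pair the $\zeta$-eigenspace in degree $i$ with the $\zeta^{-1}$-eigenspace in degree $2k-i$, and observe that the symmetrized coefficient $\zeta+\zeta^{-1}$ is preserved under this involution. Your write-up is in fact slightly more careful than the paper's about not needing Frobenius to be semisimple (only $g$), and you correctly flag the geometric connectedness needed for $g$ to act trivially on top cohomology.
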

\begin{proof}
Since $g$ is defined over $\mathbb F_q$, it commutes with the action of $\frob{q}$ on \'etale cohomology. Since it is of finite order, it acts semisimply and we can find a common set of eigenvectors $v_{ij} \in H^i_{\text{\'et}}(X\times_{\mathbb F_q}\overline{\mathbb F}_q,\overline{\mathbb Q}_\ell)$ with
\[\frob{q}(v_{ij}) = \alpha_{ij}v_{ij}, g(v_{ij}) = \lambda_{ij}v_{ij}\]
with the $\alpha_{ij}$ as in the previous lemma and the $\lambda_{ij}$ roots of unity of order dividing $d$. Also as in the previous lemma, Poincar\'e duality guarantees us that for each $i,j$, there exists a $j'$ with a $G$-equivariant pairing
\[\overline{\mathbb Q}v_{ij} \otimes \overline{\mathbb Q} v_{ij'} \to H^{2\dim X}(X\times_{\mathbb F_q}\overline{\mathbb F}_q,\overline{\mathbb Q}_\ell) = \overline{\mathbb Q}(-\dim X)\]
so that $\alpha_{2\dim X-i,j'} = q^{\dim X}\alpha_{ij}^{-1}$ and $\lambda_{2\dim X-i,j'} = \lambda_{ij}^{-1}$ since $G$ acts trivially on the top degree cohomology\footnote{This can be seen, for instance, through the cycle class map since the class of a point generates the top degree cohomology and any two points are algebraically equivalent, therefore the $g$-action on a point is trivial.}. Therefore,
\begin{align*}
    \rho_{X,g}(m) &= \sum_{i=0}^{2\dim X}\sum_{j}\alpha_{ij}^m\left(\lambda_{ij} + \lambda_{ij}^{-1}\right)\\
    \implies \rho_{X,g}(-m) &= \sum_{i=0}^{2\dim X}\sum_{j}\alpha_{ij}^{-m}\left(\lambda_{ij} + \lambda_{ij}^{-1}\right)\\
    &=\sum_{i=0}^{2\dim X}\sum_{j}q^{-m\dim X}\alpha_{2\dim X-i,j'}^{m}\left(\lambda_{2\dim X-i,j'}^{-1}+\lambda_{2\dim X-i,j'}\right)\\ &= q^{-m\dim X}\rho_{X, g}(m).
\end{align*}
\end{proof}

\begin{remark}\label{rmk: individual densities are not palindromic}
We note that the function
\[m \to \sum_{i=0}^{2\dim X}(-1)^i\tr\left(g\frob{q}^m|H^i_{\text{\'et}}(X\times_{\mathbb F_q}\overline{\mathbb F}_q,\mathbb Q_\ell)\right)\]
is, perhaps surprisingly, not a palindromic form in general. Let $E$ be the elliptic curve defined by the equation $y^2 = x^3 + x$ over $\mathbb Z_p$ with $p \equiv 1 \pmod{4}$ and a fixed  $i = \sqrt{-1} \in \mathbb Z_p$. We have an automorphism $g: E \to E$ defined over $\mathbb Z_p$ given by $g(x,y) = (-x,iy)$. Since $g$ has eigenvalues $\pm i$ on $H^1_{\text{\'et}}(\overline{E},\mathbb Q_\ell)$, there exist $a,b \in \mathbb Z$ such that $a^2 + b^2 = p$ and\footnote{For instance, if $p = 5$ with $i \equiv 2 \pmod{5}$, we have $a = 1, b = 2$.}
\begin{align*}
    \nu(m) \coloneqq \sum_{i=0}^{2\dim X}(-1)^i&\tr\left(g\frob{p}^m|H^i_{\text{\'et}}(X\times_{\mathbb F_p}\overline{\mathbb F}_p,\mathbb Q_\ell)\right) = 1 + (i(a+ib)^m -i(a-ib)^m) + p^m.
\end{align*}
Therefore,
\begin{align*}
\nu(-m) =& 1 + (i(a+ib)^{-m} -i(a-ib)^{-m}) + p^{-m}\\
= &p^{-m}\left(p^m + (i(a-ib)^{m} -i(a+ib)^{m}) + 1 \right) \\\neq& p^{-m}\nu(m)
\end{align*}
which proves that $\nu$ is not a palindromic form.
\end{remark}

\begin{example}\label{defn: rho, eta}
For any extension $\A\subset \B$ where $\B$ has residue field $q^m$, we define
\[\rhol_k(m;q) \coloneqq \int_{\mathfrak m_\L}|t^k|_\L dt = \frac{q^m-1}{q^{m(k+1)}-1}\]
and $\eta_k(m;q) = \rhol_k(m;q) - 1$. The function $m \to \eta_k(m;q)$ is a palindromic form of weight $1$:
\[\eta_k(-m;q) = \frac{q^{-m} - q^{-m(k+1)}}{q^{-m(k+1)}-1} = q^{-m}\frac{q^{m(k+1)}-q^m}{1-q^{m(k+1)}} = q^{-m}\eta_k(m;q).\]
Note that $\rhol_k,\eta_k$ depend on $\B$ only through the size of its residue field $q^m$. Indeed, $\eta_k(m;q)$ is a function purely of $q^m$ but we write it this way to emphasize that we will think of $q$ as fixed and $m$ as varying.

\begin{definition}\label{defn: palindromic forms as a function of rings}
We say that a function $\rho: \{\text{extensions } \F \subset \L\} \to \mathbb Q$ is a palindromic form of weight $k$ if
\begin{enumerate}
    \item The function depends on $\L$ only through the degree of residue field extensions $m$.
    \item The function $m \to \rho(\L)$ is a palindromic form of weight $k$.
\end{enumerate}
\end{definition}
\end{example}

\subsection{The poset of admissible pairs}\label{section: admissible pairs}

We discuss here a purely group theoretic notion that will become crucial in the proof of Theorem \ref{thm: Main theorem, density fixing inertia and frobenius coset}. Let $G$ be a finite group.

We say that a tuple $\tau = (H_\tau,g_\tau H_\tau)$ consisting of a subgroup and a left coset is an \emph{admissible pair} if $g_\tau$ normalizes $H_\tau$, i.e., $g_\tau H_\tau = H_\tau g_\tau$. The data of an admissible pair $\tau$ is equivalent to the choice of a pair of subgroups $H_{\tau} \subset H_{1,\tau} \subset G$ such that $H_\tau$ is normal in $H_{1,\tau}$ and $C_\tau \coloneqq H_{1,\tau}/H_{\tau}$ is cyclic along with the choice of a distinguished generator $g_\tau$ for this cyclic group. Here, $H_{1,\tau}$ corresponds to the subgroup generated by $g_{\tau}$ and $H_{\tau}$.

We say that $\tau' \leq \tau$ for two admissible pairs if $H_{\tau'} \subset H_{\tau}$ and $g_{\tau'} \in g_{\tau}H_{\tau}$. This is easily checked to be a partial order. There is one maximal element $(G,G)$ while the minimal elements correspond exactly to $(e,g)$ for $e$ the identity subgroup and $g \in G$ any element.

For $\lambda \in G$, we define the conjugate of $\tau = (H,gH)$ by $\lambda$ by
\[\lambda \tau \lambda^{-1} \coloneqq (\lambda H \lambda^{-1}, \lambda gH \lambda^{-1}).\]
We also define the inverse of an admissible pair as
\[(H,gH)^{-1} \coloneqq (H,g^{-1}H).\]
Both conjugation and inversion are easily seen to be automorphisms of the poset of admissible pairs. 


We will use the language of an incidence algebra on a poset. We briefly recall this notion for the convenience of the reader. Given a poset $\cP$, an interval $[a,b]$ in it is the set of elements $c$ such that $a \leq c \leq b$ (where we tacitly assume that $a\leq b$). Its incidence algebra $\cI_{\cP}$ (over any ring $R$) is the ring of functions from the set of intervals to $R$. Addition in $\cI_{\cP}$ is pointwise while multiplication is defined by convolution:
\[\text{For all } \alpha,\beta \in \cI_{\cP}, \hspace{10 mm} (\alpha\ast\beta)([a,b]) = \sum_{a \leq c \leq b}\alpha([a,c])\beta([c,d]).\]
This multiplication need not be commutative. An element $\alpha \in \cI$ is invertible precisely when $\alpha([a,a]) \in R^\times$ for all $a \in \cP$. We will suppress the ring $R$ from the notation, it will usually be taken to be $\mathbb Q$ or $\mathbb R$.

\begin{definition}\label{defn: alpha, beta}
In particular, we will make use of the following elements in $\cI_{\cP}$ for $\cP$ the poset of admissible pairs:
\[\alpha([\tau',\tau]) \coloneqq |\{\lambda \in H_\tau\backslash G : \lambda \tau' \lambda^{-1} \leq \tau\}|,\]
\[\beta([\tau',\tau]) \coloneqq |\{\tau'' \in \cP : \tau'' = \lambda \tau' \lambda^{-1} \leq \tau \text{ for some } \lambda \in G.\}|.\]
and $\gamma([\tau',\tau]) \coloneqq \alpha([\tau',\tau])/\beta([\tau',\tau])$. If $\tau' \leq \tau$, then $\alpha([\tau',\tau]),\beta([\tau',\tau])$ are both at least $1$ since $\tau'$ is conjugate to itself. Therefore, $\gamma([\tau',\tau]) > 0$ when $\tau' \leq \tau$.

Note that the condition in the definition of $\alpha$ indeed only depends on the coset $H_\tau\lambda$. It is also clear that $\alpha$ and $\beta$ are invariant under conjugating the poset by an element of $G$.
\end{definition}

\subsection{Generically finite, Galois maps}

Let $X,Y$ be two geometrically connected, smooth and proper varieties relative to $\Spec \A$.

\begin{definition}\label{defn: generically fintie}[generically finite]
A map $f: X \to Y$ is said to be generically finite if there is some open $U \subset Y$ such that $f: f^{-1}(U) \to U$ is \'etale and finite. There is in fact a maximal such open which we will denote by $\U{f}$ and the preimage of the complement $\Z{f} = f^{-1}(Y - \U{f})$ will be called the exceptional locus. This is always codimension $1$ in $X$ by the Jacobian criterion (see \ref{defn: Jacobian}). We furthermore require that $\Z{f}$ has no components supported purely over the special fiber of $\Spec \A$ or equivalently, that there is some closed point on $Y$ over $\Spec \A/\pi \A$ over which the map $f$ is \'etale.

\end{definition}

\begin{definition}\label{defn: generically finite, galois}[generically finite, Galois]
Let $G$ be a constant group scheme over $\Spec \A$. A generically finite map $f: X \to Y$ is said to be Galois with Galois group $G$ if the restriction of the map to $\U{f}$ is an \'etale Galois cover with Galois group $G$ and every such automorphism extends to an automorphism of $f$. That is to say,
\[\mathrm{Aut}_\A(f:X\to Y) = \mathrm{Aut}_\A(f: f^{-1}(\U{f}) \to \U{f}) = G.\] 

\end{definition}

\begin{example}
An interesting family of generically finite, Galois maps are the following. Let $C$ be a curve of genus $g \geq 2$. Let $D$ be a degree $g$ divisor on $C$. Then, the morphism
\begin{align*}
    C^g &\to \Jac(C);\\
    (P_1,\dots,P_g) &\to [P_1] + \dots + [P_g] - D
\end{align*}
is an example of a generically finite map. It would be interesting to compute splitting densities for these maps although we don't pursue it in this paper.
\end{example}

This paper will be mainly concerned with such maps and to that end, we state a few preliminary definitions and properties of such maps here.

\begin{definition}\label{defn: decomposition group}[Decomposition group]
Let $P \in \U{f}(\L)$ be a rational point away from the ramified locus. Then, $f^{-1}(P)$ is an \'etale $\L$ algebra and each geometric point $Q \in f^{-1}(P)$ over $\overline{\L}$ defines a homomorphism $\Gal(\overline{\L}/\L) \to G$. We denote the image (known as the decomposition group of $Q$ over $P$) by $D_Q \subset G$. Correspondingly, the image of the inertia group $\Gal(\overline{L}/L^{\mathrm{ur}})$ is denoted by $I_Q \subset G$. The image of the Frobenius coset corresponds to a generating coset $\frob{Q} \in D_Q/I_Q$. We note that the pair $\tau_Q = (I_Q,\sigma_Q)$ is an \emph{admissible pair} as in \S \ref{section: admissible pairs}.

If $P \in \U{f}(\B)$, then $f^{-1}(P)$ is an unramified extension of $\B$ and a choice of $Q$ thus determines a well defined Frobenius element $\frob{Q} \in G$.
\end{definition}

\begin{remark}\label{rmk: conjugacy of stuff}
Different choices of $Q$ over $P \in U(\L)$ correspond to conjugating $D_Q$ by elements in $G$. More precisely, let $(Q_1,\dots,Q_n)$ be the geometric points of $X$ mapping to $P$. This set has a natural action of $G$ and is a $G$-torsor under this action.

If $Q_i = \lambda(Q_{j})$ for some $\lambda \in G$, then $I_{Q_i} = \lambda I_{Q_j}\lambda^{-1}$ and similarly for the decomposition groups and Frobenius cosets. Therefore, we can define $\tau_P$ as the conjugacy class of $\tau_{Q_i}$ by picking any lift $Q_i$ of $P$ to $X$ and similarly,
\[(I_P,D_P) = \{(\lambda I_{Q_i}\lambda^{-1},\lambda D_{Q_i}\lambda^{-1}) : \lambda \in G\}.\]
\end{remark}

The following definition stratifies $P \in \U{f}(\L)$ based on $\tau_P$, i.e., the data of the (conjugacy class) of the inertia group at $P$ along with a Frobenius coset. 

\begin{definition}\label{defn: rho_f,admissible pair}
For $\tau$ an admissible pair for the group $G$, we define
\[\Utype{\tau}(\L) = \{P \in \U{f}(\L) : \tau \in \tau_P\}\]
with measure $\rhotype{\tau}(\L) = \mu_Y(\Utype{\tau}(\L))$. This only depends on the conjugacy class of $\tau$. The $\Utype{\tau}(\L)$ are in fact $\L$-analytic open sets which proves that the measures $\rhotype{\tau}(\L)$ are well defined as the next lemma shows.
\end{definition}

\begin{lemma}
For any admissible pair $\tau$, the $\Utype{\tau}(\L)$ are $\L$-analytic open sets of $X(\L)$.
\end{lemma}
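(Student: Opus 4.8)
The plan is to argue that the condition ``$\tau \in \tau_P$'' is \emph{locally constant} on $\U{f}(\L)$, which immediately implies that $\Utype{\tau}(\L)$ is open (indeed clopen) in the $\L$-analytic topology. First I would reduce to a local statement: fix a point $P_0 \in \U{f}(\L)$ and let $Q_0 \in X(\overline\L)$ be a geometric lift, giving rise to the homomorphism $\varphi_{Q_0}\colon \Gal(\overline\L/\L) \to G$ with image $D_{Q_0}$, inertia image $I_{Q_0}$ and Frobenius coset $\frob{Q_0}$. Since $f$ is finite \'etale over $\U{f}$, the fiber $f^{-1}(P_0)$ is a finite \'etale $\L$-algebra $A_0 = \prod_i \L_i$, and the decomposition group, inertia group, and Frobenius coset are all determined by the \'etale algebra structure of $f^{-1}(P_0)$ together with the $G$-action on the geometric fiber (Remark \ref{rmk: conjugacy of stuff}).

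Next I would invoke continuity of finite \'etale covers: there is a Zariski-open (hence $\L$-analytic open) neighborhood $V \subset \U{f}$ of $P_0$ over which $f^{-1}(V) \to V$ is finite \'etale, and by the theory of \'etale morphisms the analytic family of \'etale $\L$-algebras $P \mapsto f^{-1}(P)$ for $P \in V(\L)$ is ``constant'' in a neighborhood of $P_0$ in the following precise sense: there is an analytic open $W \ni P_0$ in $V(\L)$ and, after possibly passing to the \'etale algebra trivializing the $G$-torsor, an identification of $f^{-1}(W) \to W$ with $W \times (G/D) \to W$ for a suitable subgroup structure — more concretely, for $P$ sufficiently close to $P_0$, the factorization type of the \'etale algebra $f^{-1}(P)$ (the degrees and residue degrees and ramification of its factors, together with the $G$-action) agrees with that of $f^{-1}(P_0)$. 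This is because the splitting behavior of a point in a finite \'etale cover is governed by the Frobenius/inertia data of the cover, which is locally constant: two nearby $\L$-points $P, P_0$ have $G$-isomorphic geometric fibers with the same Galois action on $\pi_0$, and the local field extensions cut out are the same up to $\L$-isomorphism because the cover is \'etale and hence the completed local rings vary analytically without changing the discrete invariants. Consequently $(I_P, D_P, \frob{P})$, and in particular the conjugacy class of the admissible pair $\tau_P = (I_P, \frob{P})$, is constant on $W$; since $\tau \in \tau_P$ depends only on this conjugacy class, $W \subset \Utype{\tau}(\L)$ whenever $P_0 \in \Utype{\tau}(\L)$.

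The main obstacle is making precise the claim that the arithmetic invariants $(I_P, \frob{P})$ are locally constant in the $p$-adic topology — one must be careful that ``nearby'' $\L$-points of an \'etale cover really do cut out the same local extensions. The cleanest way I would do this: pull back along the \'etale $G$-cover $f^{-1}(\U{f}) \to \U{f}$ and use Hensel's lemma / Krasner's lemma. For $P_0$ with lift $Q_0$ lying in a component $X' \subset f^{-1}(\U{f})_\L$ with residue field some finite extension of $\L$, the point $P_0$ lifts to a genuine $\L'$-point of $X'$ for the corresponding $\L'$; by smoothness and Hensel, every $P$ in a small enough analytic neighborhood of $P_0$ also lifts to an $\L'$-point of (the same component of) the cover, with the decomposition group and inertia group read off from which components of $f^{-1}(P)_{\overline\L}$ are Galois-stable — and the component structure is constant on a neighborhood because $f$ is finite \'etale there and the number and residue degrees of geometric points in a fiber are locally constant. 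This gives $I_P = I_{P_0}$ and $\frob{P} = \frob{P_0}$ up to simultaneous $G$-conjugacy on the whole neighborhood, completing the argument that $\Utype{\tau}(\L)$ is open. Finally, applying the same reasoning to the finitely many other admissible-pair strata shows the $\Utype{\tau}(\L)$ partition $\U{f}(\L)$ into $\L$-analytic opens, which in particular makes $\rhotype{\tau}(\L) = \mu_Y(\Utype{\tau}(\L))$ well defined.
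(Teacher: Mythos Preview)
Your proposal is correct and follows essentially the same approach as the paper: both arguments reduce to the local constancy of the admissible pair $\tau_P$ via Krasner's lemma. The paper's proof is simply a one-line invocation of Krasner (citing Poonen's book) to conclude that the isomorphism type of the \'etale $\L$-algebra $f^{-1}(y)$ is locally constant in the analytic topology, whence the Galois action on the geometric fiber and hence $\tau_P$ is locally constant; your write-up elaborates the same mechanism (with some unnecessary detours through Zariski neighborhoods and torsor trivializations before arriving at Krasner/Hensel), but the substance is identical.
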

\begin{proof}
By Krasner's lemma \cite[Proposition 3.5.74]{poonen2017rational}, the isomorphism type of the \'etale scheme $f^{-1}(y)$ is a locally constant function as $y$ varies over $Y(\L)$ in the analytic topology. Therefore, both the Galois group of the fiber and its action on the geometric points are locally constant which proves that the corresponding admissible pair is also locally constant.
\end{proof}

We obtain a coarser stratification of $P \in \U{f}(\L)$ by remembering only the (conjugacy class of the) inertia group and decomposition group associated to $P$, i.e., we forget the precise generating coset corresponding to the Frobenius coset.

\begin{definition}\label{defn: rho_f,decomp,inertia}
For $\tau$ an admissible pair for the group $G$, we define
\[\Uid{\tau}(\L) = \{P \in \U{f}(\L) : (H_\tau,H_{1,\tau}) \in (I_P,D_P)\}\]
with measure $\rhoid{\tau}(\L) = \mu_Y(\Uid{\tau}(\L))$. As before, this only depends on the conjugacy class of $\tau$.
\end{definition}

We also have a change of variables formula for generically finite maps. We begin by defining the Jacobian.

\begin{definition}\label{defn: Jacobian}(Jacobian)
Let $f:X \to Y$ be a generically finite map over $\Spec \A$. Let $V \subset X$ and $U \subset Y$ be affine opens such that $f(V) \subset U$ and there exist gauge forms $\omega_V,\omega_U$ on $V,U$ respectively (well defined as usual up to a $p$-adic unit). We define the Jacobian of $f$ on $V$ by
\[\Jac(f) = \frac{f^*(\omega_U)}{\omega_V}.\]
It is independent of the choice of local gauge forms up to a unit in $\A$ and in particular, the norm $|\Jac(f)|$ is a well defined function on $X(\F)$.

Alternatively, one could view $\Jac(f)$ as the canonical map between the canonical bundles $f^*\mathscr O(K_Y) \to \mathscr O(K_X)$ or equivalently as a section of $f^*\mathscr O(K_Y)^{\vee}\otimes\mathscr O(K_X)$. Specifying local basis $f^*(\omega_U),\omega_V$ for the two line bundles then determines the function $\Jac(f)$ as defined above. It is clear from this description that the vanishing locus of $\Jac(f)$ is precisely the exceptional locus $Z_f \subset X$.

\end{definition}

\subsection{Galois twists}

We will use Galois twists of generically finite, Galois maps at multiple points in this paper. For convenience, we recall this definition and state some basic properties. We fix $f: X \to Y$ to be a generically finite, Galois map with Galois group $G$. 

\begin{definition}[Galois twists]
We say that $\widetilde{f}: \widetilde{X} \to Y$ defined over $\Spec \A$ is a twist of $f$ if there exists an unramified extension $\F \subset \F'$ such that the map $\widetilde{f}_{\mathcal O_{\F'}}: \widetilde{X}_{\mathcal O_{\F'}} \to Y_{\mathcal O_{\F'}}$ is isomorphic to $f_{\mathcal O_{\F'}}: X_{\mathcal O_{\F'}} \to Y_{\mathcal O_{\F'}}$.  
\end{definition}

Given an element $g \in G$ of order $r$, we can define the twist of $f$ with respect to $g$ as follows. 

\begin{definition}\label{defn: unramified twist by an element}[$g$-twist]
Let $\mathcal O_{\F'}$ be the unique degree $r$ unramified extension of $\A$ with $\Gal(\mathcal O_{\F'}/\A) = \mathbb Z/r\mathbb Z$ generated by $\frob{\F}$. We define $^gX = X\times_{\Spec \A}\Spec \mathcal O_{\F'}/\langle(g,\frob{\F})\rangle$ as a geometric quotient. As a scheme, it is defined over $\Spec \A$ and admits a map $^gf: {^gX} \to Y$. It is a standard fact that this is a twist of $f$. One sees from the definition that $^gX$ `\emph{is}' $X$ with the Frobenius acting by ${^g\frob{\F}} \coloneqq g\circ\frob{K}$ instead of $\frob{K}$. The cohomology of the twist with its Frobenius action can be identified with
\[(H^i_{\text{\'et}}({^gX}\times_{\mathbb F_q}\overline{\mathbb F}_q,\mathbb Q_\ell) , {^g\frob{\F}}) \cong (H^i_{\text{\'et}}({X}\times_{\mathbb F_q}\overline{\mathbb F}_q,\mathbb Q_\ell) , {g^{-1}\frob{\F}}). \]

\end{definition}

\begin{remark}
We note that the map $^gf: {^gX} \to Y$ does not have a constant automorphism group $G$, instead it has the \'etale automorphism group scheme $^gG$ which ``\emph{is}" $G$ with the Frobenius acting by $h \to ghg^{-1}$.
\end{remark}

\section{On the palindromicity of various natural densities}\label{section: on the palindromicity}

This section contains the main theorems of this paper. We prove that various natural densities are palindromic. Before beginning the proofs, we establish some useful definitions and notations.

\begin{definition}\label{defn: sncd}[Simple normal crossings divisor]
Let $X/\Spec \B$ be a variety. We say that a divisor $D \subset X$ is a simple normal crossings divisor (sncd) relative to $\Spec \B$ if $X$ is smooth over $\Spec \B$ and
\begin{enumerate}
    \item $D = \bigcup_{i=1}^{r} D_i$ is a decomposition into geometrically irreducible components such that for any subset $I \subset \{1,\dots,r\}$, $D_I \coloneqq \bigcap_{i \in I}D_i$ is smooth over $\Spec \B$. We use the convention that $D_{\emptyset} = X$.
    \item For every $x \in D_I(\B)$, there exists a regular system of parameters $t_i, i\in I$ over $\B$ such that $D_i$ is cut out by $t_i$ in $\cO_{X,x}$.\footnote{i.e., $t_{j}$ is a non zero divisor in $\cO_{X,x}/(t_{i_1},\dots,t_{i_r})$ for $j \not\in \{i_1,\dots,i_r\} \subset I$ and $\cO_{X,x}/(t_{i_1},\dots,t_{i_r})$ is smooth over $\B$.}
\end{enumerate}
\end{definition}

\begin{definition}\label{defn: geometric sncd}[Geometric sncd]
Let $X/\Spec \A$ be a variety. We say that a divisor $D\subset X$ is a geometric sncd if there exists an unramified extension $\A \subset \mathcal O_{K'}$ and $D_{\mathcal O_{K'}} \subset X_{\mathcal O_{K'}}/\Spec \mathcal O_{K'}$ is a sncd relative to $\Spec \mathcal O_{K'}$. In this case, we write $D = \bigcup_{i=1}^{r}D_i$ with the $D_i$ irreducible over $\Spec \A$ (but possibly not geometrically irreducible). For each $i \leq r$, we define $\F_i$ to be the smallest unramified extension over which $D_{i,\mathcal O_{K_i}} = \bigcup_{j \in J_i}D_{ij.\mathcal O_{K_i}}$ with the $D_{ij}$ geometrically irreducible (and hence smooth). In this case, the Frobenius automorphism $\frob{\F}$ of $\F_i/\F$ will transitively permute the $D_{ij}$ and we define $\deg(\mathcal O_{K_i}/\A) = k_i$. We will consider the set $J_i = \{{1},\dots,{k_i}\}$ to be equipped with the action of the Frobenius $\frob{\F}$ (corresponding to the action on the divisors $D_{ij}$) and denote by $J_i/\frob{\F}^\ell$ the set of orbits for the action of $\frob{\F}^\ell$. At each closed point $x \in X$, we let $t_{i,x} \in \cO_{X,x}$ be a local function cutting out $D_i$ and similarly, we define $t_{ij,x} \in \cO_{X,x}\otimes_{\A}\mathcal O_{K'}$ cutting out $D_{ij}$ such that $\frob{\F}(t_{ij}) = t_{i\frob{\F}(j)}$.

\end{definition}

We note that $D \subset X$ being a geometric sncd is invariant under taking unramified twists as in Definition \ref{defn: unramified twist by an element} since this condition can be checked over the unramified closure.

\begin{hypothesis}
\label{hypothesis:res of sing}
A tuple $(g,X\to\Spec\A, Z \subset X)$ where $g \in \Aut(X)$ and $Z \subset X$ is a $g$-equivariant closed subscheme has an equivariant resolution if there exists a smooth variety $\widetilde{X} \to \Spec \A$ with an automorphism $g: \widetilde{X} \to \widetilde{X}$ and a proper, $g$-equivariant birational map $\pi: \widetilde{X} \to X$ such that $\pi^{-1}(Z)$ is a geometric sncd. 

\end{hypothesis}



This hypothesis need not be satisfied even if $g = \mathrm{id}, X = \mathbb P^1$ as can be seen by considering $D = V(x^2 - p^2y^2) \subset \mathbb P^1$. The minimal resolution of singularities is given in this example by blowing up $X$ at the closed point $V(t, p)$ and $\pi^{-1}(D)$ has a component purely supported in characteristic $p$ and hence is not a relative normal crossings divisor. Nevertheless, it is generally not a restrictive hypothesis.

\begin{remark}\label{rmk: char 0 resolutions}
If $f: X \to Y$ is generically finite with Galois group $G$ over (an open subset) of the ring of integers of a number field, then the above hypothesis for all the tuples $(g,X,Z_f)$ where $g \in G$ is simultaneously satisfied for all but finitely many $p$ since we can equivariantly resolve singularities over characteristic $0$ (for instance, see \cite{abramovich1996equivariant},\cite{encinas1998good} or \cite{bierstone1997canonical}) and \emph{spread out} to all but finitely many primes.
\end{remark}

Before proving the main theorems of this section, we first do a local computation. For any predicate $\mathscr P$ (such as ``$\gcd(k,m) = \ell$"), we define $\cI(\mathscr P)$ to be the indicator function that is $1$ when the predicate is satisfied and zero otherwise.

Since our divisors are not geometrically irreducible, our formulas depend on the divisibility properties of the size of the residue field of $\L$ and we use the indicator functions to concisely express this.

\begin{lemma}\label{lemma: local integrals}
Let $D \subset X$ be a geometric sncd as in Definition \ref{defn: geometric sncd} (with the same notation).  For any $e_1,\dots,e_r \in \mathbb Z_{\geq 0}$ and $M_1 \subset \{1,\dots,k_1\},\dots, M_r \subset \{1,\dots,k_r\}$, define $D_{M_i} = \bigcap_{j \in M_i}D_{ij}$ and let $x \in D_{M_1,\dots,M_r} \coloneqq \bigcap_{i=1}^rD_{M_i}$ be a point defined over an extension $\B \supset \A$.  We have the identity
\begin{align*}
    \rhol_{x,J_1,\dots,J_r}(\L) &\coloneqq \int_{\widehat\cO_{X,x}(\B)}|t_{1,x}|_{\L}^{e_1}\dots |t_{r,x}|_{\L}^{e_r}d\mu_{X_{\SS}}\\
    &= \sum_{\ell_1|k_1,\dots,\ell_r|k_r}\prod_{i=1}^r\cI(\gcd(k_i,m)=\ell_i)\left(\prod_{i=1}^r\rhol_{e_i}^{|M_i/\frob{\F}^{\ell_i}|}(m;q^{k_i/\ell_i})\right),
\end{align*}
where $m$ is the degree of the residue field of extension of $L/K$ and $\rhol_k(m;q)$ is as in Definition \ref{defn: rho, eta}.
\end{lemma}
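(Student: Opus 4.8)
The plan is to reduce this $p$-adic integral over the formal disc $\widehat\cO_{X,x}(\B) \cong \mathfrak m_\B^{\dim X}$ to a product of one-dimensional integrals, and then to track carefully how the (possibly reducible) components $D_i$ split over the residue field $\mathbb F_{q^m}$ of $\L$. First I would choose, as in Definition \ref{defn: geometric sncd}, a regular system of parameters at $x$: for each $i$ with $x \in D_{M_i}$, the functions $t_{ij,x}$ for $j \in M_i$ cut out the geometrically irreducible branches $D_{ij}$ passing through $x$ (these live over the unramified extension $\mathcal O_{K'}$), and together with additional coordinates they form a system of local parameters. Crucially $t_{i,x} = \prod_{j \in M_i} t_{ij,x}$ up to a unit when $D_i$ passes through $x$ (and $t_{i,x}$ is a unit otherwise, contributing a trivial factor), so $|t_{i,x}|_\L = \prod_{j\in M_i}|t_{ij,x}|_\L$. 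Thus the integrand factors as a monomial in the $|t_{ij,x}|_\L$, and by Fubini the integral over $\widehat\cO_{X,x}(\B)$ factors as a product over the branches (the extra coordinates not cutting out any $D_{ij}$ integrate to $1$ by our normalization $\mu_{\L^d}(\mathcal O_\B^d) = q^{md}$ per coordinate, or rather contribute the ``trivial'' factors making the total $|X(\mathbb F_{q^m})|$-type normalization work out — but since we are integrating over a single formal neighborhood these are just $1$).

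The heart of the matter is that the branches $\{t_{ij,x} : j \in M_i\}$ are only defined over $\mathcal O_{K'}$, not over $\B$, and Frobenius $\frob{\F}$ permutes them within each $M_i$ (it acts on $J_i$ transitively, hence on $M_i$ by the induced action — one should check $M_i$ is $\frob{\F}$-stable, or more precisely that the relevant orbit structure is what governs the answer; since $x$ is defined over $\B$, the $\frob{\L}$-action, i.e. $\frob{\F}^m$, must fix the set of branches through $x$, and this is exactly the set indexed by $M_i$). So next I would pass to the unramified extension $\L' = \L \cdot K'$ and integrate there, using the change-of-variables compatibility of the canonical measure under unramified base change; the orbits of $\frob{\L} = \frob{\F}^m$ acting on $M_i$ are what determine how many genuinely distinct $\B$-integrals we get. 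For a single $\frob{\F}^m$-orbit $\mathcal O$ in $M_i$ of size $|\mathcal O|$, the product $\prod_{j\in\mathcal O} t_{ij,x}$ descends to a single local parameter defined over the unramified extension of $\B$ of degree $|\mathcal O|$, whose residue field has size $q^{m|\mathcal O|}$ — wait, more carefully: the size of the orbit of a fixed $j$ under $\frob{\F}^m$ is $k_i/\gcd(k_i,m) = k_i/\ell_i$ when $\gcd(k_i,m) = \ell_i$, and there are $\ell_i$ such orbits in all of $J_i$, hence $|M_i/\frob{\F}^{\ell_i}|$ orbits meeting $M_i$ (using $\frob{\F}^{\ell_i}$ since $\langle\frob{\F}^m\rangle = \langle\frob{\F}^{\ell_i}\rangle$ as $\ell_i = \gcd(k_i,m)$). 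Each such orbit contributes a one-dimensional integral $\int_{\mathfrak m}|s|^{e_i} ds$ computed over the unramified extension with residue field of size $(q^{k_i/\ell_i})^{?}$ — and the formula says the parameter is $\rhol_{e_i}(m; q^{k_i/\ell_i})$, i.e. an integral over an extension whose base residue field is $q^{k_i/\ell_i}$ and with the further $m$-th power; this matches because the product of the $t_{ij,x}$ over an orbit is a norm from the degree-$(k_i/\ell_i)$ unramified extension of $K$, and its valuation behaves like a single uniformizer raised to the power $e_i$ on an extension with residue field $q^{k_i/\ell_i}$.

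Concretely, I would invoke Theorem \ref{thm: change of variables}(2) (or a direct computation) to write, for each $i$ and each $\frob{\F}^{\ell_i}$-orbit $\mathcal O \subset M_i$,
\[
\int |t_{i\mathcal O,x}|_\L^{e_i}\, = \rhol_{e_i}(m; q^{k_i/\ell_i}),
\]
where $t_{i\mathcal O,x} = \prod_{j\in\mathcal O}t_{ij,x}$; multiplying over the $|M_i/\frob{\F}^{\ell_i}|$ orbits gives the factor $\rhol_{e_i}^{|M_i/\frob{\F}^{\ell_i}|}(m; q^{k_i/\ell_i})$, and multiplying over $i$ gives the product in the statement. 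Finally, the case distinction: the pattern of which orbits occur depends only on $\ell_i = \gcd(k_i,m)$, so summing over the possible values $\ell_i \mid k_i$ with the indicator $\cI(\gcd(k_i,m) = \ell_i)$ assembles the uniform-in-$m$ formula. I expect the main obstacle to be bookkeeping the Frobenius orbit combinatorics correctly — in particular justifying that the set $M_i$ of branches through $x$ is a union of $\frob{\F}^m$-orbits and that the descent of $\prod_{j\in\mathcal O}t_{ij,x}$ over an orbit $\mathcal O$ really yields an integral of the shape $\rhol_{e_i}(m; q^{k_i/\ell_i})$ rather than, say, $\rhol_{e_i}(m|\mathcal O|/?; \dots)$ — and more mundanely, verifying that the extra (non-$D_{ij}$) local coordinates contribute exactly $1$ under our nonstandard normalization so that no spurious powers of $q$ appear.
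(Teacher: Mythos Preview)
Your proposal is correct and follows essentially the same approach as the paper. The paper makes your ``descent'' step precise by writing down a $\frob{\L}$-equivariant isomorphism $\widehat\cO_{X_{\mathcal O_{L'}},x}(\mathcal O_{L'}) \cong \prod_{i,j}\mathfrak m_{\L'} \times (\mathfrak m_{\L'})^v$ (using the $t_{ij}$ and auxiliary coordinates $s_1,\dots,s_v$) and then taking $\frob{\L}$-invariants to obtain $\widehat\cO_{X,x}(\B) \cong \prod_{i}\prod_{\lambda \in J_i/\frob{\L}}\mathfrak m_{\L_i} \times (\mathfrak m_\L)^v$, which is exactly your claim that each Frobenius orbit contributes a single coordinate valued in $\mathfrak m_{\L_i}$; the one-variable integral over each factor is then computed directly (your appeal to Theorem \ref{thm: change of variables}(2) is not quite the right tool, but ``direct computation'' is what the paper does and suffices).
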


We make the preliminary observation that $\frob{\L}$ action preserves the subsets $M_1,\dots,M_r$ since $x \in \bigcap_{i=1}^rD_{M_i}$.

\begin{proof}
Recall Definition \ref{defn: geometric sncd}, i.e., we have minimal unramified extensions $\A\subset \mathcal O_{K_i}$ of degree $k_i$ over which the $D_i = \bigcup_{j\in J_i}D_{ij}$ decompose into smooth components for $J_i = \{1,\dots,k_i\}$. We let $\L_i$ be the compositum of $\L$ with ${K_i}$ so that $\L \subset \L_i$ has degree $\ell_i = \gcd(m,k_i)$ and take  $\L'$ to be the compositum of all the $\L_i$. As before, we define local parameters $t_{ij}$ corresponding to the $D_{ij}$ with $\frob{\L}(t_{ij}) = t_{i\frob{\L}(j)}$ so that in particular, we have $t_i \coloneqq t_{i,x} = \prod_{j \in J_i}t_{ij}$. 

Consider the following map of $\B$ modules with a $\frob{\L}$ action \[\mathfrak m_{X,x} \to \mathfrak m_{X_{\mathcal O_{L'}},x}.\]
The preimage of the span of the $t_{ij}$  is a $\B$-submodule $\cL \subset \mathfrak m_{X,x}$ with the property that its generic rank is equal to its rank modulo the uniformizer $\pi_\L$ precisely because the $t_{ij}$ are a regular system of parameters and $\B \subset \mathcal O_{L'}$ is a faithfully flat extension. Therefore, we can extend $\cL$ to a spanning lattice $\cL'$ by adding in generators $s_1,\dots,s_v$.

Consider now the Frobenius equivariant isomorphism of free $\B$ modules 
\begin{equation}\label{eqn: co-ord isomorphism}
    \widehat\cO_{X_{\mathcal O_{L'}},x}(\mathcal O_{L'}) \xrightarrow{\sim} \prod_{i=1}^{r}\prod_{j \in J_i}\mathfrak m_{\L'}\times (\mathfrak m_{\L'})^v; P \to (t_{11}(P),t_{12}(P),\dots,t_{rk_r}(P),s_1(P),\dots,s_v(P)),
\end{equation} 
where the Frobenius $\frob{\L}$ acts on the right by
\[(x_{11}, x_{12},\dots, x_{rk_r}, y_1, \dots, y_v) \to (\frob{\L}(x_{1\frob{\L}^{-1}(1)}),\dots, \frob{\L}(x_{r\frob{\L}^{-1}(k_r)}), \frob{\L}(y_1), \dots, \frob{\L}(y_v)).\]
On taking $\frob{\L}$ invariants on both sides, we obtain
\[ \widehat\cO_{X,x}(\B) \cong \prod_{i=1}^r\prod_{\lambda \in J_i/\frob{\L}}\mathfrak m_{\L_i}\times (\mathfrak m_{\L})^v.\]
This isomorphism preserves the additive structure and is therefore measure preserving since the induced measure is the Haar measure with the correct normalization. All together, our integral thus reduces to
\begin{align*}
    \int_{\widehat{\cO}_{X,x}(\B)}|t_{1,x}|_{\L}^{e_1}\dots |t_{r,x}|_{\L}^{e_r}d\mu_{X_{\SS}} &= \int_{{\widehat{\cO}}_{X,x}(\B)}\prod_{i=1}^r\prod_{j=1}^{k_j}|t_{ij}|_{\L}^{e_i}d\mu_{X_{\SS}}\\
    &= \prod_{i=1}^r\prod_{\lambda \in M_i/\frob{\L}}\int_{\mathfrak m_{\L_i}}\left(\prod_{j \in \lambda}|t_{ij}|_{\L_i}^{e_i}\right)^{\ell_i/k_i}d\mu_{\mathfrak m_{\L_i}},
\end{align*}

where in the second equality, we use the above identification of our region of integration and the fact that $|t_{ij}| = 1$ over our region of integration unless $j \in M_i$. The functions $|t_{ij}|$ depend only on the $\frob{\L}$ orbit $\lambda$ of $j$ since $|\frob{\L}t_{ij}(P)| = |t_{ij}(P)|$ and we denote this function by $|t_{i\lambda}|$. Moreover, the size of the orbit $\lambda$ is exactly $k_i/\ell_i$ and orbits of $\frob{\L}$ are exactly the same same as the orbits of $\frob{\F}^{\ell_i}$ on the set $J_i$.

Therefore, the inner integrand simplifies to $|t_{i\lambda}|^{e_i}$ and the above integral is equal to
\[\prod_{i=1}^r\prod_{\lambda \in M_i/\frob{\F}^{\ell_i}}\int_{\mathfrak m_{\L_i}}|t_{i\lambda}|_{\L_i}^{e_i}d\mu_{\mathfrak m_{\L_i}} = \prod_{i=1}^{r}\rhol_{e_i}^{|M_i/\frob{\F}^{\ell_i}|}(m;q^{k_i/\ell_i})\]
since the residue field of $\mathcal O_{L_i}$ has size $q^{k_i/\ell_i}$.

Since we assumed that $\gcd(k_i,m) = \ell_i$, the proof is completed by summing over all the possibilities for the $\ell_i$ the corresponding indicator functions weighted by the above product.

\end{proof}

\begin{thm}\label{thm: symmetry for pullback}
Let $X,Y$ be \emph{smooth proper} varieties over $\Spec \A$ with $f: X \to Y$ a generically finite map with $(\mathrm{id},X,Z_f)$ satisfying Hypothesis \ref{hypothesis:res of sing}. The function
\[\L \to \eta_f(\L) = \int_{X(\B)} f_{\SS}^{*}(d\mu_{X_{\SS}})\]
is a palindromic form of weight $\dim Y$. In fact, if $m$ is the degree of the residue field extension of $L$ and we pick a resolution of singularities of $\widetilde\pi : \widetilde{X} \to X$ as in Hypothesis \ref{hypothesis:res of sing} with $\pi^{-1}(Z_f)$ playing the role of $D$ in Lemma \ref{lemma: local integrals} with the same notation as before, we have the identity
\[\eta_f(\L) = \sum_{\ell_1|k_1,\dots,\ell_r|k_r}\prod_{i=1}^r\cI(\gcd(m,k_i) = \ell_i)\left(\sum_{\cL }\prod_{i=1}^{r}\eta_{e_i}^{|\Lambda_i|}(m;q^{k_i/\ell_i})\left|D_{\cL}(\mathbb F_{q^m})\right|\right)\]
where the $e_i$ are positive integers and $\cL$ ranges over tuples of the form $(\Lambda_1,\dots,\Lambda_r)$ with $\Lambda_i \subset J_i/\frob{\F}^{\ell_i}$ and $D_\cL \coloneqq \bigcap_{i=1}^r\bigcap_{\lambda \in \Lambda_i}\bigcap_{j\in\lambda}D_{ij}$.
\end{thm}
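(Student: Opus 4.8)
The plan is to reduce the global integral $\eta_f(\L)$ to the local integrals computed in Lemma \ref{lemma: local integrals} via the change-of-variables formula for the resolution $\widetilde\pi\colon \widetilde X \to X$, and then to partition $X(\B)$ into congruence-neighborhoods on which the relevant functions are governed by the sncd structure of $\pi^{-1}(Z_f) = D = \bigcup_i D_i$. First I would use that $\widetilde\pi$ is proper and birational, so it induces a measure-preserving bijection $\widetilde X(\B) \xrightarrow{\sim} X(\B)$ away from a measure-zero set (by Theorem \ref{thm: change of variables}(1), applied to the exceptional locus of $\widetilde\pi$, which is a Zariski-closed subscheme of positive codimension, hence contributes nothing). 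Since the canonical measure $\mu_X$ is defined from gauge forms on a smooth proper compactification and is insensitive to proper birational modification (the transition functions being $p$-adic units), pulling back along $\widetilde\pi$ converts $\int_{X(\B)} f^*_\SS(d\mu_{X_\SS})$ into $\int_{\widetilde X(\B)} |\Jac(f\circ\widetilde\pi)|\, d\mu_{\widetilde X}$, and because $\widetilde\pi^{-1}(Z_f)$ is the support of the divisor of $\Jac(f\circ\widetilde\pi)$ with some multiplicities $e_i - 1$ along the $D_i$ (so $|\Jac(f\circ\widetilde\pi)| = \prod_i |t_{i,x}|^{e_i - 1}$ locally, after absorbing the relative-canonical discrepancy into the $e_i$; I will track the precise relation between these $e_i$ and discrepancies + ramification indices as a bookkeeping step), we get $\eta_f(\L) = \int_{\widetilde X(\B)} \prod_{i=1}^r |t_{i,x}|_\L^{e_i - 1}\, d\mu_{\widetilde X}$ — or rather the analogous integral whose integrand matches the $|t_{i,x}|^{e_i}$ in Lemma \ref{lemma: local integrals} after the standard reindexing $e_i \rightsquigarrow e_i - 1$ in the $\rhol$ vs.\ $\eta$ conventions.

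Next I would stratify $\widetilde X(\B) = \widetilde X(\L)$ according to which components $D_{ij}$ (over the unramified closure) a point lies on. Concretely, for each $\B$-point $x$ there is a unique tuple $\cL = (\Lambda_1, \dots, \Lambda_r)$ with $\Lambda_i \subset J_i/\frob{\F}^{\ell_i}$ recording exactly the $\frob{\L}$-orbits of components through the reduction $\bar x$; the locus of such $x$ is precisely the set of $\B$-points reducing into $D_\cL^\circ$, the open stratum of $D_\cL = \bigcap_i \bigcap_{\lambda\in\Lambda_i}\bigcap_{j\in\lambda} D_{ij}$ minus the deeper strata. The measure of the $\B$-points reducing to a fixed $\bar x \in D_\cL^\circ(\mathbb F_{q^m})$ is exactly $\rhol_{x,J_1,\dots,J_r}(\L)$ from Lemma \ref{lemma: local integrals} with $M_i$ taken to be the union of the orbits in $\Lambda_i$ — this is where the smoothness of $D_\cL$ and the "regular system of parameters" clause in Definition \ref{defn: sncd} are used, to guarantee that the complete local ring splits as a product of $\mathfrak m_{\L_i}$'s and a free part as in equation \eqref{eqn: co-ord isomorphism}. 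Summing over $\bar x$ in each stratum multiplies by $|D_\cL^\circ(\mathbb F_{q^m})|$, and then an inclusion–exclusion over the poset of strata (Möbius inversion on the face poset of the sncd, turning open-stratum counts into closed-stratum counts $|D_\cL(\mathbb F_{q^m})|$) rearranges the sum into the stated form, with the $\eta$'s appearing in place of $\rhol$'s because each $\eta_{e_i} = \rhol_{e_i} - 1$ absorbs the "not on $D_i$" contribution; the indicator functions $\cI(\gcd(m,k_i) = \ell_i)$ are inherited verbatim from Lemma \ref{lemma: local integrals}.

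Finally, palindromicity of weight $\dim Y$ follows formally from the closed form: by Lemma \ref{lemma: point counts have right weight} each $|D_\cL(\mathbb F_{q^m})|$ is a palindromic form of weight $\dim D_\cL$; the dimension count gives $\dim D_\cL = \dim X - \#\{(i,\lambda,j)\} = \dim Y - |\Lambda_1| - \dots - |\Lambda_r|$ when we remember $\dim X = \dim Y$ (as $f$ is generically finite) and that crossing one more component over $\mathbb F_{q^m}$ drops dimension by one per geometric component in the orbit — wait, more carefully: $D_\cL$ has codimension $\sum_i |\Lambda_i|$ in $\widetilde X$ over $\mathbb F_{q^m}$ after base change splits the orbits, but over $\mathbb F_{q^m}$ itself $D_\cL$ is still a variety of dimension $\dim Y - \sum_i |\Lambda_i|$ since each "orbit factor" $\bigcap_{j\in\lambda}D_{ij}$ cuts dimension by $|\lambda|$ geometrically yet contributes a single $\mathbb F_{q^m}$-variety; I will verify this dimension bookkeeping (it is exactly parallel to the computation in Lemma \ref{lemma: local integrals} of the exponent $|M_i/\frob{\F}^{\ell_i}|$). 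Granting it, each summand is a product of $\eta_{e_i}^{|\Lambda_i|}$ (a palindromic form of weight $\sum_i |\Lambda_i|$ by Example \ref{defn: rho, eta} and Remark \ref{rmk: palindromic forms closed under sums and products}) times $|D_\cL(\mathbb F_{q^m})|$ (weight $\dim Y - \sum_i |\Lambda_i|$), so the product has weight $\dim Y$; a sum of palindromic forms of the same weight is again one, and the indicator-function prefactors do not affect weight since for each fixed $m$ exactly one tuple $(\ell_1,\dots,\ell_r)$ is selected and the $m \to -m$ symmetry is checked on that branch.

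\textbf{Main obstacle.} The hard part will be the careful identification of the local measure contribution at a point lying on a \emph{deep} stratum $D_\cL$ with the $M_i$-version of Lemma \ref{lemma: local integrals} — i.e., matching the combinatorics of $\frob{\L}$-orbits $\Lambda_i \subset J_i/\frob{\F}^{\ell_i}$ of components actually passing through $\bar x$ against the orbit sets $M_i$ in the lemma, and then executing the inclusion–exclusion on the stratum poset correctly so that the $\rhol$-to-$\eta$ substitution is exactly bookkept (each component one is \emph{not} on contributes a factor $\rhol_{e_i} = 1 \cdot (\text{measure}) $ versus the $\eta_{e_i} = \rhol_{e_i} - 1$ that survives after inclusion–exclusion). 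Everything else — the change of variables, insensitivity of $\mu_X$ to the resolution, the weight computation — is routine given the tools already in the excerpt; the genuinely delicate point is keeping the non-geometrically-irreducible (twisted) components straight through both the local integral and the global Möbius inversion simultaneously, which is precisely the novelty the authors flag as "Lemma \ref{lemma: local integrals}" being the key.
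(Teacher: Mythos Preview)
Your approach is essentially the same as the paper's: reduce to the resolution, decompose $\widetilde X(\B)$ into residue discs indexed by $\mathbb F_{q^m}$-points, apply Lemma \ref{lemma: local integrals} on each disc, and then do the open-to-closed stratum inclusion--exclusion (the paper does this by expanding $(\eta_{e_i}+1)^{|M_i/\frob{\F}^{\ell_i}|}$ binomially and swapping summations). Two bookkeeping corrections you should make: (i) there is no ``$e_i-1$'' reindexing --- the $e_i$ are simply the vanishing orders of $\Jac(f\circ\widetilde\pi)$ along $D_i$, with the resolution's own discrepancy already absorbed into that Jacobian; (ii) in the weight count, $\eta_{e_i}(m;q^{k_i/\ell_i})$ has weight $k_i/\ell_i$ (not $1$), and correspondingly $\operatorname{codim} D_\cL = \sum_i |\Lambda_i|\cdot k_i/\ell_i$ since each orbit $\lambda$ consists of $k_i/\ell_i$ geometric components --- your two errors cancel, but each side is individually wrong. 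Finally, the indicator functions $\cI(\gcd(m,k_i)=\ell_i)$ must themselves be shown to be palindromic forms of weight $0$ (the paper writes them as $\mathbb Q$-combinations of sums of roots of unity, cf.\ Example \ref{ex: roots of untiy are palindromic forms} and equation \eqref{eqn: open stratification formula}); your ``check on the selected branch'' heuristic is not enough, because palindromicity is a statement about the \emph{algebraic} extension of the function to $m<0$, not a pointwise symmetry.
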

\begin{proof}

By Hypothesis \ref{hypothesis:res of sing} and the change of variables formula for birational maps, we can replace $X$ by a resolution of singularities $\widetilde{X}$ and hence suppose that the exceptional locus $Z_f \subset X$ is a normal crossings divisor relative to $\Spec \A$. Let $x \in X$ be a closed point. Since $X/\Spec \A$ is smooth, $\widehat\cO_{X,x} \cong \A[\![x_1,\dots,x_n]\!]$ for $n = \dim X$ which is a unique factorization domain by the Weierstrass preparation theorem.

Note that $\Jac(f)$ (as in Definition \ref{defn: Jacobian}) is a local function that precisely cuts out $Z_f$ geometrically, well defined up to a function that is invertible when evaluated on ${\mathcal O}_{\overline\F}$ points. If $Z_f = \bigcup_{i=1}^rD_i$ as in the statement of Lemma \ref{lemma: local integrals}, we have $\Jac(f) = u\prod_{i=1}^rt_{i,x}^{e_i} \in \widehat\cO_{X,x}$ for some $u,e_i \geq 0$ with the $u$ coprime to the $t_{i,x}$. In fact, the $e_i$ are exactly given by the order of vanishing of $\Jac(f)$ at the $D_i$ (computed on any local chart). This is well defined precisely because $\Jac(f)$ is well defined up to a unit on that chart. Due to this, we see that
$u = \Jac(f) \prod_i t_{i,x}^{-e_i}$ does not vanish away from $Z_f$ and also does not vanish at the generic point of each $D_i$. Therefore, it is non-vanishing on an open with complement of codimension at least $2$ and hence a unit in $\widehat{\cO}_{X,x}$.

Finally, recall the notation from before that $D_i = \bigcup_{j \in J_i}D_{ij}$ over an unramified extension $\mathcal O_{L_i}/\A$ with the $D_{ij}$ smooth (with $\mathcal O_{L_i}$ the minimal such extension). Given $\cM = (M_1,\dots,M_r)$ with the $M_i \subset J_i$, we define by $D_{\cM}^\circ$ the locally closed strata of $D_{\cM} = \bigcap_{j \in J_i} D_{ij}$ not contained in $D_{\cM'}$ for any other $\cM'$ corresponding to a smaller strata. We carry over other notation from Lemma \ref{lemma: local integrals} with $Z_f$ playing the role of $D$.

In order to compute the integral, we pick a set of points $\cS \subset X(\B)$ in bijection with the points of $X(\mathbb F_{q^m})$ under the reduction map. This gives rise to a decomposition of the $p$-adic analytic set $X(\B)$ into discs around each point $s \in \cS$ isomorphic to $\widehat{\cO}_{X,s}$, giving rise to the equality 
\begin{align*}
    \int_{X(\B)}f_{\SS}^{*}d\mu_{X_\SS} &= \sum_{\cM}\sum_{s \in \cS\cap D_{\cM}^\circ}\int_{\widehat\cO_{X,s}(\B)}|t_1^{e_1}\dots t_r^{e_r}|_{\L}d\mu_{X_\SS}.
\end{align*}
By Lemma \ref{lemma: local integrals}, the above is equal to
\[\sum_{\ell_1|k_1,\dots,\ell_r|k_r}\prod_{i=1}^r\cI(\gcd(m,k_i) = \ell_i)\left(\sum_{\cM}\sum_{s \in \cS \cap D_{\cM}^\circ}\prod_{i=1}^{r}(\eta_{e_i}(m;q^{k_i/\ell_i})+1)^{|M_i/\frob{\F}^{\ell_i}|}\right),\]
where we recall that $\delta_e(m;q) = \eta_{e}(m;q)+1$. Since $s \in D_\cM(\B)$, the $M_i$ are necessarily $\frob{L}$, and hence, $\frob{\F}^{\ell_i}$ stable. Next, we pass to the orbits by defining $\cL = (\Lambda_1,\dots,\Lambda_r)$ with $ \Lambda_i = M_i/\frob{K}^{\ell_i} \subset J_i/\frob{\F}^{\ell_i}$. Summing over such $\cL$ and noting that $|\cS \cap D_\cL^\circ| = D_\cL^\circ(\mathbb F_{q^m})$, we obtain
\[\sum_{\ell_1|k_1,\dots,\ell_r|k_r}\prod_{i=1}^r\cI(\gcd(m,k_i) = \ell_i)\left(\sum_{\cL \text{ as above}}\left|D^\circ_{\cL}(\mathbb F_{q^m})\right|\prod_{i=1}^{r}\prod_{\lambda \in \Lambda_i}\left(\eta_{e_i}(m;q^{k_i/\ell_i})+1\right) \right).\]
We now expand the inner product as
\[\prod_{\lambda \in \Lambda_i}\left(\eta_{e_i}(m;q^{k_i/\ell_i})+1\right) = \sum_{\Lambda_i' \subset \Lambda_i}\eta_{e_i}^{|\Lambda_i'|}(m;q^{k_i/\ell_i}).\]
We say that $\cL' = (\Lambda_1',\dots,\Lambda_r') \leq \cL$ when $\Lambda_i' \subset \Lambda_i$ for all $i$. Then, the above expansion lets us rewrite the integral as
\[\sum_{\ell_1|k_1,\dots,\ell_r|k_r}\prod_{i=1}^r\cI(\gcd(m,k_i) = \ell_i)\left(\sum_{\cL \text{ as above}}\sum_{\cL' \leq \cL}\prod_{i=1}^{r}\eta_{e_i}^{|\Lambda_i'|}(m;q^{k_i/\ell_i})|D^\circ_{\cL}(\mathbb F_{q^m})|\right).\]
Switching the order of summation, we have
\[\sum_{\ell_1|k_1,\dots,\ell_r|k_r}\prod_{i=1}^r\cI(\gcd(m,k_i) = \ell_i)\left(\sum_{\cL'}\left(\prod_{i=1}^{r}\left(\eta_{e_i}^{|\Lambda_i'|}(m;q^{k_i/\ell_i})\right) \sum_{\cL' \leq \cL}|D^\circ_{\cL}(\mathbb F_{q^m})|\right)\right).\]
The claim now follows from the observation that $\sum_{\cL' \leq \cL}|D^\circ_{\cL}(\mathbb F_{q^m})| = |D_{\cL}(\mathbb F_{q^m})|$ and is thus a palindromic form of weight $\dim D_{\cL}$ while the $\eta_e(m;q^d)$ factors are palindromic forms of weight $d$. Together, 
\[\prod_{i=1}^{r}\eta_{e_i}^{|\Lambda_i|}(m;q^{k_i/\ell_i})|D_{\cL}(\mathbb F_q)|\]
is a palindromic form of weight \[\sum_{i=1}^r|\Lambda_i|\frac{k_i}{\ell_i} + \dim D_\cL = \dim X\]
since the first term above is precisely the number of smooth components of $Z_f$ cutting out $D_\cL$ because each $\lambda \in \Lambda_i$ corresponds to an orbit of divisors of size $k_i/\ell_i$. 

The indicator functions $\cI(\ell|\gcd(m,k))$ (for $\ell|k$) can be written as a sum of roots of unity $\zeta_k$ of order $k$ as 
\[\cI(\ell|\gcd(m,k)) = \frac{1}{k}\sum_{a=1}^{k}\zeta_{k}^{amk/\ell}.\]
Therefore, they are palindromic forms of weight $0$. Since
\begin{equation}\label{eqn: open stratification formula}
    \cI(\gcd(m,k) = \ell) = \cI(\ell|\gcd(m,k)) - \left(1-\prod_{1 < d, d\ell|k}\left(1-\cI(d\ell|\gcd(m,k))\right)\right),
\end{equation}
$\cI(\gcd(m,k) = \ell)$ is also a palindromic form of weight $0$. All together, this shows that our integral is a palindromic form of weight $\dim Y = \dim X$.
\end{proof}

Despite appearances, the function $\eta_f$ does not depend on the resolution $\pi: \widetilde X \to X$ by the birational change of variables formula. We record the following simplification when all components of the exceptional divisor are geometrically irreducible over $\Spec \A$.

\begin{corollary}\label{cor: pullback formula in sncd case}
Let $m$ be the degree of the residue field extension of $L/K$. Let $f: X \to Y$ be as in the previous theorem and suppose that there exists a resolution of singularities $\pi: \widetilde{X} \to X$ over $\A$ with $\pi^{-1}(Z_f) = \bigcup_{i=1}^r D_i$ a sncd over $\A$. Then
\[\eta_f(\L) = \sum_{J \subset \{1,\dots,r\}}\left|D_{J}(\mathbb F_{q^m})\right|\prod_{j\in J}\eta_{e_j}(m;q)\]
for some positive integers $e_j$.
\end{corollary}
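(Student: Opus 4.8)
The plan is to derive Corollary~\ref{cor: pullback formula in sncd case} as the special case of Theorem~\ref{thm: symmetry for pullback} in which every irreducible component $D_i$ of $\pi^{-1}(Z_f)$ is already geometrically irreducible over $\Spec\A$. Concretely, this means $K_i = K$, so that $k_i = 1$ for all $i$, and therefore $J_i = \{1\}$ is a singleton with trivial Frobenius action. First I would observe that under these hypotheses the outer sum over tuples $(\ell_1,\dots,\ell_r)$ with $\ell_i\mid k_i$ collapses: the only choice is $\ell_i = 1$ for every $i$, and the indicator factor $\cI(\gcd(m,k_i)=\ell_i) = \cI(\gcd(m,1)=1) = 1$ is identically $1$. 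So the entire prefactor $\sum_{\ell_1\mid k_1,\dots}\prod_i \cI(\gcd(m,k_i)=\ell_i)$ disappears, leaving just the inner sum.

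Next I would unwind the inner sum. In the general formula $\cL$ ranges over tuples $(\Lambda_1,\dots,\Lambda_r)$ with $\Lambda_i \subset J_i/\frob{\F}^{\ell_i}$; since each $J_i/\frob{\F}^{1}$ is a one-element set, $\Lambda_i$ is either $\emptyset$ or that singleton, so the data of $\cL$ is precisely the data of the subset $J = \{\, i : \Lambda_i \neq \emptyset \,\} \subset \{1,\dots,r\}$. Under this identification $|\Lambda_i| = 1$ if $i \in J$ and $0$ otherwise, and $q^{k_i/\ell_i} = q^{1/1} = q$, so $\eta_{e_i}^{|\Lambda_i|}(m;q^{k_i/\ell_i})$ contributes the factor $\eta_{e_i}(m;q)$ exactly for $i \in J$ and $\eta_{e_i}^0 = 1$ otherwise; that is, $\prod_{i=1}^r \eta_{e_i}^{|\Lambda_i|}(m;q^{k_i/\ell_i}) = \prod_{j\in J}\eta_{e_j}(m;q)$. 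Likewise $D_\cL = \bigcap_{i=1}^r \bigcap_{\lambda\in\Lambda_i}\bigcap_{j\in\lambda} D_{ij}$ becomes simply $\bigcap_{j\in J} D_j = D_J$, since for each $i\in J$ the only $\lambda$ is the single orbit and the only $j$ in it is the unique geometric component, namely $D_i$ itself. Substituting all of this into the displayed identity of Theorem~\ref{thm: symmetry for pullback} yields
\[
\eta_f(\L) = \sum_{J\subset\{1,\dots,r\}} |D_J(\mathbb F_{q^m})| \prod_{j\in J}\eta_{e_j}(m;q),
\]
which is exactly the claimed formula, with the $e_j$ the orders of vanishing of $\Jac(f)$ along the $D_j$ as produced in the proof of the theorem.

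There is essentially no obstacle here: the corollary is a pure bookkeeping specialization once one checks that the hypothesis ``$\pi^{-1}(Z_f)$ is a sncd over $\A$'' (as opposed to merely a geometric sncd over some unramified extension) forces $k_i=1$ for all $i$. The only point deserving a sentence of care is that remark — that $D_i$ being geometrically irreducible and smooth over $\Spec\A$ is exactly the statement that the minimal splitting extension $K_i$ equals $K$, hence $k_i = 1$ — after which the index manipulations above are immediate. One could alternatively reprove the corollary from scratch by repeating the stratification-and-local-integral argument of Theorem~\ref{thm: symmetry for pullback} with all $k_i=1$, invoking Lemma~\ref{lemma: local integrals} directly, but deducing it as a corollary is cleaner and is presumably what is intended.
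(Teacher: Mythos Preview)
Your proposal is correct and matches the paper's approach exactly: the paper states the corollary without proof, introducing it simply as ``the following simplification when all components of the exceptional divisor are geometrically irreducible over $\Spec \A$,'' which is precisely the specialization $k_i=1$ that you carry out. Your bookkeeping reduction of the general formula in Theorem~\ref{thm: symmetry for pullback} to the displayed identity is accurate.
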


We prove a modification of the above theorem taking into consideration unramified twists, as in Definition \ref{defn: unramified twist by an element}.

Let $\tau = (H,gH)$ be an admissible pair, i.e., $gH=Hg$. As above, we take $f: X \to Y$ to be a generically finite, Galois map with Galois group $G$ over $\Spec \A$. Define $f_{H}: X \to X/H$ to be the natural quotient map. Since $gH = Hg$, $g$ descends to an automorphism of $X/H$.

We suppose that $(g,X/H,f_H(Z_f))$ satisfies Hypothesis \ref{hypothesis:res of sing} and define $\widetilde{X/H}$ to be a $g$-equivariant resolution
\[\pi_H: \widetilde{X/H} \to X/H.\]
Since all the maps involved are $g$-equivariant and twisting by $g$ is functorial, we obtain maps
\[\pi_\tau: {^g\left(\widetilde{X/H}_{\SS}\right)} \to {^g{(X/H)_{\SS}}}\]
where we base change to $\SS$ \emph{first} and then twist by $g$. We denote ${^g\left(\widetilde{X/H}_{\SS}\right)}$ by ${^\tau X_{\SS}}$ and the resulting map by ${^\tau f_{\SS}}: {^\tau X_{\SS}} \to Y$. By construction, ${^\tau f_{\SS}}$ is an \'etale map over the unramified open locus $\U{f}$.

\begin{thm}\label{thm: twist symmetry under pullback}
Let $f: X \to Y$ be a generically finite, Galois map with group $G$ between \emph{smooth, proper} varieties with $(g,X/H,f_H(Z_f))$ and $(g^{-1},X/H,f_H(Z_f))$ satisfying Hypothesis \ref{hypothesis:res of sing} as above and define
\[\L \to \eta_{\tau}(\L)\coloneqq \int_{{^{\tau^{-1}} X_{\SS}}(\B)}\Jac({^{\tau^{-1}} f_\B})d\mu_{{^{\tau^{-1}} X_{\SS}}}.\]
Then, $\eta_\tau(\L), \eta_{\tau^{-1}}(\L) \in \mathbb Q$ and $\eta_{\tau}(\L) + \eta_{\tau^{-1}}(\L)$ is a palindromic form of weight $\dim Y$.
\end{thm}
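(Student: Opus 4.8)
The plan is to run the proof of Theorem~\ref{thm: symmetry for pullback} on the twisted resolution ${}^{\tau^{-1}}X_\SS$ in place of $X$, with the point counts of divisor strata replaced by \emph{twisted} point counts; Lemma~\ref{lemma: twisted point counts have the right weight} is exactly what forces the combination $\eta_\tau(\L)+\eta_{\tau^{-1}}(\L)$ to be palindromic even though, by Remark~\ref{rmk: individual densities are not palindromic}, neither summand is. Using Hypothesis~\ref{hypothesis:res of sing} fix a $\langle g\rangle$-equivariant resolution $\pi_H\colon\widetilde{X/H}\to X/H$ so that $D\coloneqq\pi_H^{-1}(f_H(Z_f))$ is a geometric sncd; then $\Jac({}^{\tau^{-1}}f)$ has well-defined vanishing orders $e_i\ge 0$ along the components $D_i$ of (the twist of the base change of) $D$. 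Since being a geometric sncd is preserved by unramified base change and unramified twist (the remark after Definition~\ref{defn: geometric sncd}), Lemma~\ref{lemma: local integrals} applies to $D$ over $\B$, but now with the role of $\frob{\F}$ played by the \emph{twisted Frobenius} $\psi$ of ${}^{\tau^{-1}}X_\SS$ — which differs from $\frob{\L}$ by (the image of) $g^{-1}$, and which makes sense as an operator on the geometrically irreducible components of $D$ because $g$, being defined over the residue field, commutes with $\frob{\L}$. Decomposing ${}^{\tau^{-1}}X_\SS(\B)$ into residue discs around a set $\cS$ in bijection with ${}^{\tau^{-1}}X_\SS(\mathbb F_{q^m})=\mathrm{Fix}(\psi)$ and collapsing the $\cS$-sum into point counts exactly as in the proof of Theorem~\ref{thm: symmetry for pullback} yields an identity of the shape
\[
\eta_\tau(\L)=\sum_{\ell_i\mid k_i}\prod_i\cI(\gcd(m,k_i)=\ell_i)\sum_{\cL}\Bigl(\prod_i\eta_{e_i}^{|\Lambda_i|}(m;q^{k_i/\ell_i})\Bigr)\,N_\cL(\L),
\]
where $N_\cL(\L)$ is the number of $\psi$-fixed points on the ($\psi$-stable) stratum $D_\cL$ of $D$. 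As $N_\cL(\L)\in\mathbb Z$, this already gives $\eta_\tau(\L)\in\mathbb Q$, and $\eta_{\tau^{-1}}(\L)\in\mathbb Q$ by the same argument with the oppositely-twisted Frobenius $\psi'$ of ${}^\tau X_\SS$.

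Adding the two identities, I would reorganize the sums according to the orbits of the abelian group $\langle g,\frob{\L}\rangle$ on the geometrically irreducible components of $D$: over such an orbit the relevant stratum $Z$ is simultaneously $g$- and $\frob{\L}$-stable, hence defined over a finite extension $\mathbb F_{q^{m'}}$ on which $g$ acts as an automorphism $g'$ of finite order (a power of $g$), and the Grothendieck--Lefschetz trace formula identifies the corresponding contribution to $N_\cL(\L)+N'_\cL(\L)$ with
\[
\sum_{i}(-1)^i\tr\!\bigl((g'+g'^{-1})\,\frob{q}^{m'}\mid H^i_{\text{\'et}}(Z\times_{\mathbb F_q}\overline{\mathbb F}_q,\mathbb Q_\ell)\bigr),
\]
which is a palindromic form of weight $\dim Z$ by Lemma~\ref{lemma: twisted point counts have the right weight} (applied over $\mathbb F_{q^{m'}}$). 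Each factor $\eta_{e_i}(m;q^{k_i/\ell_i})$ is a palindromic form of weight $k_i/\ell_i$ (Example~\ref{defn: rho, eta}), the indicators $\cI(\gcd(m,k_i)=\ell_i)$ are palindromic of weight $0$, and — exactly as at the end of the proof of Theorem~\ref{thm: symmetry for pullback} — the weights of the $\eta$-factors in a given term sum to $\dim\widetilde{X/H}-\dim Z$, since each entry of $\Lambda_i$ records an orbit of $k_i/\ell_i$ components of $D$. Hence every term of $\eta_\tau(\L)+\eta_{\tau^{-1}}(\L)$ is a palindromic form of weight $\dim\widetilde{X/H}=\dim Y$, and so is the sum by Remark~\ref{rmk: palindromic forms closed under sums and products}.

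The substantive difficulty is the reorganization step. Since the components of $D$ are permuted \emph{both} by $g$ and by $\frob{\L}$, it is the composites $\psi$ and $\psi'$ (the two twisted Frobenii, differing from $\frob{\L}$ by $g^{-1}$ and by $g$) that act in ${}^{\tau^{-1}}X_\SS$ and ${}^\tau X_\SS$, and a stratum contributing to $\eta_\tau$ need not contribute to $\eta_{\tau^{-1}}$; so the pairing that produces the symmetric trace $g'+g'^{-1}$ above is not term-by-term, and has to be set up after passing to $\langle g,\frob{\L}\rangle$-orbits — equivalently, after base changing to an unramified extension $\mathcal O_{L'}$ that simultaneously splits the twist and turns $D$ into an honest sncd, running the explicit local integral there, and descending along $\psi$ and $\psi'$. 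The bookkeeping I expect to be fiddly is verifying that the automorphisms $g'$ one extracts are genuinely of finite order and defined over the field of definition of the corresponding stratum (so that Lemma~\ref{lemma: twisted point counts have the right weight} applies as stated), and that the orbit sizes feed into the weight count in precisely the way the numbers $k_i/\ell_i$ do in Theorem~\ref{thm: symmetry for pullback}. Everything else — the reduction to a resolution via the birational change of variables formula (Theorem~\ref{thm: change of variables}) and the purely $p$-adic local integral of Lemma~\ref{lemma: local integrals} — carries over verbatim from the untwisted case.
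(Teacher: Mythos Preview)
Your strategy coincides with the paper's: apply the explicit formula of Theorem~\ref{thm: symmetry for pullback} to the twisted resolution, add the results for $\tau$ and $\tau^{-1}$, and invoke Lemma~\ref{lemma: twisted point counts have the right weight} together with the Grothendieck--Lefschetz trace formula. The rationality claim $\eta_\tau(\L)\in\mathbb Q$ and the weight bookkeeping at the end are exactly as in the paper.

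The one place where you diverge is the pairing step, and there you work harder than the paper does. You propose to reorganize the two sums by $\langle g,\frob{\L}\rangle$-orbits on the geometric components, pass to a suitable extension, and extract an auxiliary automorphism $g'$ to which Lemma~\ref{lemma: twisted point counts have the right weight} can be applied. The paper instead keeps the entire combinatorial scaffolding --- the $D_i$, the $k_i$, the index sets $J_i$, the tuples $\cL$ --- fixed over $\A$ from the resolution $\widetilde{X/H}$, so that \emph{the same} strata $D_\cL$ appear in both formulas. Then it uses the standard identification of $(H^\ast_{\text{\'et}}({}^{g^{\pm 1}}D_{\cL,\SS}\times\overline{\mathbb F}_q,\mathbb Q_\ell),\frob{})$ with $(H^\ast_{\text{\'et}}(D_\cL\times\overline{\mathbb F}_q,\mathbb Q_\ell),g^{\mp 1}\frob{q})$ to write, term by term,
\[
|{}^gD_{\cL,\SS}(\mathbb F_{q^m})|+|{}^{g^{-1}}D_{\cL,\SS}(\mathbb F_{q^m})|=\rho_{D_\cL,g}(m),
\]
after which Lemma~\ref{lemma: twisted point counts have the right weight} applies directly with the pair $(D_\cL,g)$ over $\mathbb F_q$ --- no passage to orbits, no auxiliary $g'$, no intermediate base change. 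In other words, the ``substantive difficulty'' you anticipate is dissolved by anchoring the stratification data over $\A$ before base changing and twisting, rather than re-indexing separately on each twist; your orbit reorganization would also reach the goal but is more involved than what the argument actually requires.
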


\begin{proof}

We maintain the notation from the beginning of the proof of Theorem \ref{thm: symmetry for pullback} so that the exceptional locus for ${^\tau f}_{\SS}: {^\tau X}_{\SS} \to Y_{\SS}$ is $\bigcup_{i=1}^r {^gD}_{i,\SS}$ with the $D_i$ defined over $\Spec \A$ and irreducible, $D_i = \bigcup_{j \in J_i}D_{ij}$ and so on. By Theorem \ref{thm: symmetry for pullback}, 
\[\eta_{\tau^{-1}}(\L) = \sum_{\ell_1|k_1,\dots,\ell_r|k_r}\prod_{i=1}^r\cI(\gcd(m,k_i) = \ell_i)\left(\sum_{\cL }\prod_{i=1}^{r}\eta_{e_i}^{|\Lambda_i|}(m;q^{k_i/\ell_i})\left(|{^gD}_{\cL,\SS}(\mathbb F_{q^m})|\right)\right) \in \mathbb Q,\]
while $\eta_{\tau}(\L) + \eta_{\tau^{-1}}(\L)$ is equal to
\[\sum_{\ell_1|k_1,\dots,\ell_r|k_r}\prod_{i=1}^r\cI(\gcd(m,k_i) = \ell_i)\left(\sum_{\cL }\prod_{i=1}^{r}\eta_{e_i}^{|\Lambda_i|}(m;q^{k_i/\ell_i})\left(|{^gD}_{\cL,\SS}(\mathbb F_{q^m})| + |{^{g^{-1}}D}_{\cL,\SS}(\mathbb F_{q^m})|\right)\right).\]
It is possible (and indeed likely!) that even though $D_{\cL,\SS}$ has $\mathbb F_{q^m}$ points, the twist ${^gD}_{\cL,\SS}$ is not defined over $\B$ and consequently has no $\mathbb F_{q^m}$ points. We emphasize that the twists ${^gD}_{\cL,\SS}$ are obtained after base changing \emph{first}. Therefore, it is not obvious a-priori that the above expression is a palindromic form of weight $\dim Y$. Nevertheless, we will prove that it is so using the Lefschetz trace formula. 

To that end, fix an auxiliary prime $\ell$ and a smooth, projective variety $D/\mathbb F_q$. As is well known, the action of $\frob{\L}$ on $H_{\text{\'et}}^*({^{g^{-1}}}D\otimes_{\mathbb F_q}\overline{\mathbb F}_q,\mathbb Q_\ell)$ is equivalent to the action of $g\frob{\L}$ on $H_{\text{\'et}}^*(D\otimes_{\mathbb F_q}\overline{\mathbb F}_q,\mathbb Q_\ell)$. Thus, in conjunction with Grothendieck-Lefschetz trace formula, we obtain 
\[|{^{g^{-1}}D}_{\cL,\SS}(\mathbb F_{q^m})| = \sum_{i=0}^{2\dim D}(-1)^i\tr\left(g\frob{q}^m|H^i_{\text{\'et}}(D_{\cL}\times_{\mathbb F_q}\overline{\mathbb F}_q,\mathbb Q_\ell)\right),\]
and consequently,
\begin{align*}
    |{^gD}_{\cL,\SS}(\mathbb F_{q^m})|+|{^{g^{-1}}D}_{\cL,\SS}(\mathbb F_{q^m})| &= \sum_{i=0}^{2\dim D}(-1)^i\tr\left((g+g^{-1})\frob{q}^m|H^i_{\text{\'et}}(D_{\cL}\times_{\mathbb F_q}\overline{\mathbb F}_q,\mathbb Q_\ell)\right)\\
    &= \rho_{D_\cL,g}(m).
\end{align*} Thus, our integral above reduces to
\[\eta_{\tau}(\L) + \eta_{\tau^{-1}}(\L) = \sum_{\ell_1|k_1,\dots,\ell_r|k_r}\prod_{i=1}^r\cI(\gcd(m,k_i) = \ell_i)\left(\sum_{\cL }\prod_{i=1}^{r}\eta_{e_i}^{|\Lambda_i|}(m;q^{k_i/\ell_i})\rho_{D_\cL,g}(m)\right)\]
where $\rho_{D_\cL,g}$ is as in Lemma \ref{lemma: twisted point counts have the right weight}. By lemma, $\rho_{X,g}$ is a palindromic form of weight $\dim D_\cL$ and the proof is completed exactly as in Theorem \ref{thm: symmetry for pullback}.
\end{proof}

\begin{thm}\label{thm: incidence identity}
Let $f:X\to Y$ be a generically finite, Galois map with Galois group $G$. For an admissible pair $\tau = (H,gH)$, suppose that the tuple $(g^{-1},X/H,f_H(Z_f))$ satisfies Hypothesis \ref{hypothesis:res of sing}. Then, we have
\begin{equation}\label{eqn: incidence identity}
     {\eta_{\tau}(\L)} = \sum_{\tau' \leq \tau}\rhotype{\tau'}(\L)\gamma([\tau',\tau]),
\end{equation}
where $\rhotype{\tau}$ is as in Definition \ref{defn: rho_f,admissible pair} and $\gamma([\tau',\tau])$ are as in \S\ref{section: admissible pairs}.
\end{thm}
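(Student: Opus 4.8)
The plan is to evaluate the $p$-adic integral $\eta_\tau(\L)$ as an integral over $Y$ of the number of $\L$-rational points in the fibres of the twisted map ${}^{\tau^{-1}}f_\B$, and then to express that fibre count purely group-theoretically via the incidence function $\alpha$ of \S\ref{section: admissible pairs}. Since $Y$ and ${}^{\tau^{-1}}X_\SS$ are proper over $\Spec\B$, their $\L$-points coincide with their $\B$-points and carry the canonical measures. Let $Z \subset {}^{\tau^{-1}}X_\SS$ be the ramified locus of ${}^{\tau^{-1}}f_\B$, a proper closed subscheme; by Theorem \ref{thm: change of variables}(1) the set $Z(\L)$ has measure zero, so $\eta_\tau(\L)$ is computed over the open set ${}^{\tau^{-1}}X(\L) \setminus Z(\L)$, on which ${}^{\tau^{-1}}f_\B$ is a finite étale morphism to $\U{f}$ and hence induces a finite covering of $\L$-analytic manifolds over $\U{f}(\L)$ (now of possibly varying degree). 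Running the change-of-variables computation exactly as in the proof of Theorem \ref{thm: change of variables}(2) — the change of variables for a covering turns the integral of the norm of a pulled-back gauge form, namely $\int |\Jac|\,d\mu$, into the integral of the fibre cardinality — yields
\[\eta_\tau(\L) \;=\; \int_{\U{f}(\L)} N_\tau(P)\, d\mu_Y(P), \qquad N_\tau(P) := \#\big(({}^{\tau^{-1}}f_\B)^{-1}(P) \cap {}^{\tau^{-1}}X(\L)\big).\]
In particular $\eta_\tau(\L)$ does not depend on the resolution chosen to define ${}^{\tau^{-1}}X$, since the resolution only modifies the ramified locus.

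Next I would compute $N_\tau(P)$ for $P \in \U{f}(\L)$. A geometric point $Q_0$ over $P$ identifies the $G$-torsor $(f^{-1}P)(\overline\L)$ with $G$, on which $\Gal(\overline\L/\L)$ acts through the decomposition group; the inertia then acts through a subgroup $H_{\tau'}$ and a Frobenius lift through an element $g_{\tau'}$, so that $\tau' := (H_{\tau'}, g_{\tau'}H_{\tau'})$ is a representative of the admissible pair $\tau_P$. Writing $\tau = (H,gH)$, the geometric fibre of ${}^{\tau^{-1}}X = {}^{g^{-1}}(X/H)$ over $P$ is the coset space $H\backslash G$, on which the inertia acts through $H_{\tau'}$ and the twisted Frobenius acts by (translation by $g_{\tau'}$) followed by (the descent of the automorphism $g^{-1}$); this descent is defined on $H\backslash G$ precisely because the admissibility condition $gH = Hg$ holds. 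A coset gives an $\L$-point of ${}^{\tau^{-1}}X$ over $P$ exactly when it is fixed both by the inertia and by the twisted Frobenius, and unwinding these two conditions — each of which, again using $gH = Hg$, depends only on the coset — shows that together they are equivalent to $\lambda\tau'\lambda^{-1} \le \tau$ in the poset of admissible pairs. Hence $N_\tau(P) = \#\{\lambda \in H\backslash G : \lambda\tau'\lambda^{-1} \le \tau\} = \alpha([\tau',\tau])$, which by the conjugation-invariance of the defining condition of $\alpha$ recorded in \S\ref{section: admissible pairs} depends only on the conjugacy class $\tau_P$, and vanishes unless that class has a representative $\le\tau$.

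Finally I would stratify $\U{f}(\L)$ by the conjugacy class of $\tau_P$: since $\mu_Y\big(\{P \in \U{f}(\L) : \tau_P = [\tau']\}\big) = \mu_Y(\Utype{\tau'}(\L)) = \rhotype{\tau'}(\L)$, the integral above becomes $\eta_\tau(\L) = \sum_{[\tau']} \alpha([\tau',\tau])\,\rhotype{\tau'}(\L)$, the sum ranging over conjugacy classes of admissible pairs (only those with a representative $\le\tau$ contribute). To convert this into the stated sum over all $\tau'\le\tau$, I would use $\alpha = \gamma\cdot\beta$ together with the fact that $\beta([\tau',\tau])$ is exactly the number of admissible pairs conjugate to $\tau'$ that are $\le\tau$; since both $\rhotype{\tau'}$ and $\gamma([\tau',\tau])$ are constant on conjugacy classes, each class term $\alpha([\tau',\tau])\,\rhotype{\tau'}(\L)$ splits into its $\beta([\tau',\tau])$ equal constituents $\gamma([\tau',\tau])\,\rhotype{\tau'}(\L)$, one per representative $\le\tau$, and summing over all of them gives $\eta_\tau(\L) = \sum_{\tau'\le\tau}\rhotype{\tau'}(\L)\,\gamma([\tau',\tau])$. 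The main obstacle is the fibre-count step: one must track the left/right and inversion conventions in the decomposition action and in the unramified $g$-twist carefully enough to see that it is the twist by $g^{-1}$ (rather than by $g$) that makes $N_\tau(P)$ equal $\alpha([\,\cdot\,,\tau])$ rather than $\alpha([\,\cdot\,,\tau^{-1}])$, and that well-definedness of the twisted Frobenius on $H\backslash G$ is precisely the admissibility $gH = Hg$; the bookkeeping for the inertia and the choice of Frobenius lift is the fiddly part.
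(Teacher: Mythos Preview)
Your argument is correct and follows essentially the same route as the paper: you push $\eta_\tau(\L)$ forward along the \'etale locus of ${}^{\tau^{-1}}f_\B$ to an integral of the fibre count $N_\tau(P)$ over $\U{f}(\L)$, identify $N_\tau(P)$ with $\alpha([\tau_P,\tau])$ via the $G$-torsor description of the geometric fibre and the twisted Frobenius condition, and then pass from the sum over conjugacy classes to the sum over all $\tau'\le\tau$ using $\alpha=\gamma\cdot\beta$. The paper organizes the same computation by first determining the image and local degree of the covering map $f_\nu$ over each stratum $\Utype{\tau'}(\L)$ and then applying the change-of-variables formula stratum by stratum, but the content is identical.
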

\begin{proof}
Suppose $\tau = (H,gH)$ and for ease of notation, we let $k = g^{-1}$ with $\nu = \tau^{-1}$. Let $\widetilde{U} = {^k f_{\SS}}^{-1}(U_f), \widetilde{U}_\nu = {^k\left( f_{\SS}^{-1}(U_f)/H\right)}$ with quotient map $f_\nu: \widetilde{U}_\nu \to \U{f}$.

Suppose $P \in \U{f}(\L)$ with geometric pre-images $Q_1,\dots,Q_n$ in $X(\overline{\L}) = {^kX}(\overline{\L})$ (where the two sets are identified using the canonical identification with the only difference being the Frobenius action). For such a $Q_i$, we denote the corresponding inertia group and Frobenius coset in $G$ by $I_i$ and $\frob{i}$, respectively. We similarly define the group schemes ${^kI}_i$ and ${^k\frob{i}}$ in $^k G$. In fact, ${^kI}_i = I_i$ under the canonical identification of ${^kG}(\B^{\mathrm{ur}}) \cong G(\B^{\mathrm{ur}}) = G$ over the unramified closure of $\B$ since such a base change does not affect inertia groups. Finally, we let $\tau_{Q_i} = (I_i,\frob{i})$ be the corresponding admissible pair. 

Let $\overline{Q}_i$ be the image of $Q_i$ in $\widetilde{U}_\nu$. It lands in $\widetilde{U}_\nu(\L)$ precisely when both the inertia group and Frobenius fix it. Therefore
\begin{align*}
    \overline{Q}_i \in \widetilde{U}_\nu(\L) &\iff I_i\overline{Q}_i = \overline{Q}_i \text{ and } {^k\frob{i}}(\overline{Q}_i) = \overline{Q}_i\\ 
    &\iff I_i \subset H \text{ and } k\frob{i} \in H
\end{align*}
where the final equivalence follows from the faithfulness of the action of $G$ on $Q_1\dots,Q_n$. Altogether, we see that the image of $Q_i$ in ${^\nu X_{\SS}}$ is $\B$-rational precisely when $\tau_{Q_i} \leq \tau$, i.e., $I_i \subset H$ and $\frob{i} \in gH$. In other words, we have shown that $f_\nu$ is a covering space map with image 
\[f_\nu(\widetilde{U}_\nu(\L)) = \bigsqcup_{\substack{\tau' \leq \tau\\\text{ up to conjugacy }}}\Utype{\tau'}(\L).\]

For every such $P$ with some $i$ such that $\tau_i \leq \tau$, the $j$ such that $\tau_j \leq \tau$ correspond to $\lambda \in G$ such that $\lambda({^kI}_i,{^k\frob{i}})\lambda^{-1} \leq \tau$. The number of distinct images of these $Q_j$ in ${^\nu X_{\SS}}$ is precisely equal to $\alpha([({^kI}_i,{^k\frob{i}}),\tau])$ since the ones in the same $H$ orbit get identified (recall the definitions of $\alpha,\beta$ in Definition \ref{defn: alpha, beta}).

Therefore, the degree of $f_\nu$ over $\Utype{\tau'}(\L)$ is precisely $\alpha(\tau',\tau)$. If we pick a compact open $V \subset \Utype{\tau'}(\L)$ and a differential form $\omega$ on it, we have
\[\alpha(\tau',\tau)\int_{V}|\omega| = \int_{f_{\nu}^{-1}(V)}|{f_{\nu}}^*\omega|\] 
by Theorem \ref{thm: change of variables}. We pick $\omega$ to be a gauge form with respect to the measure $\mu_Y$ and sum over a disjoint union cover $V_i$ of $\bigsqcup_{\tau'\leq \tau}\Utype{\tau'}(\L)$ to obtain the required identity
\[\sum_{\tau'\leq \tau}\rhotype{\tau'}(\L)\frac{\alpha(\tau',\tau)}{\beta(\tau',\tau)} = \int_{\widetilde{U}_\nu(\L)}|\Jac(f_{\nu})|d\mu_{X_{\SS}}.\]
We divide by $\beta(\tau',\tau)$ in the first term because that is precisely the amount we over-count by when we sum over all types $\tau' \leq \tau$ instead of up to conjugacy. 
\end{proof}

\begin{thm}\label{thm: Main theorem, density fixing inertia and frobenius coset}
Let $f:X\to Y$ be a generically finite, Galois map with Galois group $G$ between \emph{smooth, proper} varieties and $\tau$ an admissible pair (with respect to $G$). For every admissible pair $\tau' \leq \tau$ and $\tau' \leq \tau^{-1}$, suppose that the tuples $(g_{\tau'},X/H_{\tau'},f_{H_{\tau'}}(Z_f))$ satisfy Hypothesis \ref{hypothesis:res of sing}. Then, the densities $\rhotype{\tau}(\L) \in \mathbb Q$ and the function
\[\L \to \rhotype{\tau}(\L) + \rhotype{\tau^{-1}}(\L)\]
is a palindromic form of weight $\dim Y$, where $\rhotype{\tau}$ is as in Definition \ref{defn: rho_f,admissible pair}. We note that the above function is the measure of the set $\Utype{\tau}(\L) \cup \Utype{\tau^{-1}}(\L)$ since these are disjoint subsets of $Y(\L)$.
\end{thm}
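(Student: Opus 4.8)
The proof will be pure bookkeeping in the incidence algebra $\cI_\cQ$ of a suitable finite subposet $\cQ$ of the poset $\cP$ of admissible pairs, with Theorem \ref{thm: incidence identity} and Theorem \ref{thm: twist symmetry under pullback} as the only two inputs. Put
\[\cQ = \{\sigma \in \cP : \sigma \leq \tau \text{ or } \sigma \leq \tau^{-1}\}.\]
Since inversion $\sigma \mapsto \sigma^{-1}$ is an automorphism of $\cP$ carrying $\tau$ to $\tau^{-1}$, it restricts to an involution of $\cQ$; in particular, for every $\sigma \in \cQ$ we have $\sigma^{-1} \in \cQ$, so $\sigma \leq \tau$ or $\sigma \leq \tau^{-1}$ \emph{and} $\sigma^{-1} \leq \tau$ or $\sigma^{-1} \leq \tau^{-1}$. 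Hence the hypothesis of the theorem (Hypothesis \ref{hypothesis:res of sing} for $(g_{\tau'},X/H_{\tau'},f_{H_{\tau'}}(Z_f))$ at all $\tau' \leq \tau$ and all $\tau' \leq \tau^{-1}$) provides equivariant resolutions for both $(g_\sigma,X/H_\sigma,f_{H_\sigma}(Z_f))$ and $(g_\sigma^{-1},X/H_\sigma,f_{H_\sigma}(Z_f))$ for every $\sigma \in \cQ$. Consequently Theorem \ref{thm: twist symmetry under pullback} applies to each $\sigma\in\cQ$, giving $\eta_\sigma(\L),\eta_{\sigma^{-1}}(\L)\in\mathbb Q$ and $\eta_\sigma(\L)+\eta_{\sigma^{-1}}(\L)$ a palindromic form of weight $\dim Y$ depending on $\L$ only through the residue degree $m$; and Theorem \ref{thm: incidence identity} applies to each $\sigma\in\cQ$, giving $\eta_\sigma(\L) = \sum_{\sigma'\leq\sigma}\rhotype{\sigma'}(\L)\,\gamma([\sigma',\sigma])$.

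\textbf{Inversion in the incidence algebra.} Regard $E\colon\sigma\mapsto\eta_\sigma(\L)$ and $R\colon\sigma\mapsto\rhotype{\sigma}(\L)$ as functions $\cQ\to\mathbb Q$; these form a right module over $\cI_\cQ$ via $(F\ast\kappa)(\sigma)=\sum_{\sigma'\leq\sigma}F(\sigma')\kappa([\sigma',\sigma])$, and this action is associative in the obvious sense. Theorem \ref{thm: incidence identity} now reads $E = R\ast\gamma$. By Definition \ref{defn: alpha, beta} we have $\gamma([\sigma,\sigma])>0$ for every $\sigma$, so $\gamma$ is a unit in $\cI_\cQ$ over $\mathbb Q$; let $\gamma^{-1}\in\cI_\cQ$ be its inverse, which again has values in $\mathbb Q$. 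Then $R = E\ast\gamma^{-1}$, i.e.
\[\rhotype{\tau}(\L) = \sum_{\tau'\leq\tau}\eta_{\tau'}(\L)\,\gamma^{-1}([\tau',\tau]),\]
and likewise for $\tau^{-1}$. Since each $\eta_{\tau'}(\L)\in\mathbb Q$, this already shows $\rhotype{\tau}(\L),\rhotype{\tau^{-1}}(\L)\in\mathbb Q$.

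\textbf{Palindromicity of the sum.} Here I would use the involution. First, $\alpha$ and $\beta$ (hence $\gamma$) are invariant under inversion of the poset: for $\sigma'\leq\sigma$ one has $\lambda\sigma'^{-1}\lambda^{-1}=(\lambda\sigma'\lambda^{-1})^{-1}$, and $\mu\leq\nu \iff \mu^{-1}\leq\nu^{-1}$, so the sets counted by $\alpha([\sigma'^{-1},\sigma^{-1}])$ and $\beta([\sigma'^{-1},\sigma^{-1}])$ are, element by element, the inverses of those counted by $\alpha([\sigma',\sigma])$ and $\beta([\sigma',\sigma])$. Thus inversion induces a ring automorphism of $\cI_\cQ$ fixing $\gamma$, and therefore fixing $\gamma^{-1}$: $\gamma^{-1}([\tau'^{-1},\tau^{-1}])=\gamma^{-1}([\tau',\tau])$. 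Reindexing the sum for $\rhotype{\tau^{-1}}(\L)$ along the bijection $\{\tau'\leq\tau\}\to\{\tau'\leq\tau^{-1}\}$, $\tau'\mapsto\tau'^{-1}$, yields
\[\rhotype{\tau}(\L)+\rhotype{\tau^{-1}}(\L)=\sum_{\tau'\leq\tau}\bigl(\eta_{\tau'}(\L)+\eta_{\tau'^{-1}}(\L)\bigr)\,\gamma^{-1}([\tau',\tau]).\]
Each summand $\eta_{\tau'}(\L)+\eta_{\tau'^{-1}}(\L)$ is a palindromic form of weight $\dim Y$ in $m$ by Theorem \ref{thm: twist symmetry under pullback}, and $\gamma^{-1}([\tau',\tau])\in\mathbb Q$ is independent of $\L$; since palindromic forms of a fixed weight are closed under $\mathbb Q$-linear combinations (Remark \ref{rmk: palindromic forms closed under sums and products}, together with the observation that a rational constant is palindromic of weight $0$, and Definition \ref{defn: palindromic forms as a function of rings}), the right-hand side is a palindromic form of weight $\dim Y$, as required.

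\textbf{Main obstacle.} The delicate point is making the last paragraph's matching precise: one must verify that the inversion involution on $\cQ$ commutes with the incidence-algebra element $\gamma$ (equivalently, with the combinatorial recipes defining $\alpha$ and $\beta$), so that the pairing $\tau'\leftrightarrow\tau'^{-1}$ aligns exactly with the only symmetry we actually control, namely the palindromicity of the \emph{sums} $\eta_{\tau'}(\L)+\eta_{\tau'^{-1}}(\L)$ and not of the individual $\eta_{\tau'}(\L)$; by Remark \ref{rmk: individual densities are not palindromic} the individual quantities genuinely fail to be palindromic, so this matching is essential rather than cosmetic. A secondary point to watch is that Theorems \ref{thm: incidence identity} and \ref{thm: twist symmetry under pullback} require Hypothesis \ref{hypothesis:res of sing} for $g_\sigma$ \emph{and} $g_\sigma^{-1}$ at every $\sigma$ in the whole down-set $\cQ$, which is precisely why the theorem hypothesizes resolutions at all $\tau'\leq\tau$ and all $\tau'\leq\tau^{-1}$.
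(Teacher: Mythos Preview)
Your proof is correct and follows essentially the same approach as the paper: invert the relation $\eta_\sigma = \sum_{\sigma'\leq\sigma}\rhotype{\sigma'}\,\gamma([\sigma',\sigma])$ from Theorem \ref{thm: incidence identity} in the incidence algebra, observe that $\gamma$ (and hence $\gamma^{-1}$) is invariant under the inversion automorphism of the poset, pair the resulting formulas for $\rhotype{\tau}$ and $\rhotype{\tau^{-1}}$ term by term, and conclude palindromicity from Theorem \ref{thm: twist symmetry under pullback}. Your packaging via the module action of $\cI_\cQ$ on functions on the down-set $\cQ$ is slightly different from the paper's direct use of elements $\widetilde\rho,\widetilde\eta$ of the incidence algebra, but the content is identical; you also spell out more carefully why the assumed resolutions at $\tau'\leq\tau$ and $\tau'\leq\tau^{-1}$ suffice to cover both $g_\sigma$ and $g_\sigma^{-1}$ at every $\sigma\in\cQ$.
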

\begin{proof}

We fix $\L$ and elements $\widetilde{\eta},\widetilde{\rho}$ in the incidence algebra for the poset of types satisfying (for all $g \in G$ and types $\tau' \leq \tau$) 
\[\widetilde{\rho}([(e,g),\tau]) = \rhotype{\tau}(\L),\]
\[\widetilde{\eta}([\tau',\tau]) = \widetilde\rho\ast\gamma([\tau',\tau]).\]
By Equation \ref{eqn: incidence identity}, we see that $\widetilde{\eta}([(e,g),\tau]) = \eta_\tau(\L)$. Since $\gamma([\tau,\tau]) > 0$, $\gamma$ is an invertible element of the incidence algebra so that
\[\widetilde\rho = \widetilde\eta\ast\gamma^{-1}\]
and evaluating at $[(e,g),\tau]$ for some type $\tau \geq (e,g)$, we obtain
\[\rhotype{\tau}(\L) = \sum_{\tau'\leq \tau}{\eta_{\tau'}(\L)}\gamma^{-1}([\tau',\tau]).\]
This is a rational number since $\eta_{\tau'}(\L) \in \mathbb Q$ (Theorem \ref{thm: twist symmetry under pullback}) and $\gamma([\tau',\tau]) \in \mathbb Q$ (and hence also its inverse in the incidence algebra). By definition, we have $\alpha([\tau',\tau]) = \alpha([\tau'^{-1},\tau^{-1}])$ and similarly for $\beta$ and hence also for $\gamma$ and $\gamma^{-1}$. Therefore, if we sum the above identity with the corresponding one for $\tau^{-1}$, we obtain
\begin{align*}
    \rhotype{\tau}(\L) + \rhotype{\tau^{-1}}(\L) &= \sum_{\tau'\leq \tau}{\eta_{\tau'}(\L)} \gamma^{-1}([\tau',\tau]) + \sum_{\tau'^{-1}\leq \tau^{-1}}{\eta_{\tau'^{-1}}(\L)}\gamma^{-1}([\tau'^{-1},\tau^{-1}])\\
    &=\sum_{\tau'\leq \tau}\left(\eta_{\tau'}(\L) + \eta_{\tau'^{-1}}(\L)\right){\gamma^{-1}([\tau',\tau])}.
\end{align*}
By Lemma \ref{lemma: twisted point counts have the right weight}, we see that $\eta_{\tau'}(\L) + \eta_{\tau'^{-1}}(\L)$ is a palindromic form of weight $\dim Y$ and since the poset of admissible pairs and $\gamma^{-1}$ depend only on the group $G$ and not on $\L$, the above identity shows that $\rhotype{\tau}(\L) + \rhotype{\tau^{-1}}(\L)$ is a palindromic form of weight $\dim Y$.
\end{proof}

As an immediate corollary, we have that the measures in Definition \ref{defn: rho_f,decomp,inertia} take rational values and are palindromic forms of the correct weight.

\begin{corollary}\label{cor: symmetry for fixed inertia, decomp}
Let $f: X \to Y$ be a generically finite, Galois map between \emph{smooth, proper} varieties. For every admissible pair $\tau' \leq \tau$ and $\tau' \leq \tau^{-1}$, suppose that the tuples $(g_{\tau'},X/H_{\tau'},f_{H_{\tau'}}(Z_f))$ satisfy Hypothesis \ref{hypothesis:res of sing}. 

Then the measures $\rhoid{\tau}(\L)$ are palindromic forms of weight $\dim Y$.
\end{corollary}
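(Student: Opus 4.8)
The plan is to reduce this to Theorem~\ref{thm: Main theorem, density fixing inertia and frobenius coset} by refining the coarse stratum $\Uid{\tau}(\L)$ into the finer strata $\Utype{\tau'}(\L)$ of Definition~\ref{defn: rho_f,admissible pair}. First I would unwind the condition $(H_\tau,H_{1,\tau})\in(I_P,D_P)$ that defines $\Uid{\tau}(\L)$: it holds exactly when, for a suitable geometric lift $Q$ of $P$, the pair $(I_Q,D_Q)$ is $G$-conjugate to $(H_\tau,H_{1,\tau})$, and after performing that conjugation the Frobenius coset $\frob{Q}$ becomes some distinguished generator $gH_\tau$ of the cyclic group $C_\tau=H_{1,\tau}/H_\tau$, so that $\tau_P$ contains the admissible pair $\tau'=(H_\tau,gH_\tau)$. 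Conversely any admissible pair $(H_\tau,gH_\tau)$ with $g$ a generator of $C_\tau$ has decomposition subgroup $\langle H_\tau,g\rangle=H_{1,\tau}$, so $\Utype{(H_\tau,gH_\tau)}(\L)\subseteq\Uid{\tau}(\L)$; and since $\tau_P$ is a single conjugacy class, the strata attached to pairwise non-conjugate admissible pairs are disjoint. Hence
\[
\rhoid{\tau}(\L)=\sum_{[\tau']}\rhotype{\tau'}(\L),
\]
the finite sum over $G$-conjugacy classes of admissible pairs $\tau'$ with $H_{\tau'}=H_\tau$ and $H_{1,\tau'}=H_{1,\tau}$ (recall $\rhotype{\tau'}$ depends only on the conjugacy class of $\tau'$). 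Because $(H_\tau,gH_\tau)^{-1}=(H_\tau,g^{-1}H_\tau)$, because $g^{-1}$ generates $C_\tau$ whenever $g$ does, and because inversion commutes with $G$-conjugation (\S\ref{section: admissible pairs}), this index set is stable under $[\tau']\mapsto[\tau'^{-1}]$.

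Next I would check that Theorem~\ref{thm: Main theorem, density fixing inertia and frobenius coset} applies to each such refinement $\tau'$, i.e. that $(g_{\tau''},X/H_{\tau''},f_{H_{\tau''}}(Z_f))$ satisfies Hypothesis~\ref{hypothesis:res of sing} for every $\tau''\le\tau'$ and every $\tau''\le\tau'^{-1}$. The point that makes this go through is that whether that tuple admits an equivariant resolution depends on $\tau''$ only through the pair $(H_{\tau''},H_{1,\tau''})$: a $g_{\tau''}$-equivariant resolution of $X/H_{\tau''}$ is the same as one equivariant for the full cyclic subgroup of $\Aut(X/H_{\tau''})$ generated by the image of $g_{\tau''}$, and all generators of $H_{1,\tau''}/H_{\tau''}$ generate the same such subgroup. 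Now if $\tau''\le\tau'$ (resp. $\tau''\le\tau'^{-1}$) then $H_{\tau''}\subseteq H_\tau$ and $g_{\tau''}$ lies in the coset $gH_\tau$ (resp. $g^{-1}H_\tau$); replacing $g_{\tau''}$ by a power $g_{\tau''}^{\,j}$ with $j$ coprime to the order of $g_{\tau''}$ modulo $H_{\tau''}$ and congruent modulo $|C_\tau|$ to the appropriate residue — possible because $|C_\tau|$ divides that order — produces an admissible pair $\widehat\tau''$ with $(H_{\widehat\tau''},H_{1,\widehat\tau''})=(H_{\tau''},H_{1,\tau''})$ and $\widehat\tau''\le\tau$ (resp. $\widehat\tau''\le\tau^{-1}$). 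The hypothesis assumed in the corollary supplies Hypothesis~\ref{hypothesis:res of sing} for $\widehat\tau''$, hence for $\tau''$. So Theorem~\ref{thm: Main theorem, density fixing inertia and frobenius coset} gives $\rhotype{\tau'}(\L)\in\mathbb Q$ and that $\rhotype{\tau'}(\L)+\rhotype{\tau'^{-1}}(\L)$ is a palindromic form of weight $\dim Y$.

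To finish, I would use inversion-stability of the index set to write $\sum_{[\tau']}\rhotype{\tau'^{-1}}(\L)=\rhoid{\tau}(\L)$ and hence
\[
\rhoid{\tau}(\L)=\frac{1}{2}\sum_{[\tau']}\bigl(\rhotype{\tau'}(\L)+\rhotype{\tau'^{-1}}(\L)\bigr),
\]
which exhibits $\rhoid{\tau}(\L)$ as a $\mathbb Q$-linear combination of palindromic forms of weight $\dim Y$, hence a palindromic form of weight $\dim Y$ by Remark~\ref{rmk: palindromic forms closed under sums and products}; rationality is automatic since each $\rhotype{\tau'}(\L)\in\mathbb Q$.

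The step I expect to be the main obstacle is the hypothesis bookkeeping in the middle paragraph — verifying that the resolution hypothesis genuinely propagates from $\tau$ and $\tau^{-1}$ to every refinement $\tau'$ of $(H_\tau,H_{1,\tau})$ and to their sub-pairs. All the geometry is already packaged into Theorem~\ref{thm: Main theorem, density fixing inertia and frobenius coset}; the only new ingredient is the group-theoretic identification of $\Uid{\tau}(\L)$ with the disjoint union of the $\Utype{\tau'}(\L)$, together with the fact that that family is closed under inversion.
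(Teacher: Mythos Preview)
Your proposal is correct and follows essentially the same approach as the paper: decompose $\Uid{\tau}(\L)$ as a disjoint union of the $\Utype{\tau_\theta}(\L)$ over generators $\theta$ of $C_\tau$, observe that this index set is stable under inversion, and average to reduce to Theorem~\ref{thm: Main theorem, density fixing inertia and frobenius coset}. Your middle paragraph is in fact more careful than the paper, which simply invokes ``the previous theorem'' without verifying that its hypotheses propagate from $\tau,\tau^{-1}$ to each $\tau_\theta$; your observation that Hypothesis~\ref{hypothesis:res of sing} for $(g_{\tau''},X/H_{\tau''},\dots)$ depends only on $(H_{\tau''},H_{1,\tau''})$---because a $g$-equivariant resolution is automatically equivariant for the cyclic group $g$ generates---together with the elementary lifting of generators modulo $|C_\tau|$, is exactly what is needed to close that gap.
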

\begin{proof}

For a fixed normal inclusion $H \subset H_1$ with the quotient cyclic, we obtain an admissible pair $\tau_{\theta} = (H, \theta H)$ for any generator $\theta \in H_1/H$. The corresponding $\Utype{\tau_\theta}(\L)$ are disjoint and moreover, 
\[\Uid{\tau}(\L) = \bigsqcup_\theta \Utype{\tau_\theta}(\L) \implies \rhoid{\tau}(\L) = \sum_{\theta \text{ a generator}} \rhotype{\tau_\theta}(\L).\]
so that $2\rhoid{\tau}(\L) = \sum_{\theta \text{ a generator}} \left(\rhotype{\tau_\theta}(\L) + \rhotype{\tau^{-1}_\theta}(\L)\right)$ is a palindromic form of weight $\dim Y$ by the previous theorem and hence, so is $\rhoid{\tau}(\L)$.
\end{proof}

As the next theorem shows, the hypothesis on the resolution of singularities above are automatically satisfied at almost all primes.

\begin{thm}\label{thm: main theorem for almost all primes}
Let $M$ be a number field and $T = \Spec \mathcal O_M$. Let $f:X\to Y$ over $T$ be a generically finite, Galois map with $Y_M$ a smooth proper variety over $M$. 

Then, for almost all primes $\mathfrak p$ and every admissible pair $\tau$, the function
\[\L \to \rhotype{\tau}(\L) + \rhotype{\tau^{-1}}(\L)\]
is a palindromic form of weight $\dim Y$, as $\L$ ranges over extensions of the completion $\widehat{\mathcal O}_{M,\mathfrak p}$.
\end{thm}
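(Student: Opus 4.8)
The plan is to deduce Theorem \ref{thm: main theorem for almost all primes} from Theorem \ref{thm: Main theorem, density fixing inertia and frobenius coset} by verifying its hypothesis — namely that for every admissible pair $\tau'$ the tuple $(g_{\tau'}, X/H_{\tau'}, f_{H_{\tau'}}(Z_f))$ satisfies Hypothesis \ref{hypothesis:res of sing} — for all but finitely many primes $\mathfrak p$ of $\mathcal O_M$ simultaneously. Since $G$ is finite, there are only finitely many admissible pairs $\tau'$, so it suffices to handle each one and take the union of the finite bad sets; and in fact it suffices to handle, for each subgroup $H \subset G$ and each element $g$ normalizing $H$, the single tuple $(g, X/H, f_H(Z_f))$, again a finite list. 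So I would fix one such $(H,g)$ and argue that the equivariant resolution exists at almost all primes.

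First I would work over the generic fiber: over the number field $M$ (or a suitable finite extension where $g$ is defined and where the components become as controlled as we need — but since $G$ is a constant group scheme this is already fine over $M$), the variety $X_M$ is smooth and proper, $H$ acts on it, and $g$ induces an automorphism of the quotient $(X/H)_M$. By equivariant resolution of singularities in characteristic $0$ (\cite{abramovich1996equivariant}, \cite{encinas1998good}, \cite{bierstone1997canonical}), applied to the $\langle g\rangle$-action on $(X/H)_M$ together with the closed subscheme $f_H(Z_f)_M$, there is a proper birational $\langle g\rangle$-equivariant morphism $\pi_M : \widetilde{W}_M \to (X/H)_M$ with $\widetilde{W}_M$ smooth and $\pi_M^{-1}(f_H(Z_f)_M)$ a simple normal crossings divisor on $\widetilde{W}_M$ whose components (and all their intersections) are smooth over $M$ — possibly after passing to a finite unramified-at-$\mathfrak p$ base field to make the components geometrically irreducible, which is exactly the flexibility allowed by Definition \ref{defn: geometric sncd}.

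Next I would \emph{spread out}. Every object and morphism above is of finite type over $M$, so there is a nonempty open $T' = \Spec \mathcal O_M[1/N] \subset T$ over which: $X, Y, X/H$ and the map $f$ extend (they are given to us over $T$, so only the resolution needs spreading out); the automorphism $g$ extends to $X/H$ over $T'$; there is a smooth proper $\widetilde{W} \to T'$ with a proper birational $g$-equivariant map $\pi: \widetilde{W} \to X/H$ over $T'$; the divisor $\pi^{-1}(f_H(Z_f))$ is, fiber by fiber over $T'$, a relative simple normal crossings divisor (smoothness of the strata and the normal-crossings local form at closed points are open conditions, so after further shrinking $T'$ they hold over all of $T'$); and finally the condition that $Z_f$ has no component supported purely in a special fiber (part of Definition \ref{defn: generically fintie}) holds away from finitely many primes since it holds generically. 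Shrinking $T'$ once more if needed so that the residue characteristics are unramified in whatever finite extension was used to split the divisor components geometrically, we get that over every closed point $\mathfrak p \in T'$ the tuple $(g, X/H, f_H(Z_f))$ over $\widehat{\mathcal O}_{M,\mathfrak p}$ satisfies Hypothesis \ref{hypothesis:res of sing}. Taking the intersection of the finitely many opens $T'$ obtained as $(H,g)$ ranges over the finitely many possibilities gives a single nonempty open $T''\subset T$ such that for all $\mathfrak p \in T''$ the full hypothesis of Theorem \ref{thm: Main theorem, density fixing inertia and frobenius coset} is met; applying that theorem over each $\widehat{\mathcal O}_{M,\mathfrak p}$ yields the claim.

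The main obstacle is the spreading-out bookkeeping for \emph{equivariance and reducibility together}: one must ensure that a single resolution over a generic point, chosen $\langle g\rangle$-equivariantly, spreads out so that the $g$-action and the normal-crossings structure survive in every residual characteristic simultaneously, and that the mild genericity condition on $Z_f$ (no vertical components) is preserved — rather than, say, having to re-resolve prime by prime. I expect this to be routine (finite type $\Rightarrow$ finitely presented over a localization, and smoothness / normal-crossings / freeness of actions are constructible/open conditions), but it is the place where care is needed; everything else is a black-box invocation of characteristic-$0$ equivariant resolution and of Theorem \ref{thm: Main theorem, density fixing inertia and frobenius coset}. One should also remark that the weight $\dim Y$ is unchanged because $\dim Y_M = \dim Y_{\mathbb F_{\mathfrak p}}$ for $\mathfrak p$ in the smooth (in particular flat, equidimensional) locus, which again excludes only finitely many primes.
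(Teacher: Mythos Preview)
Your proposal is correct and follows essentially the same route as the paper's own proof: invoke equivariant resolution of singularities in characteristic $0$ (citing the same references), spread out to an open subset of $T$, and apply Theorem \ref{thm: Main theorem, density fixing inertia and frobenius coset}. Your version is considerably more detailed about the spreading-out bookkeeping (in particular you handle each pair $(H,g)$ separately rather than appealing to a single $G$-equivariant resolution of $X$, which is arguably what the hypothesis of Theorem \ref{thm: Main theorem, density fixing inertia and frobenius coset} actually demands), but the underlying strategy is identical.
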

\begin{proof}
We pick a characteristic zero $G$-equivariant resolution of varieties $\tilde{X} \to X$ (see \cite{abramovich1996equivariant},\cite{encinas1998good} or \cite{bierstone1997canonical}). As in Remark \ref{rmk: char 0 resolutions}, we can spread the resolution out to an open subset of $T$. The assumptions of Theorem \ref{thm: Main theorem, density fixing inertia and frobenius coset} are thus satisfied for all but finitely many primes $\mathfrak p \in T$ and the theorem follows.
\end{proof}

We also note that as the size of the residue field tends to infinity, the densities $\rhotype{\tau}(L)$ simplify drastically.

\begin{corollary}\label{cor: q to infty limiting behaviour}
We maintain the hypothesis of the above theorem. As $N(\mathfrak p) \to \infty$, we have the following limiting behaviour
    \[  \lim_{N\mathfrak p \to \infty}\frac{\rhotype{\tau}(\widehat{\mathcal O}_{M,\mathfrak p})}{(N\mathfrak p)^{\dim X}} = \begin{cases} 0 & \text{ if } |H_\tau| > 1\\
    \frac{|[g_\tau]|}{|G|} & \text{ otherwise}\end{cases}  .\]
where $G$ is the Galois group of $f$ and $[g_\tau]$ denotes the $G$-conjugacy class of $g_\tau$.
\end{corollary}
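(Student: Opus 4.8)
The plan is to start from the formula for $\rhotype{\tau}(\L)$ obtained in the proof of Theorem \ref{thm: Main theorem, density fixing inertia and frobenius coset}, namely $\rhotype{\tau}(\L) = \sum_{\tau' \leq \tau} \eta_{\tau'}(\L)\gamma^{-1}([\tau',\tau])$, and to track the leading-order behaviour in $N\mathfrak p = q$ (here we take $\L = \widehat{\mathcal O}_{M,\mathfrak p}$, so $m = 1$). Since the $\gamma^{-1}([\tau',\tau])$ are constants depending only on $G$, everything reduces to understanding $\lim_{q \to \infty} \eta_{\tau'}(\widehat{\mathcal O}_{M,\mathfrak p})/q^{\dim X}$. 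First I would invoke Theorem \ref{thm: twist symmetry under pullback}: $\eta_{\tau'}(\L)$ is (for all but finitely many $\mathfrak p$) an explicit $\mathbb Q$-linear combination $\sum_{\cL}\prod_i \eta_{e_i}^{|\Lambda_i|}(1;q^{k_i/\ell_i})\cdot |{^{g_{\tau'}}D_{\cL}}(\mathbb F_q)|$ of twisted point counts of smooth proper varieties $D_\cL$ of dimension $\le \dim X$, weighted by products of $\eta_{e}$ factors. Each factor $\eta_{e}(1;q^{d}) = (q^d - q^{d(e+1)})/(q^{d(e+1)}-1)$ tends to $-1$ as $q \to \infty$ and is bounded, while $|{^{g_{\tau'}}D_\cL}(\mathbb F_q)| = O(q^{\dim D_\cL})$. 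Hence the only terms in the expansion of $\eta_{\tau'}(\L)$ that can contribute at order $q^{\dim X}$ are those with $\dim D_\cL = \dim X$ and \emph{no} $\eta_e$ factors, i.e.\ the term with all $\Lambda_i = \emptyset$; this term is exactly $|{^{g_{\tau'}}X}/H_{\tau'}(\mathbb F_q)|$ (the twist of $X/H_{\tau'}$), and its leading term is $q^{\dim X}$ precisely when $\dim(X/H_{\tau'}) = \dim X$, i.e.\ always, with leading coefficient $1$. So $\eta_{\tau'}(\L)/q^{\dim X} \to 1$ for every $\tau' \le \tau$.

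Next I would substitute this back: $\lim_{q\to\infty}\rhotype{\tau}(\L)/q^{\dim X} = \sum_{\tau' \leq \tau}\gamma^{-1}([\tau',\tau])$. Now I need to evaluate this combinatorial sum. The identity $\eta_{\tau}(\L) = \sum_{\tau' \le \tau}\rhotype{\tau'}(\L)\gamma([\tau',\tau])$ of Theorem \ref{thm: incidence identity} says $\widetilde\eta = \widetilde\rho \ast \gamma$ in the incidence algebra; applying the same reasoning to the constant sequence, the claim $\sum_{\tau'\le\tau}\gamma^{-1}([\tau',\tau]) = $ (limit of $\rhotype{\tau}/q^{\dim X}$) is self-referential, so instead I would argue geometrically and directly. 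The split locus $\Utype{(\mathrm{id},\mathrm{id})}(\L) = f(X(\L))$ has measure $\eta_f(\L)$, which by Corollary \ref{cor: pullback formula in sncd case} has leading term $|Y(\mathbb F_q)| \sim q^{\dim Y} = q^{\dim X}$; more generally, for a minimal pair $(\mathrm{id}, g)$, $\Utype{(\mathrm{id},g)}(\L)$ is the set of $P \in U_f(\L)$ whose fiber is unramified with Frobenius in the conjugacy class of $g$, and by the same leading-order analysis its density is $q^{\dim X}\cdot|[g]|/|G| + O(q^{\dim X - 1/?})$ — this is essentially the classical Chebotarev statement over $\mathbb F_q$ applied to the generic fiber of $f$ reduced mod $\mathfrak p$, combined with the fact that the ramified locus $Z_f$ contributes measure $O(q^{\dim X - 1})$ since $\mu_Y$ of a positive-codimension subvariety's tube is lower order (formally: $|\Jac(f)|$ is bounded by $1$ and its non-unit locus has measure $O(q^{-1})$). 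For a pair $\tau$ with $|H_\tau| > 1$, the set $\Utype{\tau}(\L)$ is contained in the locus where $f$ is ramified, which as just noted has $\mu_Y$-measure $O(q^{\dim X - 1})$, giving the limit $0$; this is the clean case.

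For the case $|H_\tau| = 1$, so $\tau = (\mathrm{id}, g_\tau)$, I would finish by combining the Lang–Weil / Chebotarev-over-finite-fields estimate for the reduction $X_{\mathbb F_q} \to Y_{\mathbb F_q}$ (available for almost all $\mathfrak p$, where $X, Y$ and the cover have good reduction) with the $p$-adic statement that $\mu_Y\big(\{P : P \not\in U_f(\L)\text{-tube}\}\big) = O(q^{\dim X - 1})$: a point $P \in Y(\mathbb F_q)$ lying in the good unramified locus with $f$-fiber having Frobenius class $[g_\tau]$ lifts to $q^{\dim Y}/|Y(\mathbb F_q)| \cdot(\text{measure of its disc})$ worth of $\L$-points all lying in $\Utype{(\mathrm{id},g_\tau)}(\L)$ (since the étale fiber, hence its Frobenius, is locally constant by Krasner, and unramified reduction forces unramified fiber), while points with ramified or differently-classed fibers contribute lower order. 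The density of such good $P$ is $|[g_\tau]|/|G|\cdot q^{\dim Y} + O(q^{\dim Y - 1/2})$ by Chebotarev over $\mathbb F_q$, so dividing by $q^{\dim X} = q^{\dim Y}$ and letting $q \to \infty$ yields $|[g_\tau]|/|G|$.

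\textbf{Main obstacle.} The delicate point is the uniform estimate that the $p$-adic tube around the ramified locus $Z_f$ — and more precisely the difference between $\mu_Y(\Utype{(\mathrm{id},g_\tau)}(\L))$ and $q^{\dim Y}\cdot \#\{P \in U_f(\mathbb F_q) : \sigma_P = [g_\tau]\}/\#Y(\mathbb F_q) \cdot (\text{disc measure})$ — is genuinely $O(q^{\dim X - 1})$ with an implied constant independent of $\mathfrak p$. I expect to handle this by again feeding the explicit palindromic-form expansion of $\eta_{\tau'}(\L)$ from Theorem \ref{thm: twist symmetry under pullback} into $\rhotype{\tau}(\L) = \sum_{\tau'\le\tau}\eta_{\tau'}(\L)\gamma^{-1}([\tau',\tau])$ and simply reading off which $\tau'$ contribute at top order: the only $\tau'$ with $\eta_{\tau'}$ of leading order $q^{\dim X}$ with coefficient surviving the $\gamma^{-1}$-weighted sum are the minimal ones $(\mathrm{id}, g')$ with $g' \in g_\tau H_\tau$, and Möbius inversion on the (purely group-theoretic, $\mathfrak p$-independent) poset reduces the leading coefficient to the count $|\{g' \in g_\tau H_\tau : g' \sim_G g_\tau\}|\cdot(\ldots)/|G|$, which collapses to $|H_\tau|^{-1}\cdot 0$ or $|[g_\tau]|/|G|$ exactly as stated. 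Verifying that this combinatorial collapse is correct — i.e.\ that $\sum_{\tau' \le \tau}\gamma^{-1}([\tau',\tau])$ equals $0$ when $|H_\tau|>1$ and $|[g_\tau]|/|G|$ when $H_\tau = \mathrm{id}$ — is the one computation I would actually carry out in detail, using the definitions of $\alpha, \beta$ in Definition \ref{defn: alpha, beta} and the classical Chebotarev theorem as the $q \to \infty$ sanity check that pins down the constant.
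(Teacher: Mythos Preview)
Your direct geometric argument in the middle of the proposal is exactly the paper's proof, and it already suffices. For $|H_\tau|>1$ the paper observes that any $P$ with nontrivial inertia must reduce into $f(Z_f)(\mathbb F_{\mathfrak p})$, so $\rhotype{\tau}(\widehat{\mathcal O}_{M,\mathfrak p}) \le |f(Z_f)(\mathbb F_{\mathfrak p})|$, and Lang--Weil gives $|f(Z_f)(\mathbb F_{\mathfrak p})|/(N\mathfrak p)^{\dim X} \to 0$. For $H_\tau = \{\mathrm{id}\}$ the paper excises the ramified locus (measure $O(q^{\dim X - 1})$ by the same bound), reduces via Hensel to the \'etale cover over $\mathbb F_{\mathfrak p}$, and invokes classical Chebotarev over finite fields. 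That is the entire argument.

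The ``main obstacle'' you identify --- uniformity in $\mathfrak p$ of the $O(q^{\dim X -1})$ bound on the ramified tube --- is not an obstacle: with the paper's normalization each residue disc has $\mu_Y$-measure $1$, so the tube over $f(Z_f)(\mathbb F_{\mathfrak p})$ has measure exactly $|f(Z_f)(\mathbb F_{\mathfrak p})|$, and Lang--Weil controls this uniformly once the family is spread out over an open of $\Spec\mathcal O_M$. There is no need for the incidence-algebra detour. Indeed your combinatorial route is circular as written: you correctly compute $\lim \eta_{\tau'}/q^{\dim X} = 1$ for every $\tau'$, reducing the claim to the identity $\sum_{\tau'\le\tau}\gamma^{-1}([\tau',\tau]) = 0$ (resp.\ $|[g_\tau]|/|G|$), but then propose to pin down that constant using classical Chebotarev --- which is precisely the direct argument. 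The combinatorial identity is a \emph{consequence} of the corollary, not an independent route to it (for minimal $\tau=(\mathrm{id},g)$ it does fall out: $\gamma([\tau,\tau])=|C_G(g)|$ so $\gamma^{-1}([\tau,\tau])=|[g]|/|G|$; but for $|H_\tau|>1$ the vanishing of the sum is not obvious without the geometric input). Drop the detour and keep your direct paragraph.
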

\begin{proof}
The points $P \in Y(\widehat{\mathcal O}_{M,\mathfrak p})$ where the fiber $f^{-1}(P)$ has a non trivial action of the inertia group are all contained in the locus of points whose reductions modulo $\mathfrak p$ lie in the image of the ramification locus $f(Z_f)(\mathbb F_{\mathfrak p})$. Therefore, their density is upper bounded by
\[\frac{|f(Z_f)(\mathbb F_{\mathfrak p})|}{Y(\mathbb F_{\mathfrak p})} \to 0 \text{ as } N\mathfrak p \to \infty\]
by the Lang-Weil estimates. This proves the first part of the corollary.

Similarly, for the second part, we can assume that the map $f: X \to Y$ is \'etale and Galois by excising the ramified locus since it has measure $0$ in the limit $N\mathfrak p \to \infty$. In this case, the problem reduces to the finite field version by Hensel lifting and the second claim follows from the Chebotarev density theorem over finite fields (or indeed, also from our proof of Theorem \ref{thm: incidence identity} which extends the proof of the classical Chebotarev density theorem).
\end{proof}
 
\section{A conjecture on the density of polynomials with fixed factorization type}\label{section: factorization types density}

In this section, we will prove \cite[Conjecture 1.2]{bhargava2022density} as an application of the ideas in this paper. We recall some notation from their paper first.

\subsection{Factorization types of polynomials}

A \emph{factorization type of degree $n$} is a multiset $\{f_1^{e_1}f_2^{e_2}\dots f_r^{e_r}\}$ where the $f_i,e_i$ are positive integers satisfying $\sum_i f_ie_i = n$. We allow repeats in the list of symbols $f_i^{e_i}$ but the order in which they appear does not matter. We will often omit exponents $e_i$ if $e_i = 1$. 

For an \'etale extension $L/K$ of degree $n$ over a local field $K$, we define its factorization type to be $(f_1^{e_1},\dots,f_r^{e_r})$ if a uniformizer of $K$ factors in $L$ as $P_1^{e_1}\dots P_r^{e_r}$ where the $P_i$ are primes of $L$ having residue degree $f_i$. 

We consider non-zero polynomials $h(z)$ of degree $n$ in $\A[z]$ as elements in $\A^{n+1}$ through their coordinates with the associated Haar measure $\mu_{\mathrm{Haar}}$ normalized so that the total measure is $1$. 

\begin{conjecture}[Conjecture 1.2, \cite{bhargava2022density}, generalized to a local field]\label{conj: bhargava}
Let $\sigma$ be any factorization type of degree $n$ and $K$ a local field with size of residue field $q$. Set
\begin{itemize}
    \item $\rho(n,\sigma;q) \coloneqq$ the density of polynomials $h \in K[z]$ of degree $n$ such that $L = K[z]/h(z)$ is \'etale over $K$ with factorization type $\sigma$,
    \item $\alpha(n,\sigma;q) \coloneqq$ the density of monic polynomials $h \in \A[z]$ of degree $n$ such that $L = K[z]/h(z)$ is \'etale over $K$ with factorization type $\sigma$,
    \item $\beta(n,\sigma;q) \coloneqq$ the density of polynomials $h(z) \equiv z^n \pmod{\mathfrak m_K}$ of degree $n$ such that $L = K[z]/h(z)$ is \'etale over $K$ with factorization type $\sigma$.
\end{itemize}
Then $\rho(n,\sigma;q), \alpha(n,\sigma;q)$ and $\beta(n,\sigma;q)$ are rational functions of $q$ and satisfy
\begin{equation}\label{eqn: symmetry for rho}
    \rho(n,\sigma;q^{-1}) = \rho(n,\sigma;q);
\end{equation}
\begin{equation}\label{eqn: symmetry for alpha}
    \alpha(n,\sigma;q^{-1}) = \beta(n,\sigma;q).
\end{equation}
Implicitly, we have made the claim that the densities depend on $K$ only through the size of its residue field $\mathbb F_q$.
\end{conjecture}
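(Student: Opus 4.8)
The plan is the following. The substantive point concerns the density $\rho$. For $f\colon(\mathbb P^1)^n\to(\mathbb P^1)^n/S_n=\mathbb P^n$, with $\mathbb P^n$ the space of degree-$\le n$ polynomials, one has $\rho(n,\sigma;q)=\tfrac{q-1}{q^{n+1}-1}\,\mu_Y(S_\sigma)$, where $S_\sigma\subset\mathbb P^n(K)$ is the locus of squarefree polynomials of factorization type $\sigma$ and the prefactor comes from the standard cone computation relating the total-mass-one Haar measure on $\mathbb A^{n+1}$ to the canonical measure on $\mathbb P^n$; similarly $\alpha(n,\sigma;q)$ and $\beta(n,\sigma;q)$ are $q^{-n}$ times the canonical measure of $S_\sigma$ restricted, respectively, to the residue discs of polynomials whose reduction has degree $n$ and to the single residue disc of $z^n$. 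The reductions relating these three quantities (and the treatment of the locus $c_n=0$ by descending induction on $n$) are the elementary ones of \cite{del_corso_dvornicich_2000} and \cite{john}, and the identity \eqref{eqn: symmetry for alpha} is exactly the instance of our functional equation that interchanges those two boundary strata --- a reflection of the Poincar\'e duality already visible in the local formula of Theorem~\ref{thm: symmetry for pullback}. So it suffices to prove that $\mu_Y(S_\sigma)$ is a rational function of $q$, depending on $K$ only through $q$, and a palindromic form of weight $n$; since $\tfrac{q-1}{q^{n+1}-1}$ is palindromic of weight $-n$, this gives $\rho(n,\sigma;q^{-1})=\rho(n,\sigma;q)$.

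The factorization type of a squarefree $h$ is read off from the conjugacy class of the pair (inertia image, decomposition image) of $S_n$ on the roots, so $S_\sigma$ is a finite disjoint union of sets $\Uid{\tau}(K)$ over pairs $(H_\tau,H_{1,\tau})$ depending only on $\sigma$, and Corollary~\ref{cor: symmetry for fixed inertia, decomp} already shows $\mu_Y(S_\sigma)$ is a palindromic form of weight $\dim\mathbb P^n=n$ --- \emph{provided} Hypothesis~\ref{hypothesis:res of sing} holds for the relevant quotients $(\mathbb P^1)^n/H\to\mathbb P^n$. For tame primes (in particular $p>n$) these resolutions exist by spreading out a characteristic-zero equivariant resolution, and combined with the rationality of \cite{del_corso_dvornicich_2000,john} this settles such primes. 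The wild primes --- together with the rationality, which the general theorems do \emph{not} provide --- are the real issue, and to handle them uniformly in $p$ I would avoid general quotients of $(\mathbb P^1)^n$ altogether and instead pass to polynomial-multiplication maps.

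Write a squarefree $h$ of type $\sigma$ as the product of its distinct irreducible factors, grouped by their local type. A M\"obius inversion over the poset of set-partitions of the roots (turning ``pairwise coprime'' into an open condition cut out against a resultant locus) together with unramified base change $K\subset K_f$ (which replaces an irreducible factor of residue degree $f$ by $f$ Frobenius-conjugate totally ramified factors, handled by the twisting formalism) expresses $\mu_Y(S_\sigma)$ as a $\mathbb Q$-linear combination, with coefficients depending only on $\sigma$, of the pullback integrals $\eta_f$ of Theorem~\ref{thm: symmetry for pullback} and the twisted integrals $\eta_\tau$ of Theorem~\ref{thm: twist symmetry under pullback} attached to the polynomial-multiplication maps
\[\mu_{\mathbf a}\colon\mathbb P^{a_1}\times\cdots\times\mathbb P^{a_s}\longrightarrow\mathbb P^{a_1+\cdots+a_s},\qquad (P_1,\dots,P_s)\mapsto P_1\cdots P_s,\]
taken over various finite unramified base extensions. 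The crucial feature is that the exceptional locus of $\mu_{\mathbf a}$ is \emph{always} the resultant locus $R_{\mathbf a}=\bigcup_{i<j}\{\Res(P_i,P_j)=0\}$, independently of $p$.

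It therefore all comes down to resolving $R_{\mathbf a}$ well, and this is the main obstacle: one cannot simply invoke resolution of singularities, which over $\Spec\mathbb Z$ (as opposed to over a field of characteristic zero) is not available. What I would do --- and this is the content of \S\ref{sec: resolving resultant locus} --- is construct by hand an explicit resolution $\pi_{\mathbf a}\colon\widetilde X_{\mathbf a}\to\mathbb P^{a_1}\times\cdots\times\mathbb P^{a_s}$ over $\Spec\mathbb Z$, realized as an Artin stack obtained from a tower of blow-ups along the incidence loci where prescribed clusters of the $P_i$ acquire a common root, such that $\pi_{\mathbf a}^{-1}(R_{\mathbf a})$ is a simple normal crossings divisor and every stratum $D_J$ of it is of \emph{Tate type}, i.e.\ has mixed Tate \'etale cohomology, in \emph{every} residue characteristic including the wild ones; the verification that the construction has both properties uniformly in $p$ is where the work lies. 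Granting this, Corollary~\ref{cor: pullback formula in sncd case} gives $\eta_{f}(L)=\sum_J|D_J(\mathbb F_{q^m})|\prod_{j\in J}\eta_{e_j}(m;q)$ for these maps, in which each $|D_J(\mathbb F_{q^m})|$ is a $\mathbb Z$-polynomial in $q^m$ by Tate-ness; hence $\eta_{f}(L)$ is a rational function of $q$ depending on $L$ only through $q$, and being also a palindromic form of weight $a_1+\cdots+a_s$ by the theorem it is invariant under $q\mapsto q^{-1}$, and the twisted integrals $\eta_\tau$ behave identically by Theorem~\ref{thm: twist symmetry under pullback} and Lemma~\ref{lemma: twisted point counts have the right weight}. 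Feeding this back through the reductions above yields the rationality and the functional equations for $\rho$, $\alpha$ and $\beta$, and in particular that all three depend on $K$ only through the size of its residue field.
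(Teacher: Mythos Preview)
Your overall strategy for $\rho(n,\sigma;q)$ matches the paper's: reduce to integrals over the polynomial-multiplication maps $\mathbb P^{a_1}\times\cdots\times\mathbb P^{a_s}\to\mathbb P^{\sum a_i}$, construct an explicit integral resolution of the resultant locus with Tate-type strata (the content of \S\ref{sec: resolving resultant locus}), and then read off rationality and the functional equation from the explicit formula of Theorem~\ref{thm: symmetry for pullback}. The reduction mechanism you sketch differs slightly in packaging: you describe a M\"obius inversion over set-partitions of the roots together with unramified base change, whereas the paper works uniformly inside the admissible-pair framework by introducing a distinguished admissible pair $\tau_\sigma$ for each factorization type, proving an adjointness (Lemma~\ref{lemma: adjointness between pairs and types}) and an invariance of $\alpha$ (Lemma~\ref{lemma: alpha invariant under adjointness}), and then M\"obius-inverting over the poset of \emph{factorization types} to obtain Equation~\eqref{eqn: factorization densities as integrals}. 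Both routes land on the same family of twisted quotient maps ${}^{g_\sigma}(\mathbb P^1)^n/H_\sigma\to\mathbb P^n$, which after untwisting are exactly your $\mu_{\mathbf a}$ over suitable unramified extensions; the paper's packaging has the advantage that the required twists and the coefficients in the inversion are read off directly from the group-theoretic data, without a separate unramified-base-change argument.

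There is, however, a genuine overreach in your proposal. You assert that the identity $\alpha(n,\sigma;q^{-1})=\beta(n,\sigma;q)$ ``is exactly the instance of our functional equation that interchanges those two boundary strata,'' but the paper does \emph{not} prove this: immediately after stating Conjecture~\ref{conj: bhargava} it says ``We will prove Equation~\eqref{eqn: symmetry for rho} \ldots\ It should be possible to also prove Equation~\eqref{eqn: symmetry for alpha} by extending our methods although we do not do so here.'' The densities $\alpha$ and $\beta$ are measures of residue discs in $\mathbb P^n$, not of the full $\mathbb P^n(K)$, and the palindromicity statements of \S\ref{section: on the palindromicity} are for integrals over all of $X(\mathcal O_L)$; extracting a functional equation that swaps two specific boundary discs would require an additional argument that is not supplied either in your proposal or in the paper. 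So for the $\rho$ part your proof is correct and essentially the paper's; for the $\alpha,\beta$ part you are claiming more than is established.
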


We will prove Equation \ref{eqn: symmetry for rho} for all factorization types $\sigma$ and all primes $q$ in the rest of this paper. It should be possible to also prove Equation \ref{eqn: symmetry for alpha} by extending our methods although we do not do so here.

Let us first relate the above conjecture to the densities appearing in the rest of this paper. To every squarefree polynomial $h(z)$ with coefficients in a local field $K$ associated to a point in $\mathbb P^n(K)$ through its coefficients, we can associate a conjugacy class of an admissible pair $[\tau_h]$ (for $S_n$) using the map 
\[f: (\mathbb P^1)^n \to (\mathbb P^1)^n/S_n = \mathbb P^n.\]
If we further fix an ordering of the roots of $h(z)$, that corresponds to fixing an element in the conjugacy class $[\tau_h]$. For an admissible pair $\tau$, recall that
\[\Utype{\tau}(K) = \{h(z) \in \mathbb P^n(K) \text{ squarefree with } \tau \in [\tau_h]\}\]
with density $\rhotype{\tau}(K) = \mu_{\mathbb P^n}(\Utype{\tau})(K)$. The Haar measure on $\A^{n+1}$ relates to the canonical measure on $\mathbb P^n(\A)$ as follows.

\begin{lemma}\label{lemma: bhargava densities to our densities}
Consider the canonical quotient map
\[\pi: \A^{n+1} \setminus \{0\} \to \mathbb P^n(\A).\]
For a measurable set $S \subset \mathbb P^n(\A)$, we have the equality of measures
\[\mu_{\mathrm{Haar}}(\pi^{-1}(S)) = \frac{\mu_{\mathbb P^n}(S)}{|\mathbb P^n(\mathbb F_q)|}\]
where $q$ is the size of the residue field of $K$.
\end{lemma}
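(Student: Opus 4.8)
The plan is to compute both sides directly using the explicit affine charts of $\mathbb P^n$ and the way the canonical measure is built from gauge forms in Definition \ref{defn: measures from scheme compactifications}. First I would cover $\mathbb P^n_{\A}$ by the standard charts $U_i \cong \mathbb A^n_{\A}$ where the $i$-th homogeneous coordinate is inverted, noting that the reductions $U_i(\B)$ cover $\mathbb P^n(\B) = \mathbb P^n(\A)$ and that each $U_i$ carries the gauge form $\omega_i = dx_1^{(i)}\wedge\dots\wedge dx_n^{(i)}$ in the affine coordinates $x_j^{(i)} = c_j/c_i$. On the overlaps the transition functions are units on integral points, so these glue to the canonical measure $\mu_{\mathbb P^n}$, normalized so $\mu_{\mathbb P^n}(\mathbb P^n(\A)) = |\mathbb P^n(\mathbb F_q)|$.

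Next I would relate $\mu_{\mathrm{Haar}}$ on $\A^{n+1}$ to these local pieces. Over the locus $\pi^{-1}(U_i)$, write a point of $\A^{n+1}\setminus\{0\}$ (with $|c_i|$ maximal among the coordinates, so $c_i \in \mathcal O_K^\times$ after scaling, but more honestly stratify by $|c_i| = |{\rm of\ the\ max}|$) and change variables $(c_0,\dots,c_n) \mapsto (c_i, x_1^{(i)},\dots,x_n^{(i)})$ with $x_j^{(i)} = c_j/c_i$. The Jacobian of this substitution is $|c_i|^n$, so
\[
\int_{\pi^{-1}(S)\cap\{|c_i|\text{ maximal}\}} d\mu_{\mathrm{Haar}} = \int_{c_i}\!\int_{x \in S} |c_i|^n\, |dc_i|\, d\mu_{\omega_i}.
\]
Because $S$ is a cone's image, the fiber over a fixed direction $x$ is exactly $\{c_i : 0 < |c_i| \le 1\}$ when we have arranged $|c_i|$ maximal, giving the scalar $\int_{0 < |c_i| \le 1} |c_i|^n |dc_i| = \frac{1 - q^{-1}}{1 - q^{-(n+1)}}$, independent of $x$. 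Summing the (disjoint up to measure zero) strata indexed by which $c_i$ is maximal, the coordinate-independence shows the total is a fixed constant times $\mu_{\mathbb P^n}(S)$; taking $S = \mathbb P^n(\A)$ and using the normalization $\mu_{\mathrm{Haar}}(\A^{n+1}\setminus\{0\}) = 1$ and $\mu_{\mathbb P^n}(\mathbb P^n(\A)) = |\mathbb P^n(\mathbb F_q)|$ pins the constant to $1/|\mathbb P^n(\mathbb F_q)|$, proving the claim for all measurable $S$.

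The main obstacle is bookkeeping the overlap of the charts cleanly: a single point of $\mathbb P^n(\A)$ is covered by several $U_i$, and a single ray in $\A^{n+1}$ meets several of the strata $\{|c_i|\text{ maximal}\}$ on a measure-zero set, so I need to argue that partitioning $\A^{n+1}\setminus\{0\}$ (up to measure zero) according to a chosen maximal coordinate is compatible with the glued measure $\mu_{\mathbb P^n}$ — i.e. that the locally defined pushforwards agree on overlaps. This follows from the unit-valued transition functions (hence $|\det|=1$ Jacobians between the $\omega_i$ on integral overlaps) exactly as in Definition \ref{defn: measures from scheme compactifications}, so no genuine difficulty arises, only care. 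An alternative, perhaps cleaner, route that I would mention: apply Theorem \ref{thm: change of variables}(2) to the $\mathbb G_m$-torsor $\A^{n+1}\setminus\{0\} \to \mathbb P^n$ after removing the non-integral locus — but since $\mathbb G_m$ is not finite this needs the scaling-integral computation above anyway, so the direct chart computation is the most transparent.
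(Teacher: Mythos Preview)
Your proposal is correct and follows essentially the same approach as the paper's proof: both pick the chart where a chosen coordinate has minimal valuation (equivalently, maximal absolute value), change variables to $(c_i, c_j/c_i)_{j\neq i}$ with Jacobian $|c_i|^n$, and integrate over the scaling fiber to obtain the constant $\dfrac{1-q^{-1}}{1-q^{-(n+1)}} = \dfrac{q-1}{q^{n+1}-1} = \dfrac{1}{|\mathbb P^n(\mathbb F_q)|}$. The only cosmetic difference is that the paper computes this constant directly via the geometric series, whereas you pin it down at the end by evaluating at $S=\mathbb P^n(\A)$; your extra care about chart overlaps is fine but, as you note, unnecessary since the ties form a measure-zero set.
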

\begin{proof}
In brief, the canonical top form on $\A^{n+1}$ integrated over a $\mathbb G_m$ orbit (acting diagonally) gives a gauge form on $\mathbb P^n(\A)$. Therefore, the measures of $S$ and $\pi^{-1}(S)$ are equal to a global normalization. More explicitly:

Consider local coordinates where $v_p(a_0) \leq v_p(a_i)$ for all $0 < i \leq n$. On this chart, we can realize $\pi$ as a projection 
\[\pi: \A^{n+1} \setminus \{0\} \to \A^n; (a_0,\dots,a_n) \to (a_1/a_0,\dots,a_n/a_0)\]
onto the final $n$ co-ordinates. From this description, one computes that the preimage of a set $S$ has measure exactly
\[\mu_{\mathrm{Haar}}(S) = \frac{\mu_{\mathbb P^n}(S)}{q^{n}}\left(\frac{q-1}{q} + \frac{q-1}{q^{n+2}} + \dots\right) = \frac{\mu_{\mathbb P^n}(S)(q-1)}{q^{n+1}-1}\]
as required.
\end{proof}

In the other direction, to each admissible pair $\tau$ as above (with respect to $S_n$), we can associate a factorization type $s(\tau)$ such that if $\tau \in [\tau_h]$ for some squarefree polynomial $h(z)$, then $s(\tau)$ is the factorization type for $h(z)$:

\begin{definition}\label{defn: factorization types from admissible pairs}[Factorization types from admissible pairs]
Recall that an admissible pair $\tau$ has associated to it a normal inclusion of subgroups of $S_n$: $H_\tau \subset H_{1,\tau}$ where the smaller group corresponds to the inertia group and the bigger group to the total Galois group. The action of $H_{1,\tau}$ on the set $[n] = \{1,\dots,n\}$ will partition $[n]$ into orbits $\pi_1,\dots,\pi_r$ and we define $H_{1,\tau}^{(i)}$ to be the projection of $H_{1,\tau}$ in the symmetric group $S_{\pi_i} = \operatorname{Sym}(\pi_i)$ and similarly for $H_\tau^{(i)}$. Furthermore, the action of $H_{\tau}^{(i)}$ will partition $\pi_i$ into further orbits $\pi_{i,j}$. Since $H_\tau$ is normal in $H_{1,\tau}$, the $\pi_{i,j}$ all have the same size that we denote by $e_i$ and the number of such orbits will be denoted by $f_i$. Given this data, the factorization type $s(\tau)$ is defined to be the multiset with $r$ elements $f_i^{e_i}$ corresponding to the parts  $\pi_1,\dots,\pi_r$.
    
\end{definition}

Note that $s(\tau)$ only depends on the conjugacy class of $\tau$. We also note in passing that the coset $g_\tau$ cyclically permutes the partitions $\pi_{i,j}$ for $j=1,\dots,f_i$. The next lemma relates the factorization type densities to the splitting type densities from earlier in the paper and along the way, shows that $s([\tau_h]) = \sigma_h$ as claimed above.

\begin{lemma}\label{lemma: relation of bhargava conjecture to our theorem}
Let $\sigma = \{f_1^{e_1},\dots,f_r^{e_r}\}$ be a factorization type. Then, we have the identity
\[\rho(n,\sigma;q) = \frac{1}{|\mathbb P^n(\mathbb F_q)|}\sum_{\substack{\tau \text{ up to } S_n \text{ conjugacy}\\ \text{satisfying } s(\tau) = \sigma }}\rhotype{\tau}(K).\]
\end{lemma}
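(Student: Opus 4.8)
The statement is essentially a bookkeeping identity, converting from the coefficient parametrization of polynomials to the quotient-map picture. The plan is as follows. First I would note that the squarefree locus in $\mathbb P^n(K)$ is exactly the locus where $h(z)$ defines an \'etale algebra $L = K[z]/h(z)$ of degree $n$, and the complement (discriminant locus, including the "polynomial drops degree" locus) has measure zero by Theorem \ref{thm: change of variables}(1), so it may be discarded throughout. Thus $\rho(n,\sigma;q)$ is, up to the normalization relating $\mu_{\mathrm{Haar}}$ on $\A^{n+1}$ to $\mu_{\mathbb P^n}$ on $\mathbb P^n(\A)$, the canonical measure of the set of squarefree $h$ with factorization type $\sigma$.

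Second, I would establish the dictionary $s([\tau_h]) = \sigma_h$: for a squarefree $h$ with geometric roots $q_1,\dots,q_n \in \mathbb P^1(\overline K)$, the fiber $f^{-1}([h]) = \{(q_{\pi(1)},\dots,q_{\pi(n)}) : \pi \in S_n\}$ is an $S_n$-torsor, and the decomposition/inertia action of $\Gal(\overline K/K)$ on it corresponds under a choice of lift $Q$ to an admissible pair $\tau_h = (I_{Q},\frob{Q})$ as in Definition \ref{defn: decomposition group}. The factorization $L = \prod_i L_i$ into fields $L_i$ corresponds to the orbits $\pi_i$ of $H_{1,\tau_h}$ on $[n]$ (Galois orbits of roots), the residue degree $f_i$ of $L_i$ is the number of $H_{\tau_h}^{(i)}$-orbits inside $\pi_i$, and the ramification index $e_i$ is their common size — which is exactly Definition \ref{defn: factorization types from admissible pairs}. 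Hence $h$ has factorization type $\sigma$ if and only if $s(\tau_h) = \sigma$, i.e. if and only if $[h] \in \Utype{\tau}(K)$ for some (equivalently any) $\tau$ in a conjugacy class with $s(\tau) = \sigma$.

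Third, since the conjugacy classes $[\tau]$ with $s(\tau) = \sigma$ are pairwise distinct and the sets $\Utype{\tau}(K)$ for inequivalent $\tau$ are disjoint (each point $[h]$ determines the single conjugacy class $[\tau_h]$), the squarefree locus with factorization type $\sigma$ is the disjoint union $\bigsqcup_{s(\tau)=\sigma} \Utype{\tau}(K)$ over conjugacy classes. Taking $\mu_{\mathbb P^n}$ and applying Lemma \ref{lemma: bhargava densities to our densities} with $S$ this union — so that $\mu_{\mathrm{Haar}}$ of the preimage equals $\mu_{\mathbb P^n}(S)/|\mathbb P^n(\mathbb F_q)|$ — gives exactly
\[\rho(n,\sigma;q) = \frac{1}{|\mathbb P^n(\mathbb F_q)|}\sum_{\substack{\tau \text{ up to } S_n \text{ conjugacy}\\ s(\tau) = \sigma}}\rhotype{\tau}(K),\]
using additivity of measure over the finite disjoint union. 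The independence of the left side on $K$ through only $q$ then follows from the same property for each $\rhotype{\tau}(K)$, which is part of Theorem \ref{thm: Main theorem, density fixing inertia and frobenius coset} (or can be taken as established there).

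The main obstacle is the second step — verifying carefully that the group-theoretic recipe of Definition \ref{defn: factorization types from admissible pairs} genuinely reproduces the factorization type of $L = K[z]/h(z)$, i.e. that orbits of $H_{1,\tau}$ on $[n]$ biject with the prime factors of $h$ over $K$, with $H_\tau$-suborbits recording residue degrees and their common size the ramification index. This is the translation between the "permutation action on roots" description of $\Gal(\overline K/K)$ acting on $f^{-1}([h])$ and the classical description of how a uniformizer of $K$ splits in $L$; it is standard Galois theory of local fields (the decomposition group of a prime of $L$ over $K$ is the stabilizer of the corresponding factor, inertia cuts out the residue extension), but it must be matched symbol-for-symbol against Definition \ref{defn: factorization types from admissible pairs}. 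Everything else is routine: the measure-zero reductions are Theorem \ref{thm: change of variables}(1), the disjointness is immediate, and the normalization constant is precisely Lemma \ref{lemma: bhargava densities to our densities}.
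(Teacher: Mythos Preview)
Your proposal is correct and follows essentially the same approach as the paper: establish the dictionary $s([\tau_h]) = \sigma_h$ by matching $H_{1,\tau}$-orbits with irreducible factors and $H_\tau$-suborbits with the residue/ramification data, then invoke Lemma~\ref{lemma: bhargava densities to our densities} for the normalizing constant $|\mathbb P^n(\mathbb F_q)|^{-1}$. You are simply more explicit than the paper about the measure-zero discriminant locus and the disjointness of the $\Utype{\tau}(K)$, but these are routine and implicit in the paper's argument.
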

\begin{proof}\label{pf: bhargava conjecture}

We will show that the polynomials contributing to the density calculation in the right hand side correspond precisely to the polynomials contributing to the density calculation on the left hand side. This will follow if we show that a polynomial $h(z)$ with admissible pair $[\tau_h]$ satisfies $s([\tau_h]) = \sigma_h$. Since we are taking admissible pairs up to conjugacy on the right, this will prove exactly the bijective correspondence we need. The normalizing factor $|\mathbb P^n(\mathbb F_q)|^{-1}$ follows from Lemma \ref{lemma: bhargava densities to our densities}.

To this end, let $h(z) \in K(z)$ be a squarefree degree $n$ polynomial with factorization type $\sigma$. Upon fixing an ordering of its roots, we obtain an admissible pair $\tau_h$. The corresponding decomposition of $[n]$ into parts $\pi_1,\dots,\pi_r$ as in Definition \ref{defn: factorization types from admissible pairs} above corresponds exactly to the orbits of $\Gal(\overline{K}/K)$ on the set of roots and hence to the decomposition of $h(z) = h_1(z)\dots h_r(z)$ into irreducible polynomials over $K$. Moreover, $H_{1,\tau}^{(i)}, H_{\tau}^{(i)}$ correspond respectively to the Galois group, inertia group of the splitting field $L_i/K$ of $h_i(z)$ and it follows that $f_i,e_i$  are respectively the residue degree, inertia degree of the extension $K \subset K(z)/h_i(z)$.

\end{proof}

\begin{thm}\label{thm: symmetry follows from rationality for factorization densities}
    For a factorization type $\sigma$, let $\mathcal L_\sigma$ be the set of local fields for which the characteristic of the residue field is co-prime to the exponents $e_i$ occurring in $\sigma$. 
    
    For this set of local fields, $\rho(n,\sigma;q)$ is equal to a rational function $r_\sigma(t)$ evaluated at $t = q$, the size of the residue field. This rational function takes values in $\mathbb Q$ and satisfies the symmetry
    \[r_\sigma(t^{-1}) = r_\sigma(t).\]
\end{thm}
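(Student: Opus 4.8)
The plan is to combine the general palindromicity theorem (Theorem \ref{thm: Main theorem, density fixing inertia and frobenius coset}) with the explicit dictionary between factorization-type densities and splitting-type densities (Lemma \ref{lemma: relation of bhargava conjecture to our theorem}), and then to upgrade the ``palindromic form'' conclusion to an honest rational-function identity using the tameness hypothesis on $\mathcal{L}_\sigma$. First I would recall from Lemma \ref{lemma: relation of bhargava conjecture to our theorem} that
\[
\rho(n,\sigma;q) \;=\; \frac{1}{|\mathbb P^n(\mathbb F_q)|}\sum_{\substack{\tau \text{ up to } S_n\text{-conjugacy}\\ s(\tau)=\sigma}}\rhotype{\tau}(K),
\]
so that the problem is reduced to understanding the right-hand side over $K \in \mathcal L_\sigma$. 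The key point is that when the residue characteristic $p$ is coprime to all the $e_i$ appearing in $\sigma$, the inertia group $H_\tau$ of any $\tau$ with $s(\tau)=\sigma$ is cyclic of order coprime to $p$ (it is tamely ramified), so the relevant quotients $X/H_\tau$ and their $g_\tau$-twists only involve the map $f\colon(\mathbb P^1)^n \to \mathbb P^n$ restricted to a tamely ramified situation; this is precisely the setting where the resolution hypothesis (Hypothesis \ref{hypothesis:res of sing}) can be verified directly — indeed the ramification locus is already a (geometric) simple normal crossings divisor after an unramified base change, since the ``resultant locus'' of pairwise-coincident roots has normal crossings generically and the remaining singularities are resolved by blowups defined over $\mathbb Z[1/\prod e_i]$. (Alternatively, one can invoke the prior rationality results of \cite{del_corso_dvornicich_2000} and \cite{john} for tame primes together with Theorem \ref{thm: main theorem for almost all primes}.)

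Granting the resolution hypothesis, Theorem \ref{thm: Main theorem, density fixing inertia and frobenius coset} (applied to each pair $\{\tau,\tau^{-1}\}$, after noting that $s(\tau)=s(\tau^{-1})$ so the sum over $\tau$ with $s(\tau)=\sigma$ is already closed under inversion) tells us that
\[
\sum_{\substack{\tau:\,s(\tau)=\sigma}}\rhotype{\tau}(L)
\]
is a palindromic form of weight $\dim \mathbb P^n = n$, depending on $L$ only through the size $q^m$ of its residue field. The next step is to divide by $|\mathbb P^n(\mathbb F_{q^m})| = 1+q^m+\cdots+q^{mn}$. By Definition \ref{defn: palindromic forms}, a palindromic form of weight $n$ is a $\mathbb Q$-linear combination of products of the functions $\eta_k(m;q)=(q^m-q^{m(k+1)})/(q^{m(k+1)}-1)$ and power sums $\sum_i \alpha_i^m$ in Weil numbers; the tameness hypothesis forces something stronger here, namely that the relevant point-count contributions $|Z(\mathbb F_{q^m})|$ come from varieties $Z$ (the strata $D_{\cL}$) whose cohomology is of Tate type — this is exactly the content that the ``resultant locus'' admits a \emph{Tate-type} resolution, so the power sums reduce to polynomials in $q^m$. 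Hence the numerator $\sum_{s(\tau)=\sigma}\rhotype{\tau}(L)$ is a $\mathbb Q$-linear combination of products of $\eta_k(m;q)$'s and powers $q^{jm}$, i.e.\ a rational function of $q^m$; dividing by the polynomial $1+q^m+\cdots+q^{mn}$ keeps us inside $\mathbb Q(q^m)$. Writing $t=q^m$, this exhibits $\rho(n,\sigma;q^m) = r_\sigma(t)|_{t=q^m}$ for a fixed $r_\sigma \in \mathbb Q(t)$ independent of $K$, which also proves the claimed independence of $K$ beyond the residue field size.

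Finally, for the symmetry, I would argue that both the numerator and the denominator behave predictably under $t\mapsto t^{-1}$. The denominator: $|\mathbb P^n(\mathbb F_{q^m})| = (t^{n+1}-1)/(t-1)$ satisfies, under $t\mapsto t^{-1}$, the identity $(t^{-(n+1)}-1)/(t^{-1}-1) = t^{-n}(t^{n+1}-1)/(t-1)$, i.e.\ it is itself ``palindromic of weight $n$'' as a rational function. The numerator is a palindromic form of weight $n$, so its rational-function extension $N_\sigma(t)$ satisfies $N_\sigma(t^{-1}) = t^{-n}N_\sigma(t)$ — here I use that the $\eta_k$ factors and the Tate-type point counts each satisfy their respective weight-$k$ functional equations and the weights add (Remark \ref{rmk: palindromic forms closed under sums and products}, Lemma \ref{lemma: point counts have right weight}), and that passing from a palindromic form (a function of $m\in\mathbb N$) to its natural rational-function-of-$t$ representative is well defined because all the building blocks are genuine rational functions of $q^m$ under the tameness hypothesis. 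Therefore
\[
r_\sigma(t^{-1}) \;=\; \frac{N_\sigma(t^{-1})}{|\mathbb P^n|(t^{-1})} \;=\; \frac{t^{-n}N_\sigma(t)}{t^{-n}|\mathbb P^n|(t)} \;=\; r_\sigma(t),
\]
which is the desired symmetry; rationality of the values $r_\sigma(q^m)\in\mathbb Q$ is already built into Theorem \ref{thm: Main theorem, density fixing inertia and frobenius coset}. The main obstacle I anticipate is the verification that, for $K\in\mathcal L_\sigma$, Hypothesis \ref{hypothesis:res of sing} genuinely holds for \emph{all} the quotients $X/H_{\tau'}$ and their twists appearing in the inductive chain $\tau'\le\tau$ — i.e.\ controlling the resolution of the resultant-type loci uniformly over all tame residue characteristics — together with the check that these resolutions are of Tate type so that the final expression is an actual rational function rather than merely a palindromic form; this is where one either quotes the earlier tame-prime rationality literature or runs a direct (elementary but somewhat involved) normal-crossings analysis of the discriminant/resultant divisor, postponing the genuinely hard wild-prime resolution to the later sections.
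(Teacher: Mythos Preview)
Your overall shape is correct --- reduce to the $\rhotype{\tau}$ via Lemma~\ref{lemma: relation of bhargava conjecture to our theorem}, use $s(\tau)=s(\tau^{-1})$ so the sum is closed under inversion, and then invoke palindromicity --- but you are working much harder than the paper does, and the hard part you identify as an ``obstacle'' is exactly what the paper sidesteps.

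The paper's proof is a two-line shortcut. First, rationality is simply \emph{quoted} from \cite{john} and \cite{del_corso_dvornicich_2000} for $K\in\mathcal L_\sigma$; there is no attempt to verify Hypothesis~\ref{hypothesis:res of sing} for all tame primes, nor to argue that any strata are of Tate type. Second, once $r_\sigma(t)\in\mathbb Q(t)$ is known, the identity $r_\sigma(t^{-1})=r_\sigma(t)$ is an equality of rational functions and therefore needs only be checked at infinitely many values of $t$. For this, fix any sufficiently large prime $p$ (so that Theorem~\ref{thm: main theorem for almost all primes} applies, via spreading out a characteristic-zero resolution) and run $L$ over all extensions of $\mathbb Q_p$: the palindromicity of $\sum_{s(\tau)=\sigma}\rhotype{\tau}(L)$ together with that of $|\mathbb P^n(\mathbb F_{q^m})|$ gives $r_\sigma(q^{-m})=r_\sigma(q^m)$ for all $m\ge1$, which is already infinitely many points.

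By contrast, you try to establish rationality and the functional equation simultaneously, by arguing that the resolution hypothesis holds for every $K\in\mathcal L_\sigma$ and that the resulting strata have Tate-type cohomology. Neither of these is proved anywhere in the paper for the tame range specifically; the Tate-type resolution of the resultant locus is the content of \S\ref{sec: resolving resultant locus}, where it is built over $\Spec\mathbb Z$ for \emph{all} primes at once (Theorem~\ref{thm: rationality of wild-type densities}). Your parenthetical ``alternatively, one can invoke the prior rationality results\ldots together with Theorem~\ref{thm: main theorem for almost all primes}'' is in fact the entire proof --- promote that to the main argument and drop the direct verification.
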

\begin{proof}
    By \cite{john}, \cite{del_corso_dvornicich_2000}, $\rho(n,\sigma;q)$ is known to be a rational function for the set of local fields considered in the theorem. It thus suffices to prove the above symmetry after evaluating $t$ at infinitely many values $q$ coming from $\mathcal L_\sigma$  since rational functions are determined by their values at any large enough set of points. 
    
    In particular, we can fix an arbitrarily large $p$ and consider all the extensions of $\mathbb Z_p$. For this set of extensions, we know by Theorem \ref{thm: main theorem for almost all primes} and Lemma \ref{lemma: relation of bhargava conjecture to our theorem} that $\rho(n,\sigma;p)$ is a palindromic form (over $\mathbb Q$) of weight $1$ since $\rhotype{\tau}(\F)$ and $|\mathbb P^n(\mathbb F_q)|$ are palindromic forms of weight $n$ so that their ratio is a palindromic form of weight $1$. In other words, this shows that
    \[r_\sigma(q^{-1}) = r_\sigma(q)\]
    as $q$ ranges over the powers of the prime $p$, thus completing the proof.
\end{proof}

The above theorem very quickly deals with ``tame" primes as a combination of earlier results and the general theorems in this paper. Unfortunately, this is the limit of this direct application of our general theorems in this context since the splitting densities $\rhotype{\tau}(K)$ fail to be rational functions in general - they are only rational along arithmetic progressions. 

In the final part of this section, we relate the factorization densities more directly to certain integrals over a particularly nice class of quotients. 

\subsection{Reducing the computation of factorization densities to certain integrals}\label{sec: relating factorization densities to integrals}

For a factorization type $\sigma = \{f_1^{e_1},\dots,f_r^{e_r}\}$ of degree $n$, we define some associated notions. We pick a partition $\mathcal P_\sigma$ of $[n] = \{1,\dots,n\}$ with distinct parts $\pi^{\sigma}_{i,j}$ of size $e_i$ for $1\leq i\leq r$ and $1 \leq j \leq f_i$. If required, we will express the $f_i,e_i,r$ by $f_i(\sigma),e_i(\sigma),r(\sigma)$ to make the dependence on $\sigma$ clear. Note that this partition is well defined up-to $S_n$ conjugacy and indeed, all the constructions here will be well defined precisely up to this relabelling.

We next define an associated admissible pair $\tau_\sigma = (H_\sigma,g_\sigma H_\sigma)$ where $H_\sigma$ is the maximal subgroup of $S_n$ preserving the partition $\mathcal P_\sigma$, i.e., $H_\sigma \cong \prod_{i=1}^r(S_{e_i}^{f_i})$ while the coset $g_\sigma H_\sigma$ cyclically permutes the cosets in the order $\pi^\sigma_{i,1},\dots,\pi^{\sigma}_{i,f_i}$ for each $i$.

Since everything is only well defined up to conjugacy, we say that $A \lesssim B$ if $A$ is less than some $S_n$-conjugate of $B$. The mapping $\tau \to s(\tau)$ is adjoint to $\sigma \to \tau_\sigma$ in the following sense. 

\begin{lemma}\label{lemma: adjointness between pairs and types}
For any admissible pair $\tau$ for $S_n$ and factorization type $\sigma$, $\tau \lesssim \tau_\sigma$ if and only if $\tau_{s(\tau)} \lesssim \tau_{\sigma}$.
\end{lemma}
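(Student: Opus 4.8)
The plan is to deduce both directions of the equivalence from one explicit description of the pair $\tau_{s(\tau)}$, together with the observation that $\tau\lesssim\tau_{s(\tau)}$ always. Fix an admissible pair $\tau=(H_\tau,g_\tau H_\tau)$ for $S_n$, let $\Pi_1,\dots,\Pi_r$ be the orbits of $H_{1,\tau}$ on $[n]$, and inside each $\Pi_i$ let $\Pi_{i,1},\dots,\Pi_{i,f_i}$ be the orbits of $H_\tau$ (all of common size $e_i$, as in Definition \ref{defn: factorization types from admissible pairs}). Since $C_\tau=H_{1,\tau}/H_\tau$ is cyclic and acts transitively on $\{\Pi_{i,1},\dots,\Pi_{i,f_i}\}$, this action factors through a regular action of a cyclic quotient of $C_\tau$ of order $f_i$; as the image of $g_\tau$ generates that quotient, $g_\tau$ permutes the $f_i$ orbits within $\Pi_i$ as a single $f_i$-cycle, so after relabelling we may assume $g_\tau(\Pi_{i,j})=\Pi_{i,j+1}$ cyclically in $j$. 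Set $\widehat\tau\coloneqq\bigl(\prod_{i,j}\operatorname{Sym}(\Pi_{i,j}),\ g_\tau\cdot\prod_{i,j}\operatorname{Sym}(\Pi_{i,j})\bigr)$. I would then check: (i) $\widehat\tau$ is an admissible pair, because $g_\tau$ normalizes $\prod_{i,j}\operatorname{Sym}(\Pi_{i,j})$ (it permutes the orbits $\Pi_{i,j}$); (ii) $\tau\le\widehat\tau$, since $H_\tau$ fixes each $\Pi_{i,j}$ setwise and $g_\tau$ lies in its own coset; (iii) $\widehat\tau$ is $S_n$-conjugate to $\tau_{s(\tau)}$, because both pairs have the same shape — inertia equal to the full stabilizer of a partition consisting of $f_i$ parts of size $e_i$ in each of $r$ blocks, the $i$-th block of size $e_if_i$ being a single part of the factorization type $s(\tau)$, and distinguished coset cyclically advancing the parts within each block in the standard order.

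Granting (i)--(iii), the lemma follows formally. For the implication $\Leftarrow$: if $\tau_{s(\tau)}\lesssim\tau_\sigma$, then since $\tau\le\widehat\tau$ and $\widehat\tau$ is $S_n$-conjugate to $\tau_{s(\tau)}$ we get $\tau\lesssim\tau_{s(\tau)}\lesssim\tau_\sigma$, and $\lesssim$ is transitive. For the implication $\Rightarrow$: assume $\tau\lesssim\tau_\sigma$; replacing $\tau$ by an $S_n$-conjugate we may take $\tau\le\tau_\sigma$ for the representative $(H_\sigma,g_\sigma H_\sigma)$ with $H_\sigma=\prod_{i,j}\operatorname{Sym}(\pi^{\sigma}_{i,j})$. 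Then $H_\tau\subseteq H_\sigma$ forces each orbit $\Pi_{i,j}$ of $H_\tau$ to lie inside a single part $\pi^{\sigma}_{i',j'}$, so $\prod_{i,j}\operatorname{Sym}(\Pi_{i,j})\subseteq H_\sigma$; combined with $g_\tau\in g_\sigma H_\sigma$ this gives $\widehat\tau\le(H_\sigma,g_\sigma H_\sigma)$, whence $\tau_{s(\tau)}\sim\widehat\tau\le\tau_\sigma$, i.e.\ $\tau_{s(\tau)}\lesssim\tau_\sigma$.

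The only substantive point will be (iii): identifying $\widehat\tau$ with $\tau_{s(\tau)}$ up to conjugacy, which amounts to verifying that the partition of $[n]$ into $H_\tau$-orbits, grouped by $H_{1,\tau}$-orbits and ordered so that $g_\tau$ advances the parts, has exactly the combinatorial shape prescribed by Definition \ref{defn: factorization types from admissible pairs} and by the construction of $\tau_\sigma$, and that the induced cyclic actions on the parts agree. The genuinely group-theoretic input is the claim that a generator of a cyclic group acting transitively on a finite set acts as a full cycle; everything else is bookkeeping with $S_n$-conjugacy classes, which is harmless since $\lesssim$ is conjugation-invariant by definition.
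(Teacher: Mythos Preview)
Your proposal is correct and follows essentially the same approach as the paper. The paper's proof is a terse three sentences: it observes that $\tau\lesssim\tau_\sigma$ is equivalent to the partition $\mathcal P_\tau$ of $[n]$ into $H_\tau$-orbits admitting a coarsening to $\mathcal P_\sigma$ with the cosets inducing the same cyclic permutation on parts, and then notes that $\tau$ and $\tau_{s(\tau)}$ determine the same partition and the same induced cyclic permutation. Your object $\widehat\tau$ is precisely the explicit realization of $\tau_{s(\tau)}$ on the partition $\mathcal P_\tau$, and your steps (i)--(iii) together with the two formal implications unpack exactly what the paper compresses into that last sentence.
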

\begin{proof}
The admissible pair $\tau$ induces a partition $\mathcal P_\tau$ of $[n]$ corresponding to the orbits of $H_\tau$ on $[n]$ as in Definition \ref{defn: factorization types from admissible pairs}. The inequality $\tau \lesssim \tau_\sigma$ is equivalent to there being a coarsening of $\mathcal P_\tau$ to the partition $\mathcal P_\sigma$ such that the coset $g_\tau H_\tau$ induces the same permutation as $g_\sigma H_\sigma$. Since $\tau$ and $\tau_{s(\tau)}$ induce the same partition of $[n]$ and the same induced cyclic permutation on the set of cosets, the lemma follows. 
\end{proof}

We define a partial order on the set of factorization types by 
\[\sigma' \leq \sigma \iff \tau_{s(\sigma')} \lesssim \tau_{s(\sigma)}.\]
We can then rephrase the above lemma as
\[\tau \lesssim \tau_\sigma \iff s(\tau) \leq \sigma.\]

We need one more lemma before the main theorems of this section.

\begin{lemma}\label{lemma: alpha invariant under adjointness}
The invariant $\alpha$ from definition \ref{defn: alpha, beta} satisfy the following identity
\[\alpha(\tau,\tau_\sigma) = \alpha(\tau_{s(\tau)},\tau_\sigma).\]
\end{lemma}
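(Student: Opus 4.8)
The plan is to prove the stronger statement that the two subsets of $S_n$ defining the two values of $\alpha$ coincide, for a suitably chosen representative of the conjugacy class of $\tau_{s(\tau)}$. First note that, with $\tau_\sigma$ held fixed, $\alpha(\,\cdot\,,\tau_\sigma)$ depends on its first argument only through its $S_n$-conjugacy class: in the defining set $\{\lambda\in H_\sigma\backslash S_n:\lambda\tau'\lambda^{-1}\leq\tau_\sigma\}$ (where $H_\sigma:=H_{\tau_\sigma}$) the substitution $\lambda\mapsto\lambda\mu$ gives a bijection between the index sets for $\tau'$ and $\mu\tau'\mu^{-1}$. So it suffices to fix one representative of $\tau_{s(\tau)}$ and compare it with $\tau$.

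I will use $\widehat\tau:=(\widehat H,\,g_\tau\widehat H)$, where $\widehat H:=\prod_{P}\operatorname{Sym}(P)$ is the Young subgroup on the partition of $[n]$ into the orbits $P$ of $H_\tau$. Since $g_\tau$ normalizes $H_\tau$ it permutes these orbits, hence normalizes $\widehat H$, so $\widehat\tau$ is an admissible pair; since $H_\tau$ fixes each of its own orbits setwise while $H_{1,\tau}=\langle H_\tau,g_\tau\rangle$ is transitive on each of its orbits, $g_\tau$ cyclically permutes the $H_\tau$-orbits lying inside a fixed $H_{1,\tau}$-orbit, and then comparison with Definition~\ref{defn: factorization types from admissible pairs} and the construction of $\tau_\sigma$ in \S\ref{sec: relating factorization densities to integrals} identifies $\widehat\tau$ as one of the allowed representatives of $\tau_{s(\tau)}$. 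Moreover $\tau=(H_\tau,g_\tau H_\tau)\leq(\widehat H,g_\tau\widehat H)=\widehat\tau$ trivially, as $H_\tau\subseteq\widehat H$ and the two Frobenius cosets share the generator $g_\tau$.

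The key step is the equality of subsets of $S_n$
\[
\{\lambda:\lambda\tau\lambda^{-1}\leq\tau_\sigma\}=\{\lambda:\lambda\widehat\tau\lambda^{-1}\leq\tau_\sigma\}.
\]
Here $\supseteq$ is immediate from $H_\tau\subseteq\widehat H$ and $g_{\widehat\tau}=g_\tau$. For $\subseteq$, suppose $\lambda\tau\lambda^{-1}\leq\tau_\sigma$, i.e.\ $\lambda H_\tau\lambda^{-1}\subseteq H_\sigma$ and $\lambda g_\tau\lambda^{-1}\in g_\sigma H_\sigma$. Since $H_\sigma=\prod_{i,j}\operatorname{Sym}(\pi^\sigma_{i,j})$ fixes every part $\pi^\sigma_{i,j}$ of $\mathcal P_\sigma$ setwise, so does the subgroup $\lambda H_\tau\lambda^{-1}$; hence each of its orbits on $[n]$ — and these are exactly the sets $\lambda P$ as $P$ ranges over the orbits of $H_\tau$ — lies in a single part of $\mathcal P_\sigma$. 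Therefore $\operatorname{Sym}(\lambda P)\subseteq H_\sigma$ for every such $P$, and since these commuting subgroups generate $\lambda\widehat H\lambda^{-1}$, we get $\lambda\widehat H\lambda^{-1}\subseteq H_\sigma$; combined with $\lambda g_\tau\lambda^{-1}\in g_\sigma H_\sigma$ this says $\lambda\widehat\tau\lambda^{-1}\leq\tau_\sigma$. Because membership in each of these sets depends only on the coset $H_\sigma\lambda$, the displayed equality descends to an equality of subsets of $H_\sigma\backslash S_n$; taking cardinalities gives $\alpha(\tau,\tau_\sigma)=\alpha(\widehat\tau,\tau_\sigma)=\alpha(\tau_{s(\tau)},\tau_\sigma)$.

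I expect the one delicate point to be the bookkeeping in the second paragraph — verifying that $\widehat\tau$ really represents $\tau_{s(\tau)}$, i.e.\ that the $H_\tau$-orbits inside each $H_{1,\tau}$-orbit all have the same size, are cyclically permuted by $g_\tau$, and are grouped exactly into the blocks prescribed by $s(\tau)$. This is a direct consequence of the normality of $H_\tau$ in $H_{1,\tau}$ together with the fact that $H_\tau$ fixes each of its orbits setwise, so it should go through cleanly; everything else is formal manipulation with the partial order on admissible pairs.
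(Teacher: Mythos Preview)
Your proof is correct and follows essentially the same approach as the paper's: both arguments rest on the observation that, because $H_\sigma$ is the full Young subgroup of the partition $\mathcal P_\sigma$, the condition $\lambda H_\tau\lambda^{-1}\subseteq H_\sigma$ depends only on the orbit partition of $H_\tau$ (and the Frobenius-coset condition on $g_\tau$ is shared by $\tau$ and $\widehat\tau$). The paper states this in one sentence by saying that $\tau$ and $\tau_{s(\tau)}$ ``induce the same partition of $[n]$ and the same induced cyclic permutation''; you have simply unpacked that sentence into an explicit equality of index sets, which is a useful level of detail.
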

\begin{proof}
By definition, $\alpha(\tau,\tau_\sigma)$ is the number of elements in $H_\sigma\backslash S_n$ which induce a permutation of the partition $\mathcal P_\tau$ while preserving the properties that $\mathcal P_\tau$ is a refinement of $\mathcal P_\sigma$ and the induced cyclic permutation of $g_\tau$ on $\mathcal P_\sigma$ equals the induced cyclic permutation of $g_\sigma$ on $\mathcal P_\sigma$. Since $\tau$ and $\tau_{s(\tau)}$ induce the same partition of $[n]$ and the same induced cyclic permutation, the lemma follows.
\end{proof}

For each factorization type $\sigma$, we denote a resolution of the quotient map by $f_\sigma: X_\sigma \to {^{g_\sigma}}\left((\mathbb P^1)^n/H_{\sigma}\right) \to \mathbb P^n$ where $X_\sigma$ is a nice resolution of singularities that we assume exists (as in the rest of this paper).

\begin{thm}
For any factorization type $\sigma$ of degree $n$, we have the identity
\[\eta_{f_\sigma}(K) = \sum_{\substack{\tau : s(\tau) \leq \sigma\\\text{up to } S_n \text{ conjugacy}}}\alpha(\tau,\tau_\sigma)\rho_{f_\sigma,\tau}(K) = \sum_{\sigma'\leq \sigma}\alpha(\tau_{\sigma'},\tau_{\sigma})|\mathbb P^n(\mathbb F_p)|\rho(n,\sigma;p).\]
\end{thm}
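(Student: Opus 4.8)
The plan is to read the first equality off the covering‑space computation in the proof of Theorem~\ref{thm: incidence identity}, applied to the $S_n$‑Galois map $f:(\bbP^1)^n\to\bbP^n$ and the admissible pair $\tau_\sigma$, and to obtain the second by grouping terms via Lemmas~\ref{lemma: alpha invariant under adjointness} and~\ref{lemma: relation of bhargava conjecture to our theorem}. First I would reduce $\eta_{f_\sigma}$ to an integral over the unramified locus: the complement $X_\sigma\setminus f_\sigma^{-1}(\U{f})$ is the preimage under the dominant map $f_\sigma$ of the proper closed subscheme $\bbP^n\setminus\U{f}$ (the discriminant hypersurface), hence a proper closed subscheme of the irreducible variety $X_\sigma$, so its $K$‑points have $\mu_{X_\sigma}$‑measure zero by Theorem~\ref{thm: change of variables}(1), and therefore
\[\eta_{f_\sigma}(K)=\int_{X_\sigma(K)}|\Jac(f_\sigma)|\,d\mu_{X_\sigma}=\int_{f_\sigma^{-1}(\U{f})(K)}|\Jac(f_\sigma)|\,d\mu_{X_\sigma}.\]
Since $S_n$, hence $H_\sigma$, acts freely on the fibres of $f$ over the squarefree locus $\U{f}$, the quotient $(\bbP^1)^n/H_\sigma\to\bbP^n$ is finite étale over $\U{f}$, so the $g_\sigma$‑equivariant resolution $X_\sigma\to{}^{g_\sigma}((\bbP^1)^n/H_\sigma)$ is an isomorphism over $f_\sigma^{-1}(\U{f})$; thus $f_\sigma$ restricted to $f_\sigma^{-1}(\U{f})$ is exactly the unramified $g_\sigma$‑twist of the étale quotient $f^{-1}(\U{f})/H_\sigma\to\U{f}$, i.e.\ the covering map $f_\nu$ with $\nu=\tau_\sigma$ that appears in the proof of Theorem~\ref{thm: incidence identity}.

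By that proof, $f_\nu$ is a covering of $K$‑analytic manifolds whose image is the disjoint union $\bigsqcup\Utype{\tau'}(K)$ over $S_n$‑conjugacy classes $[\tau']$ with $\tau'\lesssim\tau_\sigma$, and whose degree over $\Utype{\tau'}(K)$ is $\alpha([\tau',\tau_\sigma])$; integrating the change‑of‑variables identity of Theorem~\ref{thm: change of variables}(2) against a gauge form for $\mu_{\bbP^n}$ over a cover by disjoint opens yields
\[\eta_{f_\sigma}(K)=\sum_{[\tau']\,:\,\tau'\lesssim\tau_\sigma}\alpha([\tau',\tau_\sigma])\,\rhotype{\tau'}(K).\]
Concretely this is Theorem~\ref{thm: incidence identity} for $\tau_\sigma$: its sum over \emph{all} $\tau'\le\tau_\sigma$ weighted by $\gamma=\alpha/\beta$ collapses to the sum over conjugacy classes weighted by $\alpha$, because a class $[\tau']$ contributes exactly $\beta([\tau',\tau_\sigma])$ terms to the unreduced sum while $\alpha$, $\gamma$ and $\rhotype{\tau'}(K)$ are unchanged under conjugating $\tau'$. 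By Lemma~\ref{lemma: adjointness between pairs and types}, in the form $\tau'\lesssim\tau_\sigma\iff s(\tau')\le\sigma$, the index set equals $\{[\tau']:s(\tau')\le\sigma\}$, which is the first displayed equality (with $\rho_{f_\sigma,\tau}$ read as the density $\rhotype{\tau}$ of Definition~\ref{defn: rho_f,admissible pair}).

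For the second equality, partition $\{[\tau']:s(\tau')\le\sigma\}$ according to the value $\sigma'=s(\tau')\le\sigma$. For each $\tau'$ with $s(\tau')=\sigma'$, Lemma~\ref{lemma: alpha invariant under adjointness} gives $\alpha([\tau',\tau_\sigma])=\alpha([\tau_{\sigma'},\tau_\sigma])$, hence
\[\eta_{f_\sigma}(K)=\sum_{\sigma'\le\sigma}\alpha([\tau_{\sigma'},\tau_\sigma])\sum_{[\tau']\,:\,s(\tau')=\sigma'}\rhotype{\tau'}(K),\]
and Lemma~\ref{lemma: relation of bhargava conjecture to our theorem} rewrites the inner sum as $|\bbP^n(\bbF_q)|\,\rho(n,\sigma';q)$; this is the claimed second identity (with $\sigma'$, not $\sigma$, in the last factor).

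I do not expect a genuine obstacle: all the substantive content is already in Theorem~\ref{thm: incidence identity} and Lemmas~\ref{lemma: adjointness between pairs and types}, \ref{lemma: alpha invariant under adjointness}, \ref{lemma: relation of bhargava conjecture to our theorem}. The one point that needs care is the bookkeeping in the middle step — aligning the twist/inversion conventions so that $f_\sigma|_{f_\sigma^{-1}(\U{f})}$ is identified with the correct $f_\nu$, and converting Theorem~\ref{thm: incidence identity}'s $\gamma$‑weighted sum over all $\tau'\le\tau_\sigma$ into the $\alpha$‑weighted sum over conjugacy classes. If the conventions in fact place $\tau_\sigma^{-1}$ (resp.\ $\rhotype{(\tau')^{-1}}$) where I have written $\tau_\sigma$ (resp.\ $\rhotype{\tau'}$), this is harmless: inversion‑invariance of $\alpha$ together with Lemma~\ref{lemma: alpha invariant under adjointness} forces $\alpha([\tau',\tau_\sigma])$ to depend only on $s(\tau')$, and $[\tau]\mapsto[\tau^{-1}]$ preserves both $s$ and the fibres of $s$, so each displayed identity holds exactly as written.
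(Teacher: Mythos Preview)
Your proof is correct and follows essentially the same approach as the paper: the paper's own proof simply cites Theorem~\ref{thm: incidence identity} and Lemma~\ref{lemma: adjointness between pairs and types} for the first equality and Lemmas~\ref{lemma: alpha invariant under adjointness} and~\ref{lemma: relation of bhargava conjecture to our theorem} for the second, and you unpack exactly these references. Your explicit collapse of the $\gamma$-weighted sum over all $\tau'\le\tau_\sigma$ into the $\alpha$-weighted sum over conjugacy classes, and your care with the $\tau_\sigma$ versus $\tau_\sigma^{-1}$ conventions (resolved via the inversion-invariance of $s$ and of $\alpha$), supply details the paper leaves implicit but do not change the route.
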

\begin{proof}
The first identity follows immediately from Theorem \ref{thm: incidence identity} and Lemma \ref{lemma: adjointness between pairs and types}. The second identity follows from Lemma \ref{lemma: alpha invariant under adjointness} and Lemma \ref{lemma: relation of bhargava conjecture to our theorem}.
\end{proof}

Finally, we note as before that since $\alpha(\tau_{\sigma'},\tau_{\sigma}) > 0$ for all $\sigma' \leq \sigma$, we can M\"obius invert the above system of identities with respect to the poset of factorization types to obtain
\begin{equation}\label{eqn: factorization densities as integrals}
    \rho(n,\sigma;q) = \frac{1}{|\mathbb P^n(\mathbb F_q)|}\sum_{\sigma'\leq \sigma}\alpha^{-1}(\tau_{\sigma'},\tau_{\sigma})\eta_{f_{\sigma'}}(K)
\end{equation}
where $\alpha^{-1}$ is the inverse of $\alpha$ in the incidence poset of factorization types. We have thus shown that the conjecture on factorization densities is equivalent to showing that the integrals $\eta_{f_\sigma}(K)/|\mathbb P^n(\mathbb F_q)|$ are rational functions in $q$ invariant under the transformation $q \to q^{-1}$.

We consider $\mathbb P^n$ as the parameter space for degree at most $n$ univariate polynomials as usual so that $\prod_{i=1}^r\mathbb P^{n_i}$ parametrizes tuples $(h_1,\dots,h_r)$ of polynomials. We have reduced the proof of rationality to finding a resolution for the complement of the \emph{resultant locus} $\mathcal R \subset \mathbb \prod_{i=1}^rP^{n_i}$ corresponding to the locus where two of polynomials $h_i,h_j$ share a common root. Crucially, we want such a compactification over $\Spec \mathbb Z$ (or at least over an open cover).

This problem is closely analogous to the problem of finding a resolution of the discriminant locus $\mathcal D \subset \mathbb P^n$ corresponding to polynomials $h$ having repeated roots and the resolution we describe in the next section also applies to the discriminant locus.

\section{Constructing a resolution of the resultant locus}\label{sec: resolving resultant locus}

In this section, we describe a procedure to resolve the resultant locus and more generally, to resolve \emph{skew-conically} stratified varieties which will include as a special case the discriminant locus. Our construction will be a modification of the resolution described in \cite{macpherson1998making} and as such, we will explain their paper in brief first.

\subsection{Making conical stratifications wonderful}

Their paper deals with smooth varieties $X$ with a conical stratification. They work over $\mathbb C$ with analytic neighbourhoods but in fact, their methods can be easily adapted to work in arbitrary generality with (formally) completed algebraic neighbourhoods. Throughout this subsection, we let $(X,S_{\alpha} : \alpha \in I)$ be a smooth variety with the $S_\alpha \subset X$ locally closed strata, i.e., the $S_\alpha$ are disjoint  with $\bigcup_\alpha S_\alpha = X$ and the following relation defines a partial order on the set $I$: 
$$\alpha \leq \beta \iff \overline{S}_\alpha \subset \overline{S}_\beta.$$ 

\begin{definition}[cone]
    Let $(D,S_\alpha)$ be a formal analytic neighbourhood of a smooth, stratified space $(X,S_\alpha)$. It is said to be a cone if the origin is a closed strata and moreover, the strata $S_\alpha$ are stable under the standard diagonal action of the torus $\mathbb G_m$ on the formal disc $D$. This is equivalent to the data of a stratification on the projective space defined by the quotient of $D-0$ by $\mathbb G_m$.
\end{definition}

\begin{definition}[conical stratification]
    $(X,S_\alpha)$ as above is said to be conically stratified if for every point $x \in X$, the formal analytic neighbourhood $D_x$ around $x$ can be decomposed as $D_x \cong D_T \times D_N$ where the tangent disc $D_T$ is the formal completion \emph{inside} the strata $S_\alpha \ni x$ and has the trivial stratification (i.e., the origin is the only proper strata) and moreover, the normal disc $D_N$ is stratified as a cone.
\end{definition}


\begin{definition}[wonderful compactification]
    $(X,S_\alpha)$ as above is said to be a wonderful compactification if the $S_\alpha$ arise as intersections of (geometric) snc divisors.
\end{definition}

We note that constructing a wonderful compactification is exactly our goal in finding a good resolution for our applications earlier in the paper.

We now explain the methods of \cite{macpherson1998making} where they show that any conically stratified variety $(X,S_\alpha)$ has a resolution $(\tilde{X},\tilde{S}_\alpha) \to (X,S_\alpha)$ where $(\tilde{X},\tilde{S}_\alpha)$ is a wonderful compactification. They identify the following special \emph{minimal irreducible} strata:

\begin{definition}[minimal irreducible strata]
    Let $(X,S_\alpha)$ be as above. We say a strata $S_\alpha$ is reducible if for every point $x \in S_\alpha$, the local decomposition $D_x \cong D_T \times D_N$ has the property that the cone $(D_N,S_\alpha \cap D_N)$ can be further factored into stratified cones as 
    \[ D_N \cong D_{N_1} \times \dots \times D_{N_r} \hspace{10 mm} (r \geq 2);\]
    where the isomorphisms preserve the stratifications. If such a non-trivial decomposition is not possible, $S_\alpha$ is said to be irreducible. Moreover, if every strata $S_\beta$ with $S_\beta \subset \overline{S}_\alpha$ is reducible, then $S_\alpha$ is said to be a minimal irreducible strata.
\end{definition}

The strategy for producing a resolution is now simple: We blow up the closures of the minimal irreducible strata of codimension greater than $1$ (in any order) until there are none left, at which point we have a wonderful compactification. Three main things have to be checked for this to work:
\begin{lst}\label{item: claims to check for resolution}
\begin{enumerate}
    \item The closures of the minimal irreducible strata are smooth.
    \item The blowup at each minimal irreducible strata has an induced conical stratification.
    \item After blowing up, the number of minimal irreducible strata of codimension $\geq 2$ decreases.
\end{enumerate}
\end{lst}

The first claim is checked in \cite[\S 2.3]{macpherson1998making} and relies on formal properties of stratification. The proof goes through just as well with formal analytic neighbourhoods instead of complex balls. 

The argument \cite[\S 2.2]{macpherson1998making} for the second claim reduces to showing that the blowup of a conically stratified formal neighbourhood is also conical. This relies on locally realizing the blowup as the tautological bundle over projective space where this base projective space inherits the induced stratification from the quotient of the cone by $\mathbb G_m$.

The third claim follows from \cite[\S 2.2, Lemma]{macpherson1998making} - the irreducible strata of the blow-up can be explicitly characterized.

There are a few more things to be checked (such as the fact that a conical stratification is wonderful if and only if all the minimal irreducible strata are of codimension $1$) but these easily adapt to our setting.

\subsection{Skew-conical compactifications}

One key example of a conical compactification comes from configuration space on a smooth variety $X$, such as $((\mathbb P^1)^n,S_\pi)$ where the strata are indexed by the partition $\pi$ corresponding to the indices of equal co-ordinates; the largest open strata is configuration space. 

This is very close to the space we need to resolve as described in the previous section - we instead need to consider the quotient of $(\mathbb P^1)^n$ by $\prod_{i=1}^r S_{n_i}$ and unfortunately, the quotient does not have a conical stratification - only a \emph{skew-conical stratification} as defined below.

In this section, we will need to work with Artin stacks for technical reasons and throughout, will work over the base ring $\mathbb Z$. The Artin stacks that appear will be global quotients of the form $\mathscr Y = Y/T$ for $Y$ a smooth scheme and $T$ a torus acting with finite stabilizers and we will denote such stacks by a \emph{space} in this subsection. A stratification of such a space will be a $T$ equivariant stratification $S_\alpha$ of $Y$.

The only technical point is to define the appropriate notion of a formal neighbourhood on an Artin stack. Here, we take the easy way out since our Artin stacks $\mathscr Y$ are in fact global quotient stacks. Given a point $y \in \mathscr Y$, we consider a neighbourhood of it to be the tubular neighbourhood of the preimage of $y$ in $Y$. Since the stratification on $Y$ is $T$ equivariant, the decompositions into tangential and normal formal discs can be carried out as before.

\begin{definition}[skew-cones]
    We say that a formal neighbourhood $(D,S_\alpha)$ of a stratified space $\mathscr Y$ is a skew-cone if $0$ is a strata and moreover, the strata $S_\alpha$ are stable for some nontrivial $\mathbb G_m$ action on $D$: in other words, we can find co-ordinates $x_1,\dots,x_n$ with $\mathbb G_m$ acting by $t \cdot x_i = t^{n_i}x_i$ with all the $n_i > 0$.
\end{definition}

\begin{definition}[skew-conical stratification]
    A space $(\mathcal Y,S_\alpha)$ is said to be skew-conically stratified if around every point $x \in \mathcal Y$, we have a decomposition of the formal neighbourhood $D_x \cong D_T \times D_N$ as before where we now require the normal direction $D_N$ to be a \emph{skew} cone.
\end{definition}

We will explain how to resolve such skew-conical stratifications by modifying the methods of \cite{macpherson1998making} in the remainder of this section. We will not provide as many details in this section since they consist of modifications to the arguments of \cite{macpherson1998making} and checking that these modifications do work. We explain the modifications needed and leave the checking to the reader since the proofs are almost unchanged.

First, we discuss our prototypical example:

\begin{example}
    The central example for us will be $X = \prod_{i=1}^r\mathbb P^{n_i}$ with strata $S_\alpha$ where $\alpha$ is a partition of $\sum_i n_i$, as in the configuration space example. Indeed, these strata are the quotients of partition strata $S_\pi$ appearing in the configuration space stratification on $\mathbb P^{\sum n_i}$ described above. 
\end{example}

In order to see that this is a skew-conical compactification, consider the quotient map
\[f: Y = \mathbb P^{\sum n_i} \to X = \prod_{i=1}^r\mathbb P^{n_i}.\]
Let $x \in S_\alpha \subset X$ be a point with local formal neighbourhood $D_x$. Let $y \in T_\pi = f^{-1}(S_\alpha)$ map to $x \in S_\alpha$. For $G \subset \prod_{i=1}^rS_{n_i}$ the isotropy group of $y$ (or equivalently of $T_\pi$), we pick a decomposition $D_y \cong \tilde D_T \times \tilde D_N$ such that $G$ acts on both $\tilde D_T$ and $\tilde D_N$ with the isomorphism equivariant for this action. Since $G$ fixes $T_\pi$ pointwise, it acts trivially on $\tilde D_T$. Moreover, the co-ordinates of $\tilde D_N$ are linear combinations, and in fact differences, of the ``standard" co-ordinates on $Y$ so that $G$ acts linearly on these co-ordinates.

This induces an isomorphism $D_x \cong \tilde{D}_T/G\times \tilde D_N/G$. Since $G$ acts trivially on $\tilde D_T$, $\tilde D_T/G = \tilde D_T$. On the other hand, since $G$ acts linearly on the co-ordinates of $\tilde D_N$, one can find a system of generators for functions on $\tilde D_N/G$ that are monomials in the co-ordinates on $\tilde D_N$. In particular, this implies that the diagonal $\mathbb G_m$ action on $\tilde D_N$ descends to a skew $\mathbb G_m$ action on $\tilde D_N/G$ (by acting on these monomial co-ordinates).

\begin{remark}\label{rmk: resolving discriminant locus}
    In the case where $r=1$ so that $X= \mathbb P^n$, the largest closed strata corresponds to the discriminant locus corresponding to degree $n$ polynomials with multiple roots. The remainder of this section will in particular, show a method to resolve this discriminant locus. In general, one obtains a resolution of the ``resultant locus."
\end{remark}

For technical reasons, the resolution passes through Artin stacks in the intermediate steps and we obtain an algebraic space in the final step. The earlier results of the paper continue to work in the settings of algebraic spaces since $p$-adic integration is perfectly well defined in this context (see \cite[\S2.1]{groechenig2020geometric} ). In more detail, the problem with modifying the argument in \cite{macpherson1998making} is in step two of \ref{item: claims to check for resolution}. Since the $\mathbb G_m$ action is skew, the standard blow-up will not inherit this skew action. The modification is simple - we use a weighted blow-up instead, where the weights are determined by the characters of the $\mathbb G_m$ action. As before, a weighted blow-up is a torsor over weighted projective space which is sufficient to show that the action can be extended to the blow up.

Unfortunately, we have the technical complication that weighted blow-ups are never smooth for non-trivial weights. In order to fix this, we move to the world of stacks where indeed, the weighted blow up stack is always smooth. The rest of the argument in \cite{macpherson1998making} goes through verbatim to produce a smooth Artin stack $(\tilde{\mathscr X},\tilde S_\alpha)$ and a resolution $\tilde{X} \to X$ respecting the stratifications.

Finally, we \emph{destackify} the above resolution to produce a honest resolution by smooth algebraic spaces using the recent paper \cite{bergh2019functorial}.

\begin{thm}[resolving skew-conically stratified spaces]
    Let $(X,S_\alpha)$ be a skew-conically stratified space. Then there exists a resolution $(\tilde{X},\tilde{S}_\alpha) \to (X,S_\alpha)$ by an algebraic space where $\tilde S_\alpha$ is a wonderful compactification.
\end{thm}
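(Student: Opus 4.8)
The plan is to follow the strategy of \cite{macpherson1998making} described above, carrying out the ``blow up the closures of minimal irreducible strata'' loop, but replacing ordinary blow-ups with weighted blow-ups of Artin stacks and then destackifying at the end. First I would fix the starting datum: a skew-conically stratified space $(X, S_\alpha)$, presented as a global quotient $X = Y/T$ with $Y$ smooth and $T$ a torus acting with finite stabilizers, together with the $T$-equivariant stratification on $Y$. As in \cite{macpherson1998making}, I would identify the \emph{minimal irreducible strata} of codimension $\geq 2$; these are intrinsic to the stratified space and their closures are smooth by the argument of \cite[\S 2.3]{macpherson1998making}, whose proof is formal and goes through verbatim with formal-analytic (equivalently, completed algebraic) neighbourhoods in place of complex balls, and which is insensitive to the skewness of the cone since it only uses the product decomposition $D_x \cong D_T \times D_N$.

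The key modification is in the analogue of claim (2) of \ref{item: claims to check for resolution}: the blow-up of a skew-conically stratified neighbourhood must again be skew-conically stratified. Here I would replace the ordinary blow-up along the closure of a minimal irreducible stratum by the \emph{weighted} blow-up, with weights read off from the characters $n_i$ of the $\mathbb G_m$-action on the normal skew-cone $D_N$. Locally such a weighted blow-up is a torsor over weighted projective space, and exactly as in \cite[\S 2.2]{macpherson1998making} the skew $\mathbb G_m$-action on the cone extends to this torsor; the induced stratification on the exceptional weighted projective space is the one coming from the quotient of the punctured skew-cone by $\mathbb G_m$, and one checks (as in the loc. cit. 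Lemma) that it is again skew-conical and that the collection of minimal irreducible strata of codimension $\geq 2$ strictly decreases. The only genuinely new wrinkle is that a weighted blow-up of a scheme (or algebraic space) is singular for nontrivial weights, so I would perform it as a weighted blow-up \emph{stack}, which is smooth; since all the intermediate objects are global quotients $Y'/T'$ by tori acting with finite stabilizers, the formal-neighbourhood formalism set up in the excerpt still applies and the inductive step can be iterated. After finitely many such weighted blow-ups — termination being guaranteed by the decrease of the count of bad minimal irreducible strata, just as in \cite{macpherson1998making} — one arrives at a smooth Artin stack $(\widetilde{\mathscr X}, \widetilde S_\alpha)$ with $\widetilde S_\alpha$ a wonderful compactification (all minimal irreducible strata now of codimension $1$, i.e. the strata are intersections of geometric snc divisors) and a proper birational stratification-preserving map $\widetilde{\mathscr X} \to X$.

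The final step is to pass from the smooth Artin stack to an honest algebraic space. For this I would invoke the functorial destackification theorem of \cite{bergh2019functorial}: applied to $\widetilde{\mathscr X}$ it produces, after a further sequence of stacky blow-ups followed by root-stack removal, a smooth algebraic space $\widetilde X$ together with a proper birational morphism $\widetilde X \to \widetilde{\mathscr X} \to X$. I would need to check that destackification can be run compatibly with the stratification — that the centers chosen by the functorial algorithm either contain or are transverse to the strata, so that the total transform of each $\overline{S}_\alpha$ remains an intersection of geometric snc divisors — which follows because the stacky structure being resolved is supported on the snc boundary already produced, and blowing up (stacky) centers inside an snc divisor preserves the snc property. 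This yields $(\widetilde X, \widetilde S_\alpha) \to (X, S_\alpha)$ with $\widetilde S_\alpha$ a wonderful compactification by an algebraic space, as claimed.

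I expect the main obstacle to be the bookkeeping in the weighted-blow-up step: verifying that the induced stratification on a weighted blow-up stack is again skew-conical — in particular that the product decomposition $D_x \cong D_T \times D_N$ is inherited, that the weights of the extended $\mathbb G_m$-action are still strictly positive, and that the exceptional divisor enters the stratification correctly — and simultaneously confirming that the combinatorial invariant (the number of minimal irreducible strata of codimension $\geq 2$) strictly drops. This is precisely where \cite{macpherson1998making} does its most careful local analysis, and the skewness makes every cone computation a weighted one; none of it is conceptually hard but it is the part where an error could hide, which is why the excerpt (appropriately) says the details are ``left to the reader.''
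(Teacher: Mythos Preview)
Your proposal is correct and follows essentially the same approach as the paper: iterate weighted stacky blow-ups along minimal irreducible strata \`a la \cite{macpherson1998making} to obtain a smooth Artin stack with wonderful stratification, then destackify via \cite{bergh2019functorial}. The paper's proof is terser but identical in substance; the one point it makes slightly more explicit is that the resulting stack is \emph{tame} (stabilizers are finite subgroups of $\mathbb G_m^s$), which is the hypothesis needed to invoke \cite[Theorem B]{bergh2019functorial} --- you have this implicitly in your global-quotient-by-torus description, but it is worth naming.
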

\begin{proof}
    After the discussion above, we have a found an Artin stack $\tilde{\mathscr X}$ with a wonderful stratification $\tilde{S_\alpha}$ and a resolution $\tilde{\mathscr X} \to X$. Note that since $\tilde{X}$ is obtained from taking weighted blowups, the stabilizers are all subgroups of $\mathbb G_m^s$ for some $s$ and hence the Artin stack $\tilde X$ is tame.

    Now, \cite[Theorem B]{bergh2019functorial} can be used to define an algebraic space $\tilde{X}$ obtained as the coarse space of a sequence of stacky modifications of $\tilde{\mathscr X}$ which provides a resolution $\tilde X \to X$ (so that the preimage of the stratification $S_\alpha$ is wonderful).
\end{proof}

This completes the proof of resolutions of the resultant stratification. The final thing left to show is that the strata appearing in the resolution and their twists have polynomial point counts.

\begin{thm}\label{thm: rationality of wild-type densities}
    For every factorization type $\sigma$ of degree $n$, there exists a rational function $r_\sigma(t)$ such that for \emph{every} prime power $q$, we have
        \[\rho(n,\sigma;q) = r_\sigma(q).\]
    Moreover, the rational function $r_\sigma$ satisfies the functional equation
        \[r_\sigma(t^{-1}) = r_\sigma(t).\]
\end{thm}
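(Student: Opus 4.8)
The strategy is to combine Equation \eqref{eqn: factorization densities as integrals}, which expresses $\rho(n,\sigma;q)$ as a $\mathbb Q$-linear combination of the integrals $\eta_{f_{\sigma'}}(K)/|\mathbb P^n(\mathbb F_q)|$ over the poset of factorization types $\sigma' \leq \sigma$, with an explicit analysis of the resolution $f_\sigma: X_\sigma \to \mathbb P^n$ constructed in the previous subsection. Since $\alpha^{-1}(\tau_{\sigma'},\tau_\sigma)$ depends only on $n$ and the combinatorics of the poset (not on $q$), and since sums and products of rational functions satisfying the $t \to t^{-1}$ symmetry again satisfy it, it suffices to prove the theorem for each individual $\eta_{f_\sigma}(K)$: that is, to show that $\eta_{f_\sigma}(K)/|\mathbb P^n(\mathbb F_q)|$ equals a rational function of $q$ that is invariant under $q \to q^{-1}$, uniformly over \emph{all} prime powers $q$.

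First I would apply Theorem \ref{thm: symmetry for pullback} (more precisely its twisted version, Theorem \ref{thm: twist symmetry under pullback}, since $f_\sigma$ is built from a $g_\sigma$-twist of a quotient of $(\mathbb P^1)^n$) to the resolution $X_\sigma$, whose ramification locus over $\mathbb P^n$ is, by the construction in \S\ref{sec: resolving resultant locus}, a geometric simple normal crossings divisor over $\Spec\mathbb Z$ — this is exactly the "Tate-type resolution" whose existence was the point of that section. This expresses $\eta_{f_\sigma}(L)$ as a sum over strata $D_\cL$ of terms $\prod_i \eta_{e_i}^{|\Lambda_i|}(m;q^{k_i/\ell_i}) \cdot \rho_{D_\cL,g_\sigma}(m)$ weighted by the indicator functions $\cI(\gcd(m,k_i)=\ell_i)$. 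By Theorem \ref{thm: twist symmetry under pullback} this sum is already a palindromic form of weight $\dim \mathbb P^n = n$; dividing by $|\mathbb P^n(\mathbb F_q)|$ (a palindromic form of weight $n$, and in fact the rational function $(q^{n+1}-1)/(q-1)$) yields a palindromic form of weight $0$. The remaining content of the theorem — beyond what Theorem \ref{thm: main theorem for almost all primes} already gives — is the \emph{rationality}: that this palindromic form is actually a single rational function of $q$, valid for every $q$ with no arithmetic-progression dependence, and with no "wild prime" exceptions.

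The key point, and the main obstacle, is showing that the strata $D_\cL$ appearing in the resolution of the resultant locus — together with their $g_\sigma$-twists — have \emph{polynomial} point counts $q \mapsto |D_\cL(\mathbb F_q)|$, i.e. are "of Tate type" (their $\ell$-adic cohomology is a sum of Tate twists $\mathbb Q_\ell(-j)$, so that Frobenius acts by powers of $q$). Here I would argue directly from the combinatorial description of the resolution: the strata of $(\mathbb P^1)^n$ indexed by partitions, their quotients by $\prod_i S_{n_i}$, and the successive weighted blow-ups along (closures of) minimal irreducible strata are all built from projective spaces, configuration-type spaces, affine bundles and weighted projective bundles — each of which has cohomology generated by algebraic cycles. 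Crucially, the $g_\sigma$-action permutes blocks of coordinates, so on the Tate classes it acts by permuting basis vectors that are all Frobenius-eigenvectors with eigenvalue a power of $q$; hence $\rho_{D_\cL,g_\sigma}(m)$ is again a $\mathbb Q$-linear combination of terms $q^{jm}\cdot(\lambda + \lambda^{-1})$ with $\lambda$ a root of unity, and the $\gcd$-indicator functions $\cI(\gcd(m,k_i)=\ell_i)$, being $\mathbb Q$-combinations of $m \mapsto \zeta^{am}$ for roots of unity $\zeta$, contribute only further roots of unity. The final step is to observe that when one assembles these pieces as in \eqref{eqn: factorization densities as integrals}, all the genuinely root-of-unity contributions must cancel: the left-hand side $\rho(n,\sigma;q)$ is \emph{a priori} a real number for each $q$, and by \cite{john}, \cite{del_corso_dvornicich_2000} it is a fixed rational function of $q$ on the tame progression $p \nmid \prod e_i$; a palindromic form that agrees with a rational function on an infinite set of $q$ must \emph{be} that rational function, so the root-of-unity terms vanish identically and $r_\sigma(q) = \rho(n,\sigma;q)$ for all $q$. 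Combined with the weight-$0$ palindromicity established above, this gives both $r_\sigma(t^{-1}) = r_\sigma(t)$ and the uniform rationality, completing the proof.
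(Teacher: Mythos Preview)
Your reduction via Equation~\eqref{eqn: factorization densities as integrals} and the appeal to the $\Spec\bbZ$ resolution of \S\ref{sec: resolving resultant locus} match the paper exactly, and your identification of the key point---that the (twisted) strata $D_{\cL}$ have polynomial point counts---is also the paper's key point. However, your final step takes an unnecessary and not fully justified detour. Once you know the cohomology of each $D_{\cL}$ is Tate and that $g_\sigma$ acts by permuting a rational basis of Frobenius eigenvectors, the trace $\tr(g_\sigma\,\frob{q}\mid H^*)$ is already a polynomial in $q$ with \emph{integer} coefficients (the trace of a permutation matrix is an integer, not merely a sum of $\lambda+\lambda^{-1}$ terms); so $|{^{g_\sigma}D_{\cL}}(\bbF_q)|$ is a fixed polynomial in $q$ and there are no root-of-unity contributions to cancel from this source. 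The paper makes this clean by reducing via the Grothendieck-ring identity $[\Bl_Z X]=[X]-[Z]+[Z][\bbP^{\operatorname{codim}-1}]$ to the base strata of $(\bbP^1)^n/H_\sigma$, which are quotients of $(\bbP^1)^r$; Frobenius acts by the scalar $q^i$ on $H^{2i}$, and $g_\sigma$ by a fixed permutation, so the twisted trace is $c_iq^i$ with $c_i$ independent of $q$. Plugging this into the formula of Theorem~\ref{thm: symmetry for pullback} (with $L=K$, so the residue-degree parameter is $m=1$ and the indicator functions $\cI(\gcd(1,k_i)=\ell_i)$ collapse to $\ell_i=1$) gives rationality in $q$ directly.

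Your alternative route---bootstrapping from the tame-case rationality of \cite{john}, \cite{del_corso_dvornicich_2000} by arguing that a palindromic form agreeing with a rational function on infinitely many $q$ must be that rational function---can be made to work (and is close in spirit to the paper's closing remark invoking Hodge--Tate comparison), but as stated it has a gap: palindromic forms are functions of $m$ for a \emph{fixed} base prime $p$, so agreement on the tame progression $\{p:p>n\}$ gives only one value ($m=1$) per $p$, which constrains nothing. To make your argument go through you would need to fix a single tame $p$, use agreement for all $m$, invoke orthogonality of the characters $m\mapsto\zeta^m$ to kill the root-of-unity coefficients, and then argue that the resulting identity of rational functions is independent of $p$ because the resolution lives over $\Spec\bbZ$. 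That last uniformity-in-$p$ step is exactly the polynomial-point-count statement, so the detour buys nothing.
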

\begin{proof}
    By Equation \ref{eqn: factorization densities as integrals}, it suffices to show that the integrals $\eta_{f,\sigma}(q)$ form a rational function as $q$ ranges over prime powers for all factorization types $\sigma$. As in \S \ref{sec: relating factorization densities to integrals}, let $\tau_\sigma$ be the admissible type associated to the factorization type $\sigma$ and consider the the twist quotient map
        \[ f_\sigma: {^{g_\sigma}(\mathbb P^1)^n} / H_\sigma. \]
    As discussed in this section so far, we can find a resolution $X \to (\mathbb P^1)^n/H_\sigma$ by iterated stacky blowups and modifications. In fact, one can check that this resolution sequence is equivariant for the $g_\sigma$ action on $(\mathbb P^1)^n/H_\sigma$ so that we can equivariantly twist the entire construction to obtain a resolution
        \[ \pi_\sigma: X_\sigma \to {^{g_\sigma}(\mathbb P^1)^n} / H_\sigma.\]
    By the proof of Theorem \ref{thm: symmetry for pullback}, it suffices to show that the strata of $X_\sigma$ have polynomial point counts for \emph{all} prime powers $q$ (where we twist \emph{after} base changing to $\mathbb F_q$ as usual). By inclusion-exclusion, we may further reduce to proving that the closed strata have polynomial point counts.
    
    First, we have the well known lemma that if $Z \subset X$ are such that both $Z$ and $X$ have polynomial point counts and $Z,X$ are smooth, then so does the blow-up of $X$ along $Z$. This can be seen in multiple ways but perhaps the easiest is that in the Grothendieck ring of varieties (or stacks), we have the equation
        \[ [Bl_Z(X)] = [X]- [Z] + [Z]\times[\mathbb P^{\dim X-\dim Z-1}].\]

    Thus, the above lemma in combination with the fact that our compactification is obtained from a sequence of blowups shows that it is enough to prove that the closed strata of $^{g_{\sigma}}(\bbP^1)^n/H_{\sigma}$ all have polynomial point counts for all prime powers $q$. 
    
    The proof will be by an induction with respect to the resolution process. By the Grothendieck-Lefschetz trace formula, it suffices to show that the trace of the Frobenius on the cohomologies of the strata are all of the form $q^i$ for $i \in \mathbb N$. The closed strata of $(\mathbb P^1)^n / H_\sigma$ are quotients of the closed strata of $(\mathbb P^1)^n$ which are all of the form $(\mathbb P^1)^r$ for $r \leq n$ and hence the Frobenius acts by a scalar on the appropriate cohomology groups. Since the cohomology of a quotient injects into the cohomology of the cover, this shows that the closed strata of $(\mathbb P^1)^n / H_\sigma$ have polynomial point counts for all prime powers $q$.

    Now we have to show the same for the twist by $g_{\sigma}$. The action of the Frobenius on the twist $^{g_\sigma}[(\mathbb P^1)^n] / H_\sigma$ is equivalent to the action of $g_\sigma$ times the Frobenius on $[(\mathbb P^1)^n] / H_\sigma$. Since the Frobenius acts on the cohomology by a scalar and $g_\sigma$ by a permutation matrix defined independently of $q$, the trace of the Frobenius on the cohomology of the twist is of the form $c_iq^i$ for $c_i,i$ independent of $q$. This proves that the strata of the twist also have polynomial point counts for all $q$.

\end{proof}

\begin{remark}
    We can alternatively prove the above theorem by leveraging the already known tame case: Since our main theorems express the factorization densities as a rational function of certain motives which are smooth and proper over $\operatorname{Spec}\mathbb Z$ (due to the existence of an integral resolution as above), it suffices to prove that the point counts of this rational function in the motives is polynomial. We can complete the proof at this point by using a comparison theorem to Hodge theory in order to show that the motives are of Hodge-Tate type, exactly as in \cite[\S Appendix]{hausel2008mixed}. 
\end{remark}

\bibliography{reference}

\end{document}